\documentclass[12pt]{article}
\usepackage{graphicx}
\usepackage{amsmath}
\usepackage{amssymb}
\usepackage{theorem}
\usepackage{pgfplots}
\usepackage{enumitem}

\sloppy

\pagestyle{plain}

\let\originalleft\left
\let\originalright\right
\renewcommand{\left}{\mathopen{}\mathclose\bgroup\originalleft}
\renewcommand{\right}{\aftergroup\egroup\originalright}

\numberwithin{equation}{section}

\textheight=8.5in

\textwidth=6.0in


\addtolength{\oddsidemargin}{-.25in}

\newtheorem{thm}{Theorem}[section]
\newtheorem{lemma}[thm]{Lemma}
\newtheorem{prop}[thm]{Proposition}
\newtheorem{cor}[thm]{Corollary}
{\theorembodyfont{\rmfamily}

\newtheorem{rmk}[thm]{Remark}
}

\newcommand{\qed}{\hfill \mbox{\raggedright \rule{.07in}{.1in}}}

\newenvironment{proof}{\vspace{1ex}\noindent{\bf Proof}\hspace{0.5em}}{\hfill\qed\vspace{1ex}}

\newenvironment{pfof}[1]{\vspace{1ex}\noindent{\bf Proof of #1}\hspace{0.5em}}{\hfill\qed\vspace{1ex}}

\newcommand{\R}{{\mathbb R}}

\newcommand{\Z}{{\mathbb Z}}

\newcommand{\N}{{\mathbb N}}

\newcommand{\bP}{\mathbb{P}}
\newcommand{\bE}{\mathbb{E}}

\newcommand{\cA}{\mathcal{A}}
\newcommand{\cB}{\mathcal{B}}
\newcommand{\cM}{\mathcal{M}}

\newcommand{\eps}{{\epsilon}}
\newcommand{\diam}{\operatorname{diam}}
\newcommand{\SMALL}{\textstyle}
\newcommand{\BIG}{\displaystyle}

\title{Explicit Coupling Argument for Nonuniformly Hyperbolic Transformations}

\author{A. Korepanov \hspace{2em}  
Z. Kosloff\footnote{Permanent address: 
Einstein Institute of Mathematics,
Hebrew University of Jerusalem, 
Givat Ram. Jerusalem, 9190401, Israel}
\hspace{2em} I. Melbourne \\[.75ex]
 {\small Mathematics Institute, University of Warwick, Coventry, CV4 7AL, UK}}

\date{26 September 2016
(updated 12 June 2018)}

\begin{document}

 \maketitle

\begin{abstract}
The transfer operator corresponding to a uniformly expanding map enjoys good spectral properties.  Here it is verified that coupling yields explicit estimates that depend continuously on the expansion and distortion constants of the map.

For nonuniformly expanding maps with a uniformly expanding induced map, we obtain explicit estimates for mixing rates (exponential, stretched exponential, polynomial) that again depend continuously on the constants for the induced map together with data associated to the inducing time.

Finally, for nonuniformly hyperbolic transformations, we obtain the corresponding estimates for rates of decay of correlations.
\end{abstract}

\section{Introduction}

It is well-known that the transfer operator associated to a uniformly expanding map enjoys good spectral properties.  
In particular, there are numerous methods for proving exponential decay of correlations for uniformly expanding maps, see for example~\cite{Aaronson,Bowen75,ParryPollicott90,Ruelle78,Sinai72}.

Often, statistical properties of nonuniformly expanding systems are studied by inducing to a uniformly expanding one.  
Young~\cite{Young98,Young99} obtained results on decay of correlations for large classes of such nonuniformly expanding maps, as well as nonuniformly hyperbolic transformations.   The rate of decay is related to the tails of the inducing time, with special emphasis placed on exponential tails and polynomial tails.
Stretched exponential decay rates (amongst others) were obtained in
Maume-Deschamps~\cite{Maume01}.
The resulting decay rates have the form $O(e^{-cn^\gamma})$ or $O(n^{-\beta})$
where $\gamma\in(0,1]$ and $\beta>0$ are given explicitly, but the 
implied constants are not and nor is $c$ in the exponential case $\gamma=1$.
An improved estimate of Gou\"ezel~\cite{GouezelPhD} gives sharp decay rates in the stretched exponential case $\gamma\in(0,1)$ but the implied constant remains nonexplicit (as does the constant $c$ in the exponential case).

In this paper, we use an explicit coupling argument to obtain mixing rates with uniform control on the various constants.  The main novelty in our results lies in the nonuniformly expanding/hyperbolic setting.  However, even for uniformly expanding maps, we expect that our results have numerous applications, see for example~\cite{KKMsub1,KKMsub2}.

Related results using the coupling method for uniformly expanding maps can be found in both simpler and more complicated situations (usually in low dimensions) in recent papers, for example~\cite{Eslamiapp,Sulku}.  See also~\cite{Liverani01} for an approach using Birkhoff cones for one-dimensional maps.  None of these results are formulated in such a way that they can be cited in~\cite{KKMsub1,KKMsub2}.  In this paper, we work in a general metric space and present a much shorter and more elementary proof than was previously written down.  
The results then feed into the more complicated argument required in
the nonuniformly expanding/hyperbolic setting.

\begin{rmk}
After circulating a first version of this paper, we were made aware
by Oliver Butterley and Jean-Ren\'e Chazottes of
previous work of Zweim\"uller~\cite{Zweimuller04} which handles the uniformly expanding case.
Using a coupling argument for uniform expanding Markov maps
defined on a general compact metric space,~\cite{Zweimuller04} shows how to obtain
exponential decay of correlations with explicit control on the various
constants, just as is shown in this paper.
Moreover, the setting in~\cite{Zweimuller04} (within the uniformly expanding setting) is
more general than the one considered here since we assume full branches whereas~\cite{Zweimuller04} assumes a ``finite images'' condition.
Assuming full branches simplifies matters considerably but suffices for
our purposes in~\cite{KKMsub1,KKMsub2}.

The compactness assumption in~\cite{Zweimuller04} is used only to
to prove existence of an invariant density via an Arzel\`a-Ascoli argument.
The proof below of Proposition~\ref{prop-inv} shows how to bypass this, so that
compactness of the metric space is not required.
For an alternative argument to prove existence of an invariant density without using compactness, see~\cite{ADU93} or~\cite[Lemma 4.4.1]{Aaronson}.

Hence our results for uniformly expanding maps in Section~\ref{subsec-UE} are not new.
We include the results for a number of reasons: (a) completeness, especially as they feed into our results for nonuniformly expanding/hyperbolic systems (Sections~\ref{subsec-NUE} and~\ref{subsec-NUH}) which are new; (b) The arguments are very short and direct; (c) The explicit nature of the constants is stated in a way that is convenient for easy reference (in [32] it is necessary to read the entire proof to see that it gives explicit uniform bounds for the constants).
\end{rmk}


\begin{rmk} Keller \& Liverani~\cite{KellerLiverani99} considered continuous families of  uniformly expanding maps and developed a perturbative theory that gives uniform estimates on the spectra of the associated transfer operators.  
This idea was used by~\cite{DemersZhang13} in the situation of dispersing billiards.
However, inducing from continuous families of nonuniformly expanding maps to families of uniformly expanding maps may fail to preserve any useful notion of continuous dependence.
In particular, the examples in~\cite[Section~5]{KKMsub1} and in~\cite{KKMsub2} do not satisfy the hypotheses of~\cite{DemersZhang13,KellerLiverani99}.

In this paper, we do not assume any continuous dependence on parameters.
Instead, we work with a fixed uniformly expanding map $F$, and give explicit estimates on the associated transfer operator that depend continuously 
on the expansion and distortion estimates of $F$.
\end{rmk}

Even for nonuniformly expanding/hyperbolic dynamical systems, none of the results in this paper are particularly surprising.  Nevertheless, the results go far beyond those previously available.
Some examples are listed at the end of Section~\ref{subsec-NUE}.
In the case of smooth unimodal maps there are previous results~\cite[Theorem~1.3]{BaladiBenedicksSchnellmann15} showing exponential decay of correlations up to a finite period with uniform exponent (uniformity of the implied constant is not claimed in~\cite{BaladiBenedicksSchnellmann15}). Here we obtain a similar result with uniform exponent and uniform implied constant.
In the case of families of Viana maps~\cite{Viana97} which are known to have stretched exponential decay of correlations~\cite{Gouezel06}, we obtain for the first time 
uniform estimates on the constants $C,\,c,\,\gamma$ in the
stretched exponential decay rate $Ce^{-cn^\gamma}$.

Our main results are stated in Section~\ref{sec-statement} and proved 
for uniformly expanding, nonuniformly expanding, and nonuniformly hyperbolic, transformations in
Sections~\ref{sec-UE},~\ref{sec-NUE} and~\ref{sec-NUH} respectively.

\section{Statement of the main results}
\label{sec-statement}

In this section, we state our main results for
uniformly expanding maps (Subsection~\ref{subsec-UE}),
nonuniformly expanding maps (Subsection~\ref{subsec-NUE}),
and nonuniformly hyperbolic transformations (Subsection~\ref{subsec-NUH}).

\subsection{Uniformly expanding maps}
\label{subsec-UE}

Let $(Y,m)$ be a probability space, and 
$F:Y \to Y$ be a nonsingular transformation.
Let $d$ be a metric on $Y$ such that $\diam Y\leq 1$.

Suppose that $\alpha$ is an at most countable measurable partition of $Y$, and
that $F$ restricts to a measure-theoretic bijection from $a$ onto $Y$ for each $a \in \alpha$.

Let $\zeta=\frac{dm}{dm\circ F}$ be the inverse Jacobian of $F$ with respect 
to $m$.
Assume that there are constants $\lambda>1$, $K>0$ and $\eta\in(0,1]$ such that
for $x,y$ in the same partition element
\begin{align}
  d(Fx,Fy) \geq \lambda d(x,y)
  \quad \text{and} \qquad
  |\log \zeta(x) - \log \zeta(y)|  \leq K  d(Fx,Fy)^\eta. \label{eq-zeta-dist}
\end{align}

Let $P_m:L^1(Y)\to L^1(Y)$ be the transfer operator corresponding
to $F$ and $m$, so 
$\int_Y P_m\phi\,\psi\,dm=\int_Y\phi\,\psi\circ F\,dm$
for all $\phi\in L^1$ and $\psi\in L^\infty$.
Then $P_m \phi$ is given explicitly by
\[
  (P_m \phi)(y) = \sum_{a \in \alpha} \zeta(y_a) \phi(y_a),
\]
where $y_a$ is the unique preimage of $y$ under $F$ lying in $a$.

Given $\phi:Y\to\R$, define 
\[
  |\phi|_\eta = \sup_{x\neq y} \frac{|\phi(x)-\phi(y)|}{d(x,y)^\eta}
  \qquad \text{and} \qquad \|\phi\|_\eta = |\phi|_\infty + |\phi|_\eta.
\]
Let $C^\eta$ denote the Banach space of observables $\phi:Y\to\R$
such that $\|\phi\|_\eta<\infty$.

It is well-known that there exist constants
$C>0$, $\gamma\in(0,1)$, such that
$\|P_m^n\phi\|_\eta \le C\gamma^n\|\phi\|_\eta$
for all $\phi\in C^\eta$ with $\int_Y\phi\,dm=0$ and all $n\ge1$.
Our main result is:

\begin{thm} \label{thm-UE}
There exist constants $C>0$, $\gamma\in(0,1)$ depending continuously
on $\lambda$, $K$ and $\eta$, such that
\[
\|P_m^n\phi\|_\eta \le C\gamma^n|\phi|_\eta,
\]
for all $\phi\in C^\eta$ with $\int_Y\phi\,dm=0$, and all $n\ge1$.
\end{thm}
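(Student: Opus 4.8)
The plan is to run the coupling argument inside the one‑parameter family of multiplicative (log‑Hölder) cones
\[
\mathcal{C}_b=\Bigl\{\phi:Y\to(0,\infty)\ :\ \phi(x)\le e^{b\,d(x,y)^\eta}\phi(y)\ \text{ for all }x,y\in Y\Bigr\},\qquad b>0,
\]
and to reduce the theorem to a contraction estimate for differences of normalised densities lying in one fixed such cone. Two elementary consequences of \eqref{eq-zeta-dist} and $\diam Y\le1$ drive everything. First, writing $(P_m\phi)(x)=\sum_a\zeta(x_a)\phi(x_a)$ and comparing the summands via the distortion bound gives the cone‑mapping property $P_m\mathcal{C}_b\subseteq\mathcal{C}_{K+b\lambda^{-\eta}}$. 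Since $b\mapsto K+b\lambda^{-\eta}$ is an affine contraction with fixed point $b^\ast:=K/(1-\lambda^{-\eta})$, iterating gives $P_m^{\,p}\mathcal{C}_b\subseteq\mathcal{C}_{\,b^\ast+(b-b^\ast)\lambda^{-\eta p}}$; in particular $\mathcal{C}_b$ is $P_m$‑invariant for every $b\ge b^\ast$, and for arbitrary $b$ one has $P_m^{\,p}\mathcal{C}_b\subseteq\mathcal{C}_{2b^\ast}$ once $\lambda^{\eta p}\ge(b-b^\ast)^+/b^\ast$. Second, $1\in\mathcal{C}_0$ and $\int_YP_m^{\,n}1\,dm=1$, so $P_m^{\,n}1\in\mathcal{C}_{b^\ast}$ is a normalised density; more generally every normalised $\phi\in\mathcal{C}_b$ satisfies $e^{-b}\le\phi\le e^b$ and $|\phi|_\eta\le be^{2b}$, hence $\|\phi\|_\eta\le B(b):=e^b+be^{2b}$, and combining the $L^\infty$ bound with the distortion estimate shows $P_m$ acts boundedly on $C^\eta$, say $\|P_m\phi\|_\eta\le M\|\phi\|_\eta$ with $M=M(\lambda,K,\eta)\ge1$.

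The core is a self‑reproducing coupling block. Given normalised $\phi_1,\phi_2\in\mathcal{C}_{2b^\ast}$, set $\epsilon_0:=\tfrac12e^{-2b^\ast}$ and split $\phi_i=\epsilon_0\cdot 1+(1-\epsilon_0)\psi_i$ with $\psi_i:=(\phi_i-\epsilon_0)/(1-\epsilon_0)$. Using $\phi_i\ge e^{-2b^\ast}$ and $|\phi_i|_\eta\le 2b^\ast e^{4b^\ast}$ one checks that each $\psi_i$ is again a normalised density and that $\psi_i\in\mathcal{C}_{b'}$ for an explicit $b'=b'(\lambda,K,\eta)>b^\ast$. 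Take $p=p(\lambda,K,\eta)$ minimal with $\lambda^{\eta p}\ge(b'-b^\ast)/b^\ast$; applying $P_m^{\,p}$ to the splitting yields
\[
P_m^{\,p}\phi_i=\epsilon_0\,g+(1-\epsilon_0)\,q_i,\qquad g:=P_m^{\,p}1\in\mathcal{C}_{b^\ast},\quad q_i:=P_m^{\,p}\psi_i\in\mathcal{C}_{2b^\ast},
\]
all normalised. Hence $P_m^{\,p}(\phi_1-\phi_2)=(1-\epsilon_0)(q_1-q_2)$ with $q_1,q_2$ back inside $\mathcal{C}_{2b^\ast}$, so the block applies again; after $k$ blocks, $P_m^{\,kp}(\phi_1-\phi_2)=(1-\epsilon_0)^k(q_1^{(k)}-q_2^{(k)})$ with $q_i^{(k)}$ normalised in $\mathcal{C}_{2b^\ast}$, whence $\|P_m^{\,kp}(\phi_1-\phi_2)\|_\eta\le 2B(2b^\ast)(1-\epsilon_0)^k$. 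Writing a general $n$ as $kp+r$ with $0\le r<p$ and using $\|P_m^{\,r}\,\cdot\,\|_\eta\le M^r\|\cdot\|_\eta$ gives
\[
\|P_m^{\,n}(\phi_1-\phi_2)\|_\eta\le C_1\,\gamma^{\,n},\qquad \gamma:=(1-\epsilon_0)^{1/p}\in(0,1),\quad C_1:=\frac{2B(2b^\ast)\,M^{\,p}}{1-\epsilon_0}.
\]

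To finish, let $\phi\in C^\eta$ with $\int_Y\phi\,dm=0$; writing $\phi(x)=\int_Y(\phi(x)-\phi(y))\,dm(y)$ and using $\diam Y\le1$ gives $|\phi|_\infty\le|\phi|_\eta$, so by homogeneity we may assume $|\phi|_\eta\le1$. Put $t:=\min\{\tfrac12,b^\ast\}$. Then $1\pm t\phi$ are normalised densities, are $\ge\tfrac12$, and lie in $\mathcal{C}_{2t}\subseteq\mathcal{C}_{2b^\ast}$; applying the previous bound to the pair $1+t\phi,\ 1-t\phi$ together with $P_m^{\,n}\phi=\tfrac1{2t}\bigl(P_m^{\,n}(1+t\phi)-P_m^{\,n}(1-t\phi)\bigr)$ yields $\|P_m^{\,n}\phi\|_\eta\le\frac{C_1}{2t}\gamma^{\,n}|\phi|_\eta$, i.e.\ the assertion with $C:=C_1/(2t)$. (Alternatively, the invariant density $h\in\mathcal{C}_{b^\ast}$ of Proposition~\ref{prop-inv} can serve as reference in place of $1$; in fact applying the coupling estimate to the pair $P_m^{\,k}1,\ 1$ shows $(P_m^{\,n}1)$ is $C^\eta$‑Cauchy and hence produces $h$ with no compactness.) Every quantity above except the integer $p$ is visibly continuous in $(\lambda,K,\eta)$, and $p$ is dominated by the continuous function $\bar p:=2+\bigl(\log^+\!\tfrac{b'-b^\ast}{b^\ast}\bigr)/(\eta\log\lambda)$; since $(1-\epsilon_0)^{1/p}\le(1-\epsilon_0)^{1/\bar p}$ and $M^{\,p}\le M^{\,\bar p}$, replacing $p$ by $\bar p$ in the formulas for $\gamma$ and $C$ gives continuous (and explicit) choices.

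I expect the only real difficulty to be bookkeeping: showing that the coupling block genuinely reproduces itself. The delicate point is that subtracting a fixed multiple of the reference density throws $\psi_i$ out of $\mathcal{C}_{2b^\ast}$ into the much larger cone $\mathcal{C}_{b'}$, so the block must bundle in exactly enough iterates of $P_m$ to contract back into $\mathcal{C}_{2b^\ast}$ before the next subtraction; one then must verify that the extracted overlap fraction $\epsilon_0$ may be kept the same in every block, which is precisely what makes the reduction factors telescope to $(1-\epsilon_0)^k$. Tracking $b'$, $p$, $\epsilon_0$ and $M$ carefully enough to conclude continuous dependence on $(\lambda,K,\eta)$ is the secondary point requiring care.
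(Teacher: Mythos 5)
Your proof is correct and reaches the same conclusion with explicit, continuously parameter-dependent constants, but the coupling block is organized differently from the paper's. Both arguments live in the family of multiplicative cones $\mathcal{C}_b=\{\psi>0:|\log\psi|_\eta\le b\}$ and exploit the same two facts: $P_m\mathcal{C}_b\subseteq\mathcal{C}_{K+\lambda^{-\eta}b}$ (Proposition~\ref{prop-dist}) and the reduction of a mean-zero $\phi$ to a scaled difference of two normalised positive densities. The paper's block, however, is a \emph{single} transfer-operator step followed by subtraction of $\xi\int_Y P_m\psi\,dm$, with $\xi$ tuned (via $R(1-\xi e^R)\ge K+\lambda^{-\eta}R$, using Proposition~\ref{prop-diet}) so that the log-H\"older bound $R$ is reproduced \emph{exactly} each step; this yields $\gamma=1-\xi$ and eliminates any auxiliary integer $p$ or operator norm $M$. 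You instead subtract the overlap $\epsilon_0$ first, accept that the remainder leaves $\mathcal{C}_{2b^\ast}$ and lands in the larger cone $\mathcal{C}_{b'}$, and then run $p$ iterates of $P_m$ to contract back before subtracting again, paying with the bookkeeping of $p$, the bound $M$ on $\|P_m\|_{C^\eta\to C^\eta}$, and the leftover-iterates estimate for $n$ not a multiple of $p$. The upshot is that the paper's tuning of $\xi$ gives a shorter argument with cleaner constants, while your version is more modular: the overlap-extraction and the cone contraction are decoupled, so the argument would survive with only cosmetic changes even in a setting where the per-step subtraction cannot be tuned to keep the cone radius invariant. Your handling of the continuity-in-parameters issue (replacing the integer $p$ by the continuous majorant $\bar p$) and of the nonempty-interior step (via $t=\min\{\tfrac12,b^\ast\}$) is sound.
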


\begin{rmk} \label{rmk-main}
For example, take
$R = 2K/(1-\lambda^{-\eta})$ and $\xi = \frac12 e^{-R}(1-\lambda^{-\eta})$.
Then Theorem~\ref{thm-UE} holds with $C=4 e^R (1+R)$ and $\gamma=1-\xi$.
\end{rmk}

Next, let $\cM$ be the collection of probability measures on $Y$ that
are equivalent to $m$ and satisfy
$L_\mu<\infty$ where
$L_\mu=|\log\frac{d\mu}{dm}|_\eta$.
Given $\mu\in\cM$, 
define $\zeta_\mu=\frac{d\mu}{d\mu\circ F}$ and let $P_\mu$ be the corresponding transfer operator.

\begin{prop} \label{prop-mu}
For all $x,y$ in the same partition element,
\[
|\log \zeta_\mu(x)-\log\zeta_\mu(y)|\le K_\mu d(Fx,Fy)^\eta,
\]
where $K_\mu=K+(\lambda^{-\eta}+1)L_\mu$.  
\end{prop}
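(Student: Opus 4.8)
The plan is to reduce everything to the elementary chain rule relating $\zeta_\mu$ to $\zeta$ and to the density $h=\frac{d\mu}{dm}$, and then to use the uniform expansion already built into~\eqref{eq-zeta-dist}. Since $\mu$ and $m$ are equivalent with $d\mu=h\,dm$ and $h>0$, the change-of-variables formula on each (full) branch gives that the Jacobian of $F$ with respect to $\mu$ is $J_\mu F(x)=\frac{h(Fx)}{h(x)}J_m F(x)$ for $x$ in a partition element. Since $\zeta_\mu=1/J_\mu F$ and $\zeta=1/J_m F$, this yields the identity
\[
\zeta_\mu(x)=\zeta(x)\,\frac{h(x)}{h(Fx)},
\]
and hence, taking logarithms,
\[
\log\zeta_\mu(x)-\log\zeta_\mu(y)=\bigl(\log\zeta(x)-\log\zeta(y)\bigr)+\bigl(\log h(x)-\log h(y)\bigr)-\bigl(\log h(Fx)-\log h(Fy)\bigr).
\]

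Next I would estimate the three differences on the right-hand side separately, for $x,y$ lying in a common partition element. The first is at most $K\,d(Fx,Fy)^\eta$ directly from~\eqref{eq-zeta-dist}. The third is at most $L_\mu\,d(Fx,Fy)^\eta$ straight from the definition $L_\mu=|\log h|_\eta$. For the middle term, I would invoke the expansion bound $d(Fx,Fy)\ge\lambda\,d(x,y)$, i.e.\ $d(x,y)\le\lambda^{-1}d(Fx,Fy)$, to get
\[
|\log h(x)-\log h(y)|\le L_\mu\,d(x,y)^\eta\le \lambda^{-\eta}L_\mu\,d(Fx,Fy)^\eta.
\]
Adding the three bounds gives $|\log\zeta_\mu(x)-\log\zeta_\mu(y)|\le\bigl(K+\lambda^{-\eta}L_\mu+L_\mu\bigr)d(Fx,Fy)^\eta$, which is exactly the claimed bound with $K_\mu=K+(\lambda^{-\eta}+1)L_\mu$.

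The argument is essentially all bookkeeping; the only step that needs genuine care is the first one, namely pinning down the chain-rule identity for $\zeta_\mu$ with the conventions in force (in particular the precise meaning of $\zeta_\mu=\frac{d\mu}{d\mu\circ F}$ restricted to each full branch), since a slip there would produce the wrong power of $\lambda$ or the wrong coefficient in $K_\mu$. Once that identity is correctly established, there is no substantive obstacle, and no appeal to compactness or to any of the coupling machinery is needed.
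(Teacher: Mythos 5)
Your proof is correct and is essentially the paper's own argument: the key identity $\log\zeta_\mu = \log\zeta + \log h - \log h\circ F$ (with $h=d\mu/dm$) is the same, and the term-by-term estimate using the expansion $d(x,y)\le\lambda^{-1}d(Fx,Fy)$ matches exactly. The only difference is a trivial notational choice (the paper writes $h$ for $\log\frac{d\mu}{dm}$ rather than for the density itself).
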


\begin{proof}
Note that $\log \zeta_\mu=\log \zeta+h-h\circ F$ where $h=\log\frac{d\mu}{dm}$.
Hence $|\log \zeta_\mu(x)-\log \zeta_\mu(y)|\le |\log \zeta(x)-\log \zeta(y)|+|h|_\eta d(x,y)^\eta+
|h|_\eta d(Fx,Fy)^\eta\le (K+L_\mu \lambda^{-\eta}+L_\mu)d(Fx,Fy)^\eta$.
\end{proof}

In other words, the hypotheses of Theorem~\ref{thm-UE} are satisfied with $m$ and $K$ replaced by $\mu$ and $K_\mu$.
Hence, we obtain:

\begin{cor} \label{cor-main}
Let $\mu\in\cM$.
There exist constants $C>0$, $\gamma\in(0,1)$ depending continuously
on $\lambda$, $K_\mu$ and $\eta$, such that
\[
\|P_\mu^n\phi\|_\eta \le C\gamma^n|\phi|_\eta,
\]
for all $\phi\in C^\eta$ with $\int_Y\phi\,d\mu=0$, and all $n\ge1$. \qed
\end{cor}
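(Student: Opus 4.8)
The plan is to obtain Corollary~\ref{cor-main} as an immediate application of Theorem~\ref{thm-UE}, with the probability space $(Y,m)$ replaced by $(Y,\mu)$ while keeping the same transformation $F$, the same metric $d$, and the same partition $\alpha$. The first task is therefore to verify that $(Y,\mu,F,d,\alpha)$ satisfies the standing hypotheses of Subsection~\ref{subsec-UE}. Since $\mu$ is a probability measure, $(Y,\mu)$ is a probability space; since $\mu$ is equivalent to $m$, the two measures have the same null sets, so $F$ is still nonsingular with respect to $\mu$, and the requirement that $F$ restrict to a measure-theoretic bijection from each $a\in\alpha$ onto $Y$ --- being a statement modulo null sets --- is unaffected. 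The metric $d$ and the bound $\diam Y\le1$ involve no measure and are likewise unchanged.

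Next I would identify the data appearing in~\eqref{eq-zeta-dist} for the new triple. The inverse Jacobian of $F$ with respect to $\mu$ is precisely $\zeta_\mu=\frac{d\mu}{d\mu\circ F}$, and $P_\mu$ is the associated transfer operator, so $\zeta_\mu$ and $P_\mu$ play the roles of $\zeta$ and $P_m$. The expansion inequality $d(Fx,Fy)\ge\lambda d(x,y)$ does not refer to the measure, hence it continues to hold with the same $\lambda$. By Proposition~\ref{prop-mu}, the distortion inequality in~\eqref{eq-zeta-dist} holds with $K$ replaced by $K_\mu=K+(\lambda^{-\eta}+1)L_\mu$, with $\eta$ unchanged. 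Thus all hypotheses of Theorem~\ref{thm-UE} hold for the data $(\lambda,K_\mu,\eta)$.

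Applying Theorem~\ref{thm-UE} to this triple then produces constants $C>0$ and $\gamma\in(0,1)$, depending continuously on $\lambda$, $K_\mu$ and $\eta$, with $\|P_\mu^n\phi\|_\eta\le C\gamma^n|\phi|_\eta$ for every $\phi\in C^\eta$ satisfying $\int_Y\phi\,d\mu=0$ and every $n\ge1$; this is exactly the assertion of the corollary, and explicit admissible values are obtained from Remark~\ref{rmk-main} with $K$ replaced by $K_\mu$. There is no substantive obstacle: the only point worth a word of justification is that passing from $m$ to the equivalent measure $\mu$ preserves all the measure-theoretic hypotheses on $F$ and $\alpha$, which holds because equivalent measures have identical null sets.
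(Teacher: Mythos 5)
Your proof is correct and follows exactly the paper's route: the paper derives Corollary~\ref{cor-main} directly from Proposition~\ref{prop-mu} together with Theorem~\ref{thm-UE}, observing that the hypotheses of the theorem hold with $m$, $K$ replaced by $\mu$, $K_\mu$. Your write-up simply makes explicit the routine verification (equivalence of $m$ and $\mu$ preserves the measure-theoretic hypotheses, expansion is measure-independent) that the paper treats as immediate.
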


Of special interest is the case where $\mu$ is the unique 
absolutely continuous $F$-invariant probability measure.
For this special case, we prove:

\begin{prop} \label{prop-inv}  
The invariant probability measure $\mu$ lies in $\cM$, and 
there is a constant $R$ depending continuously on $\lambda$, $K$ and $\eta$
(chosen as in Remark~\ref{rmk-main} say) such that
\[
e^{-R}\le \frac{d\mu}{dm}\le e^R, \qquad
\Bigl|\log\frac{d\mu}{dm}\Bigr|_\eta\le R.
\]
In particular, the constants $C$ and $\gamma$ in Corollary~\ref{cor-main} depend continuously on $\lambda$, $K$ and $\eta$.
\end{prop}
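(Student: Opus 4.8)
The plan is to build the invariant density directly as $h=\lim_n P_m^n\mathbf{1}$, using Theorem~\ref{thm-UE} to force this limit to exist; this is precisely the device (flagged in the remark preceding the proposition) that replaces the usual Arzel\`a--Ascoli/compactness argument. Fix $R=2K/(1-\lambda^{-\eta})$ as in Remark~\ref{rmk-main} and let $\cA$ be the set of probability densities $\phi$ (so $\phi\ge 0$ and $\int_Y\phi\,dm=1$) with $|\log\phi|_\eta\le R$. The first observation is that membership in $\cA$ already encodes the two-sided bound: for $\phi\in\cA$ one has $|\log\phi(x)-\log\phi(y)|\le R(\diam Y)^\eta\le R$ for all $x,y$, while $\int_Y\phi\,dm=1$ forces $\inf\phi\le 1\le\sup\phi$; hence $e^{-R}\le\phi\le e^R$. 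Note $\mathbf{1}\in\cA$, and $\cA$ is a bounded subset of $C^\eta$: from $\phi\le e^R$ and $|\log\phi|_\eta\le R$ one gets $|\phi(x)-\phi(y)|=\phi(y)\,|e^{\log\phi(x)-\log\phi(y)}-1|\le e^R(e^{Rd(x,y)^\eta}-1)\le Re^{2R}d(x,y)^\eta$, so $|\phi|_\eta\le Re^{2R}$.

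The one genuinely computational step is the invariance $P_m\cA\subseteq\cA$. Writing $(P_m\phi)(y)=\sum_{a\in\alpha}e^{g_a(y)}$ with $g_a(y)=\log\zeta(y_a)+\log\phi(y_a)$, and using $d(x_a,y_a)\le\lambda^{-1}d(Fx_a,Fy_a)=\lambda^{-1}d(x,y)$ together with~\eqref{eq-zeta-dist} and $|\log\phi|_\eta\le R$, one finds
\[
  |g_a(x)-g_a(y)|\le\bigl(K+R\lambda^{-\eta}\bigr)d(x,y)^\eta .
\]
Then the elementary inequality $\sum_a e^{g_a(x)}\le\bigl(\max_a e^{g_a(x)-g_a(y)}\bigr)\sum_a e^{g_a(y)}$ yields $|\log P_m\phi|_\eta\le K+R\lambda^{-\eta}\le R$, the last inequality being exactly why $R$ is taken at least $K/(1-\lambda^{-\eta})$. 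Since also $P_m\phi\ge 0$ and $\int_Y P_m\phi\,dm=\int_Y\phi\,dm=1$, indeed $P_m\phi\in\cA$.

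Next set $h_n=P_m^n\mathbf{1}\in\cA$. For $n,k\ge 1$ we have $h_{n+k}-h_n=P_m^n(h_k-\mathbf{1})$ with $\int_Y(h_k-\mathbf{1})\,dm=0$, so Theorem~\ref{thm-UE} gives $\|h_{n+k}-h_n\|_\eta\le C\gamma^n|h_k-\mathbf{1}|_\eta=C\gamma^n|h_k|_\eta\le C\gamma^n Re^{2R}$, using the uniform seminorm bound on $\cA$. Hence $(h_n)$ is Cauchy in $C^\eta$; let $h$ be its limit. Uniform convergence preserves the lower bound $h\ge e^{-R}$, so $\log h_n\to\log h$ uniformly (uniform continuity of $\log$ on $[e^{-R},e^R]$) and lower semicontinuity of $|\cdot|_\eta$ under pointwise limits gives $|\log h|_\eta\le R$; also $\int_Y h\,dm=1$. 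Thus $h\in\cA$. Finally $P_m$ is an $L^1$-contraction, so $h_n\to h$ in $L^1$ and $h_{n+1}=P_m h_n$ yield $P_m h=h$; hence $\mu:=h\,m$ is an $F$-invariant probability measure equivalent to $m$, with $e^{-R}\le d\mu/dm\le e^R$ and $L_\mu=|\log h|_\eta\le R$, so $\mu\in\cM$. (Uniqueness within $\cM$ is immediate from Theorem~\ref{thm-UE} applied to the difference of two invariant densities; uniqueness among all absolutely continuous invariant probability measures is classical.) For the last assertion, Proposition~\ref{prop-mu} with $L_\mu\le R$ gives $K_\mu\le K+(\lambda^{-\eta}+1)R$, a quantity depending continuously on $\lambda,K,\eta$; since larger distortion constants only weaken the estimate of Remark~\ref{rmk-main}, one may feed this bound into Corollary~\ref{cor-main} to get $C,\gamma$ (valid, if not optimal) depending continuously on $\lambda,K,\eta$.

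I expect the only delicate point to be the cone invariance $P_m\cA\subseteq\cA$: the distortion exponent must close up, which is what pins down the size of $R$ and explains the factor $2$ in Remark~\ref{rmk-main}. The remaining subtlety, minor once Theorem~\ref{thm-UE} is available, is that $\cA$ is ``closed enough'' for the $C^\eta$-limit $h$ to stay in $\cA$; here the uniform lower bound $e^{-R}$ is the load-bearing ingredient, as it is what permits passing to $\log h$ in the limit.
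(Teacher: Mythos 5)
Your proof is correct and follows essentially the same route as the paper: construct $\rho=\lim_{n\to\infty}P_m^n 1$, using the cone invariance $|\cdot|_{\eta,\ell}\le R$ (your step $P_m\cA\subseteq\cA$ is exactly Proposition~\ref{prop-R}, which you re-derive rather than cite) together with the exponential convergence from Theorem~\ref{thm-UE}, and then pass the bounds to the limit. The only cosmetic differences are your Cauchy-sequence packaging versus the paper's telescoping sum $1+\sum_n P_m^n(P_m 1-1)$, and your slightly weaker H\"older-seminorm bound $Re^{2R}$ in place of the paper's $e^R R$ obtained via inequality~\eqref{eq-ineq}.
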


\begin{rmk}
A standard extension of these results is to treat observables $\phi:Y\to\R$ 
that 
are piecewise H\"older (relative to the partition $\alpha$) and possibly unbounded.  Provided $P_m\phi\in C^\alpha$, our results go through unchanged (with obvious modifications to the constant $C$).
For instances of this extension, we refer to~\cite[Lemma~2.2]{MN05} 
or~\cite[Proposition~4.7]{KKMsub1}.
\end{rmk}

\subsection{Nonuniformly expanding maps}
\label{subsec-NUE}

Let $F:Y\to Y$ be a uniformly expanding map with probability measure $m$ (not necessarily invariant), constants $\lambda$, $K$ and $\eta$, and partition $\alpha$, as in Subsection~\ref{subsec-UE}.
Let \(\tau : Y \to \Z^+\) be an integrable function that is constant on
partition elements. 
Define the Young tower~\cite{Young99}
\[
  \Delta = \{ (y,\ell) \in Y \times \Z : 0 \leq \ell \le  \tau(y)-1 \}
\]
and \(f : \Delta \to \Delta\),
\[
  f(y,\ell) = \begin{cases} (y, \ell+1), & \ell \le  \tau(y)-2, \\ (Fy, 0), & \ell=\tau(y)-1. \end{cases}
\]
Let \(\bar{\tau} = \int_Y \tau \, dm\).
Let \(m_\Delta\) be the probability measure on \(\Delta\) given by
\(
  m_\Delta(A \times \{\ell\}) = \bar{\tau}^{-1} m(A)
\)
for all \(\ell \geq 0\) and measurable $A \subset \{y\in Y:\tau(y)\ge \ell+1\}$.

Let \(d_\Delta\) be the metric on \(\Delta\) given by
\[
  d_\Delta((y,\ell), (y',\ell')) = \begin{cases} 1, &\ell \neq \ell' \\ d(y,y'), & \ell=\ell' \end{cases}.
\]

Given \(\phi : \Delta \to \R\), define
\(|\phi|_\eta = \sup_{x,y \in \Delta} \frac{|\phi(x) - \phi(y)|}{d_\Delta(x,y)^\eta}\)
and \(\|\phi\|_\eta = |\phi|_\eta + |\phi|_\infty \).

Let \(L: L^1(\Delta) \to L^1(\Delta)\) denote the 
transfer operator corresponding to \(f\) and \(m_\Delta\),
so $\int_\Delta L\phi\,\psi\,dm_\Delta=\int_\Delta \phi\,\psi\circ f\,d\mu$
for all $\phi\in L^1$, $\psi\in L^\infty$.

When the measure $m$ on $Y$ is $F$-invariant, $m_\Delta$ is an 
ergodic $f$-invariant probability measure on $\Delta$ and $m_\Delta$ is mixing under $f$ if and only 
if $\gcd\{\tau(a):a\in\alpha\}=1$.
Accordingly, we say that the tower $f:\Delta\to\Delta$ is {\em mixing}
if $\gcd\{\tau(a):a\in\alpha\}=1$, and {\em nonmixing} otherwise, even though
we do not assume that $m_\Delta$ is $f$-invariant.

\paragraph{Mixing Young towers} 
In the mixing case, there exist
\(\delta > 0\) and a finite set of positive integers \(\{I_k\}\)
with \(\gcd\{I_k\} = 1\) 
such that \(m ( \{y \in Y : \tau(y) = I_k\}) \geq \delta \).

\begin{thm}
  \label{thm-NUE}
  Let \(\phi : \Delta \to \R\) be an observable with \(\|\phi\|_\eta < \infty\)
  and \(\int_\Delta \phi \, dm_\Delta = 0\).
  \begin{itemize}
    \item Suppose that \(m(\tau \geq n) \leq C_\tau n^{-\beta} \) for some \(\beta > 1\)
      and all \(n > 0\).
      Then there exists a constant \(C>0\) depending continuously on
      \(\lambda\), \(K\), \(\eta\), \(\max \{I_k\}\), \(\delta\), \(\beta\)
      and \(C_\tau\),  such that for all \(n \geq 0\)
      \[
        \int_\Delta |L^n \phi| \, dm_\Delta \leq C \|\phi\|_\eta n^{-(\beta-1)}.
      \]
    \item Suppose that \(m(\tau \geq n) \leq C_\tau e^{- A n^\gamma}\) for some 
      \(A > 0\), \(0 < \gamma \leq 1\) and all \(n > 0\). 
      Then there exist constants \(B > 0\) and \(C > 0\) depending continuously
      on \(\lambda\), \(K\), \(\eta\), \(\max \{I_k\}\), \(\delta\), \(A\), \(\gamma\)
      and \(C_\tau\), such that for all \(n \geq 0\)
      \[
        \int_\Delta |L^n \phi| \, dm_\Delta \leq C \|\phi\|_\eta e^{-B n^\gamma}.
      \]
  \end{itemize}
\end{thm}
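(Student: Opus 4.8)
The plan is to reduce the tower estimate to a coupling argument whose single ``unit of contraction'' is supplied by Theorem~\ref{thm-UE} applied to the base map $F$, while the rate of decay is governed entirely by how often, in tower time $n$, a coupling attempt can be made. Concretely, I would work on $\Delta\times\Delta$ and couple two absolutely continuous probability measures $\mu_1,\mu_2$ on $\Delta$ with regular densities. Using $\int_\Delta\phi\,dm_\Delta=0$ we may write $\int_\Delta|L^n\phi|\,dm_\Delta=a\,\|L^n_*\mu_1-L^n_*\mu_2\|_{TV}$, where $\mu_i$ are the normalized positive and negative parts of $\phi$ and $a=\int_\Delta\phi^+\,dm_\Delta\le|\phi|_\infty$; it then suffices to bound the total variation distance of the two evolving measures, and coupling bounds this by twice the mass that has not yet been matched.

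\textbf{The unit of contraction.} On the base $Y_0=Y\times\{0\}$ the first-return map of $f$ is exactly $F$ with return time $\tau$, so Theorem~\ref{thm-UE} together with Corollary~\ref{cor-main} and Proposition~\ref{prop-inv} (for the invariant density of $F$) provides an \emph{explicit} $\xi>0$ and an integer $N$, depending continuously on $\lambda,K,\eta$, such that from any two admissible densities on $Y_0$ one can subtract a common component of mass $\ge\xi$ after $N$ applications of $P_m$, leaving densities that are still admissible. The necessary preliminary is a ``stays regular'' lemma: the densities produced by $L$ on $\Delta$ — supported on various levels, and spread over the return branches when they reach the base — must keep uniformly bounded (piecewise) $C^\eta$ norm; this follows from \eqref{eq-zeta-dist} exactly as in the proof of Theorem~\ref{thm-UE}. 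The mixing hypothesis enters here too: since $m(\tau=I_k)\ge\delta$ with $\gcd\{I_k\}=1$, a standard renewal fact gives $n_0=n_0(\{I_k\})$ and $c_0=c_0(\delta,\{I_k\})>0$ so that, from any level, both copies are simultaneously at the base within a bounded window with probability bounded below — this is the quantitative replacement for aperiodicity, ensuring that ``coupling attempts'' occur at a definite frequency regardless of where the mass currently sits.

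\textbf{Converting tower time to coupling attempts.} Combining the above, the mass left uncoupled at tower time $n$ is at most $\sum_{k\ge0}(1-\xi)^k\,\bP(\text{fewer than }k\text{ coupling attempts have occurred by time }n)$, and the event in question forces the first $\sim k$ inter-attempt times — each stochastically dominated by $\max\{I_k\}$ times a geometric number of individual inducing times $\tau$ — to sum to more than a definite multiple of $n$. I would estimate this renewal-type probability separately in the two regimes. In the polynomial case $m(\tau\ge n)\le C_\tau n^{-\beta}$, a ``one big jump'' decomposition bounds $\bP(\tau_1+\cdots+\tau_k\ge n)$ by $\mathrm{const}\cdot k\,m(\tau\gtrsim n/k)$ plus a term geometrically small in $k$; feeding this in and summing, the dominant contribution is $\sum_{j\gtrsim n}m(\tau\ge j)/\bar\tau\asymp n^{-(\beta-1)}$, giving the claimed exponent with $C$ built explicitly from $\xi,N,n_0,c_0,\beta,C_\tau,\bar\tau$. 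For $m(\tau\ge n)\le C_\tau e^{-An^\gamma}$ with $\gamma<1$ the same bound gives $\bP(\tau_1+\cdots+\tau_k\ge n)\lesssim k\,e^{-A'(n/k)^\gamma}$; optimizing the trade-off with $(1-\xi)^k$ (take $k$ a small multiple of $n$) yields $Ce^{-Bn^\gamma}$ with $B$ explicit in $A,\gamma,\xi$ and the auxiliary constants, and one reads off that $B$ may be taken arbitrarily close to $A$ at the cost of enlarging $C$. For $\gamma=1$ one instead uses that $\tau$ has a finite exponential moment (controlled by $A$ and $\bar\tau$) together with a Bernstein/Chernoff estimate with explicit constants to get exponential decay of $\bP(\tau_1+\cdots+\tau_k\ge n)$ for $k\ll n$, again combined with $(1-\xi)^k$.

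\textbf{Main obstacle and bookkeeping.} The crux is this last renewal/large-deviation step carried out \emph{with explicit constants and exactly the right exponent}: in the polynomial case one must check that the summation genuinely produces $\beta-1$ (not $\beta$, nor $\beta-1-\eps$), and in every case one must track the dependence of each constant on $(\lambda,K,\eta,\max\{I_k\},\delta,\beta,C_\tau)$, respectively $(\lambda,K,\eta,\max\{I_k\},\delta,A,\gamma,C_\tau)$, and verify it is continuous. Two further points need care: (i) the normalized parts $\phi^\pm/a$ are only piecewise regular, with a kink on $\{\phi=0\}$, so one should either apply $L$ once to regularize before starting the coupling or run the argument at the level of signed densities, absorbing the loss into $C$ — this is what accounts for $\|\phi\|_\eta$ rather than $|\phi|_\eta$ in the statement; and (ii) since $m$ need not be $F$-invariant, the natural object toward which everything contracts is the $f$-invariant density built from the $F$-invariant measure of Proposition~\ref{prop-inv}, and if it is notationally cleaner one couples each of $\mu_1,\mu_2$ to it rather than to each other.
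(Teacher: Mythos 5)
Your overall strategy is the same one the paper executes: set up a renewal/coupling scheme in which mass is repeatedly returned to the base $\Delta_0$, a fixed fraction $\xi$ of the returning mass gets matched to a fixed density ($\bar\tau 1_{\Delta_0}$), and the leftover is estimated via tail estimates of the inter-coupling times. The paper's version is not phrased as a coupling on $\Delta\times\Delta$ but as an explicit decomposition $\psi=\sum_{w\in W}\psi_w$ over words $w$, with $L^{h(w)}\psi_w=\bP(w)\bar\tau 1_{\Delta_0}$; the word length $|w|$ plays the role of your ``number of coupling attempts,'' $\bP(W_k)=(1-p_{-1})^k p_{-1}$ plays the role of your geometric factor $(1-\xi)^k$, and $h(w)=\Sigma w+N|w|$ records the total tower time used. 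So the frameworks match up closely, and your polynomial-tail step (union bound/``one big jump'': $\bP(\tau_1+\cdots+\tau_k\ge n)\le k\,\bP(\tau\ge n/k)$, then sum against the geometric weight) is literally what the paper does in Proposition~\ref{prop-poly}.

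However, your stretched exponential step has a genuine gap. You propose to carry the same ``one big jump'' bound $\bP(\tau_1+\cdots+\tau_k\ge n)\lesssim k\,e^{-A'(n/k)^\gamma}$ into the regime $m(\tau\ge n)\le C_\tau e^{-An^\gamma}$, $\gamma<1$, and optimize against $(1-\xi)^k$. This does not give $e^{-Bn^\gamma}$. Optimizing
\[
(1-\xi)^k\, k\, e^{-A'(n/k)^\gamma}
\]
over $k$ puts the dominant contribution at $k\sim n^{\gamma/(\gamma+1)}$, and the resulting bound is $e^{-Bn^{\gamma/(\gamma+1)}}$, strictly worse than the claimed rate. (Your suggestion to take $k$ a small multiple of $n$ is also problematic: for $k=cn$ the factor $e^{-A'(n/k)^\gamma}=e^{-A'c^{-\gamma}}$ is bounded away from $0$, so the partial sum up to that cutoff does not decay at all.) The point is that the union bound necessarily degrades the exponent with $k$, while the geometric factor only buys linearly in $k$, and these never reconcile to give $n^\gamma$. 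The paper circumvents this with Proposition~\ref{prop-aexp}: an exponential-moment estimate showing $\bE\bigl[e^{\beta X^\gamma}\bigr]\le 1+\beta C_1$ for $\beta\le\alpha/2$, combined with the elementary subadditivity $(X_1+\cdots+X_k)^\gamma\le X_1^\gamma+\cdots+X_k^\gamma$ valid for $\gamma\le 1$, yielding
\[
\bP(\tau_1+\cdots+\tau_k\ge n)\le (1+\beta C_1)^k e^{-\beta n^\gamma}.
\]
Here the $k$-dependence and $n$-dependence separate multiplicatively, so choosing $\beta$ small enough that $(1-\xi)(1+\beta C_1)<1$ makes the geometric series converge, and the full rate $e^{-\beta n^\gamma}$ survives. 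That device is the essential missing ingredient in your outline; the rest (the recurrence-to-base lemma, the handling of densities that are only piecewise regular, and the reduction to $\bP(h>n)$) is faithfully in line with the paper.
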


\paragraph{Nonmixing Young towers} 
In the nonmixing case, define
\[
d=\gcd\{j\ge1:m(\{y\in Y:\tau(y)=j\})>0\}\ge2.
\]
There exist
\(\delta > 0\) and a finite set of positive integers \(\{I_k\}\)
with \(\gcd\{I_k\} = d\) 
such that \(m ( \{y \in Y : \tau(y) = I_k\}) \geq \delta \).

\begin{thm}
  \label{thm-NUE2}
  Let \(\phi : \Delta \to \R\) be an observable with \(\|\phi\|_\eta < \infty\)
  and \(\int_\Delta \phi \, dm_\Delta = 0\).
  Then Theorem~\ref{thm-NUE} holds with
  \(\int_\Delta |L^n \phi| \, dm_\Delta\) replaced by
  \(
    \int_\Delta \bigl| \sum_{k=0}^{d-1} L^{nd+k} \phi \bigr| \, dm_\Delta
    .
  \)
\end{thm}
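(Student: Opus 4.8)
The plan is to reduce Theorem~\ref{thm-NUE2} to the mixing case, Theorem~\ref{thm-NUE}, by quotienting out the greatest common divisor.

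First I would write $\tau=d\tilde\tau$, where $\tilde\tau:Y\to\Z^+$ is constant on partition elements with $\gcd\{\tilde\tau(a):a\in\alpha\}=1$, and let $\tilde\Delta$, $\tilde f$, $d_{\tilde\Delta}$, $m_{\tilde\Delta}$, $\tilde L$ be the tower, map, metric, probability measure and transfer operator associated as in Subsection~\ref{subsec-NUE} to the same base data $(Y,m,F,\alpha)$ but with inducing time $\tilde\tau$. This tower is mixing. I would then check that all the quantities entering Theorem~\ref{thm-NUE} for $\tilde\Delta$ are controlled by the corresponding ones for $\Delta$: one has $\bar{\tilde\tau}=\bar\tau/d$ and $m(\tilde\tau\ge n)=m(\tau\ge dn)$, so a polynomial tail $m(\tau\ge n)\le C_\tau n^{-\beta}$ passes over to $m(\tilde\tau\ge n)\le C_\tau d^{-\beta}n^{-\beta}$ and an exponential tail $m(\tau\ge n)\le C_\tau e^{-An^\gamma}$ to $m(\tilde\tau\ge n)\le C_\tau e^{-Ad^\gamma n^\gamma}$; moreover $\{I_k/d\}$ is a finite set of positive integers with $\gcd=1$, $\max\{I_k/d\}\le\max\{I_k\}$, and $m(\tilde\tau=I_k/d)=m(\tau=I_k)\ge\delta$. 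Since $d\mid I_k$ we have $d\le\max\{I_k\}$, so Theorem~\ref{thm-NUE} applies to $\tilde\Delta$ with constants depending continuously on $\lambda,K,\eta,\max\{I_k\},\delta$ and the tail data.

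Next I would decompose $\Delta$ by level modulo $d$: $\Delta=\bigsqcup_{j\in\Z/d}\Delta_j$ with $\Delta_j=\{(y,\ell)\in\Delta:\ell\equiv j\pmod d\}$. Since $\tau(y)\equiv0\pmod d$, the map $f$ sends $\Delta_j$ bijectively onto $\Delta_{j+1}$, so $f^d$ maps each $\Delta_j$ onto itself with $f^{-d}\Delta_j=\Delta_j$, and $L$ carries functions supported on $\Delta_i$ to functions supported on $\Delta_{i+1}$. The key structural fact I would establish is that for each $j$ the relabelling $\pi_j\colon(y,dm+j)\mapsto(y,m)$ is an isometric bijection of $(\Delta_j,d_\Delta)$ onto $(\tilde\Delta,d_{\tilde\Delta})$ that conjugates $f^d|_{\Delta_j}$ to $\tilde f$ and carries $m_\Delta|_{\Delta_j}$ to $\tfrac1d m_{\tilde\Delta}$. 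Since $f^{-d}\Delta_j=\Delta_j$, the operator $L^d$ restricted to functions supported on $\Delta_j$ is the transfer operator of $f^d|_{\Delta_j}$ for $m_\Delta|_{\Delta_j}$, and (a constant rescaling of the reference measure being irrelevant here) $\pi_j$ carries it to $\tilde L$; hence for any $\psi\in L^1(\Delta)$,
\[
  \int_{\Delta_j}\bigl|(L^d)^n\psi\bigr|\,dm_\Delta=\frac1d\int_{\tilde\Delta}\bigl|\tilde L^n\psi_j\bigr|\,dm_{\tilde\Delta},\qquad \psi_j:=(\psi|_{\Delta_j})\circ\pi_j^{-1},
\]
with $\|\psi_j\|_\eta\le\|\psi\|_\eta$ because $\pi_j$ is an isometry.

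I would then take $\psi=\sum_{k=0}^{d-1}L^k\phi$, so that $\sum_{k=0}^{d-1}L^{nd+k}\phi=(L^d)^n\psi$. Two routine inputs are needed: that $L$ is bounded on $\{\phi:\|\phi\|_\eta<\infty\}$ with norm depending continuously on $\lambda,K,\eta$ (an elementary estimate, the operator $L$ being a shift on levels $\ell\ge1$ and controlled by the distortion bound on $\zeta$ on level $0$), so that $\|\psi\|_\eta\le C_1\|\phi\|_\eta$ with $C_1=C_1(\lambda,K,\eta,d)$; and that $\int_{\Delta_j}\psi\,dm_\Delta=0$ for \emph{each} $j$. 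For the latter, write $\psi|_{\Delta_j}=\sum_{k=0}^{d-1}L^k(\phi|_{\Delta_{j-k}})$ with each summand supported on $\Delta_j$; using $\int_\Delta L\phi\,dm_\Delta=\int_\Delta\phi\,dm_\Delta$ (the defining relation with test function $1$), $\int_{\Delta_j}\psi\,dm_\Delta=\sum_{k=0}^{d-1}\int_{\Delta_{j-k}}\phi\,dm_\Delta=\int_\Delta\phi\,dm_\Delta=0$. Thus each $\psi_j$ is a mean-zero observable on $\tilde\Delta$ with $\|\psi_j\|_\eta\le C_1\|\phi\|_\eta$, and summing the previous display over $j$ and applying Theorem~\ref{thm-NUE} on $\tilde\Delta$ gives
\[
  \int_\Delta\Bigl|\sum_{k=0}^{d-1}L^{nd+k}\phi\Bigr|\,dm_\Delta=\sum_{j\in\Z/d}\frac1d\int_{\tilde\Delta}\bigl|\tilde L^n\psi_j\bigr|\,dm_{\tilde\Delta}\le C_1\tilde C\,\|\phi\|_\eta\,r(n),
\]
where $r(n)=n^{-(\beta-1)}$ in the polynomial case and $r(n)=e^{-Bn^\gamma}$ in the exponential case, with $\tilde C$ (and $B$) the constant(s) supplied by Theorem~\ref{thm-NUE} for $\tilde\Delta$; these, hence $C_1\tilde C$, depend continuously on the stated data, which is the assertion of Theorem~\ref{thm-NUE2}.

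The step I expect to be the crux is the componentwise vanishing $\int_{\Delta_j}\psi\,dm_\Delta=0$. The hypothesis $\int_\Delta\phi\,dm_\Delta=0$ only yields $\sum_j\int_{\Delta_j}\phi\,dm_\Delta=0$, so $L^{nd}\phi$ by itself is not a mean-zero observable on the mixing components $\Delta_j$ and cannot be handed to Theorem~\ref{thm-NUE}; averaging $\sum_{k=0}^{d-1}L^{nd+k}\phi$ around the $d$-cycle $\Delta_0\to\Delta_1\to\cdots\to\Delta_{d-1}\to\Delta_0$ is precisely what restores a vanishing integral on each component and makes the reduction go through. Everything else — the tail bookkeeping for $\tilde\tau$, boundedness of $L$ on the H\"older space over $\Delta$, and checking that $\pi_j$ is an isometric tower isomorphism — is routine.
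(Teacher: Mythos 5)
Your proof is correct and follows essentially the same route as the paper's: decompose $\Delta$ by level mod $d$ into $\Delta(r)=\{(y,\ell):\ell\equiv r\bmod d\}$, set $\tilde\phi=\sum_{k=0}^{d-1}L^k\phi$, observe via $\sum_{k=0}^{d-1}1_{\Delta(r)}\circ f^k\equiv1$ (your version: the rotation $\Delta_{j-k}\to\Delta_j$ under $L^k$) that $\int_{\Delta(r)}\tilde\phi\,dm_\Delta=0$ for each $r$, and apply Theorem~\ref{thm-NUE} to $f^d$ on each $\Delta(r)$ with data $\{I_k/d\}$, $\delta$. The only cosmetic difference is that you spell out the isomorphism of $\Delta(r)$ with the auxiliary tower $\tilde\Delta$ over $(Y,F,\tilde\tau=\tau/d)$ and bound $\|\tilde\phi\|_\eta$ by a direct Lasota--Yorke estimate for $L$, whereas the paper simply asserts the Young tower structure of $f^d|_{\Delta(r)}$ and reuses its $\cA$-machinery to bound $\|L^k\phi\|_\eta$.
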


Theorem~\ref{thm-NUE2} has the following equivalent reformulation which gives uniform mixing rates up to a cycle of length $d$.  We state the reformulation for the case of (stretched) exponential mixing.  The polynomial mixing case goes the same way.

Write $\Delta=E_1\cup \dots\cup E_d$ where $f(E_j)=E_{j+1\bmod d}$
and $f^d:E_j\to E_j$ is a mixing tower for $j=1,\dots,d$.

\begin{cor}  \label{cor-NUE2}
Suppose that we are in the situation of Theorem~\ref{thm-NUE2} and that $m(\tau\ge n)\le C_\tau e^{-An^\gamma}$ as in the second part of Theorem~\ref{thm-NUE}.
Fix $j=1,\dots,d$.
Then there exist uniform constants $B,\,C>0$ as in Theorem~\ref{thm-NUE} such that
\[
\Big|\int_\Delta \phi\,\psi\circ f^{nd}\,dm_\Delta
-\int_\Delta \phi\,dm_\Delta
\int_\Delta \psi\,dm_\Delta\Big|\le C\|\phi\|_\eta|\psi|_\infty e^{-Bn^\gamma},
\]
for all $n\ge1$ and all $\phi,\psi\in L^\infty$ supported in $E_j$ with
$\|\phi\|_\eta<\infty$. 
\end{cor}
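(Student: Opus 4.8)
The plan is to obtain this as an essentially formal consequence of Theorem~\ref{thm-NUE2}, using transfer-operator duality together with the observation that, when $\phi$ is supported in a single cyclic piece $E_j$, the iterates $L^{nd}\phi,\,L^{nd+1}\phi,\dots,L^{nd+d-1}\phi$ have pairwise disjoint supports. First I would set up the disjointness: since $f(E_i)=E_{i+1\bmod d}$, the transfer operator $L$ maps functions supported in $E_i$ to functions supported in $E_{i+1\bmod d}$, so for $\phi$ supported in $E_j$ the function $L^{nd+k}\phi$ is supported in $E_{j+k\bmod d}$, and $E_{j},E_{j+1},\dots,E_{j+d-1}$ are distinct. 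Therefore
\[
\int_\Delta\Bigl|\sum_{k=0}^{d-1}L^{nd+k}\phi\Bigr|\,dm_\Delta=\sum_{k=0}^{d-1}\int_\Delta\bigl|L^{nd+k}\phi\bigr|\,dm_\Delta\ge\int_\Delta\bigl|L^{nd}\phi\bigr|\,dm_\Delta,
\]
so Theorem~\ref{thm-NUE2} (exponential part; the polynomial part is identical) gives $\int_\Delta|L^{nd}\phi|\,dm_\Delta\le C\|\phi\|_\eta e^{-Bn^\gamma}$ for all $\phi\in C^\eta$ supported in $E_j$ with $\int_\Delta\phi\,dm_\Delta=0$.

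Next I would reduce the general estimate to this mean-zero case. Here $m_\Delta$ is $f$-invariant, so each $E_j$ has $m_\Delta$-measure $1/d$ and $\mathbf 1_{E_j}$ is the density of $m_\Delta|_{E_j}$, whence $L^{d}\mathbf 1_{E_j}=\mathbf 1_{E_j}$ and so $L^{nd}\mathbf 1_{E_j}=\mathbf 1_{E_j}$ for all $n$. Given $\phi,\psi$ supported in $E_j$ with $\|\phi\|_\eta<\infty$, put $\bar\phi=\int_\Delta\phi\,dm_\Delta$ and $\phi_0=\phi-d\bar\phi\,\mathbf 1_{E_j}$; then $\phi_0$ is supported in $E_j$, has zero $m_\Delta$-integral, and $\|\phi_0\|_\eta\le(1+2d)\|\phi\|_\eta$ since $|\bar\phi|\le|\phi|_\infty$ and $\|\mathbf 1_{E_j}\|_\eta\le2$ (the metric $d_\Delta$ gives distance $1$ to points at distinct levels, and $E_j$ is a union of levels). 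Using $\int_\Delta\phi\,\psi\circ f^{nd}\,dm_\Delta=\int_\Delta(L^{nd}\phi)\psi\,dm_\Delta$, the splitting $L^{nd}\phi=L^{nd}\phi_0+d\bar\phi\,\mathbf 1_{E_j}$, and $\int_{E_j}\psi\,dm_\Delta=\int_\Delta\psi\,dm_\Delta$, one obtains
\[
\int_\Delta\phi\,\psi\circ f^{nd}\,dm_\Delta-d\,\bar\phi\int_\Delta\psi\,dm_\Delta=\int_\Delta(L^{nd}\phi_0)\,\psi\,dm_\Delta,
\]
so that $\bigl|\int_\Delta\phi\,\psi\circ f^{nd}\,dm_\Delta-d\,\bar\phi\int_\Delta\psi\,dm_\Delta\bigr|\le|\psi|_\infty\int_\Delta|L^{nd}\phi_0|\,dm_\Delta\le C\|\phi\|_\eta|\psi|_\infty e^{-Bn^\gamma}$ by the first paragraph, with $B,C$ as in Theorem~\ref{thm-NUE}.

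There is no genuine analytic difficulty here—Theorem~\ref{thm-NUE2} carries all the weight—so the only point requiring care is the bookkeeping in this last identity, i.e.\ identifying exactly the non-decaying part of the correlation. It is cleanest to phrase the conclusion in terms of the probability-preserving mixing system $f^d:(E_j,\hat m_j)\to(E_j,\hat m_j)$, where $\hat m_j=d\,m_\Delta|_{E_j}$ is $m_\Delta$ normalized on $E_j$: in those terms the quantity above is exactly the correlation $\int_{E_j}\phi\,\psi\circ f^{nd}\,d\hat m_j-\int_{E_j}\phi\,d\hat m_j\int_{E_j}\psi\,d\hat m_j$ up to the harmless factor $m_\Delta(E_j)$, and the identity $L^{nd}\mathbf 1_{E_j}=\mathbf 1_{E_j}$ is what makes this come out on the nose. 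Finally one checks that $\phi_0\in C^\eta$ (clear from $\mathbf 1_{E_j}\in C^\eta$) and that the constants $B,C$ depend only on $\lambda$, $K$, $\eta$, $\max\{I_k\}$, $\delta$, $A$, $\gamma$, $C_\tau$ inherited from Theorem~\ref{thm-NUE}, not on $\phi$ or $\psi$.
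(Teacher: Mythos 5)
Your argument is correct and essentially the natural one. The paper offers no written proof for this corollary (it is presented only as an ``equivalent reformulation'' of Theorem~\ref{thm-NUE2}), so there is no alternative route to compare against; still, two points are worth recording.

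First, your careful bookkeeping turns up a discrepancy with the statement as printed: you derive
\[
\Big|\int_\Delta \phi\,\psi\circ f^{nd}\,dm_\Delta
 - d\int_\Delta \phi\,dm_\Delta\int_\Delta \psi\,dm_\Delta\Big|
 \le C\|\phi\|_\eta|\psi|_\infty\, e^{-Bn^\gamma},
\]
with an extra factor of $d$ in the subtracted term, and that factor genuinely should be there. Taking $\phi=\psi=1_{E_j}$ gives $\int_\Delta \phi\,\psi\circ f^{nd}\,dm_\Delta = m_\Delta(E_j)$ while $\int_\Delta\phi\,dm_\Delta\int_\Delta\psi\,dm_\Delta = m_\Delta(E_j)^2$, so the printed left-hand side equals the constant $m_\Delta(E_j)(1-m_\Delta(E_j))=(d-1)/d^2$ and does not decay; your version vanishes identically for this pair. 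Equivalently, your displayed quantity is exactly $m_\Delta(E_j)$ times the correlation of $(\phi,\psi)$ for the normalized mixing system $(E_j,\,d\,m_\Delta|_{E_j},\,f^d)$, as you observe, so yours is the correct formulation and the printed corollary is missing the factor $d$.

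Second, you use $f$-invariance of $m_\Delta$ in two places: $m_\Delta(E_j)=1/d$, and $L^d 1_{E_j}=1_{E_j}$ (which rests on $L1=1$). Theorem~\ref{thm-NUE2} does not assume $m$ is $F$-invariant, so this is an additional hypothesis, but without it the conclusion is not a correlation-decay statement at all (the $\phi=\psi=1_{E_j}$ computation again fails), so invariance of $m$ is clearly what is intended and should be made explicit. With that in place, the remaining steps --- the disjointness of the supports of $L^{nd}\phi_0,\dots,L^{nd+d-1}\phi_0$, which lets you extract $\int_\Delta|L^{nd}\phi_0|\,dm_\Delta$ from the bound on $\int_\Delta|\sum_{k=0}^{d-1}L^{nd+k}\phi_0|\,dm_\Delta$; the decomposition $\phi=\phi_0+d\bar\phi\,1_{E_j}$; the norm estimate $\|\phi_0\|_\eta\le(1+2d)\|\phi\|_\eta$ using $\|1_{E_j}\|_\eta\le 2$; and transfer-operator duality --- are all correct, and the constants inherited from Theorem~\ref{thm-NUE} are indeed uniform in the required sense.
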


\paragraph{Examples}

In~\cite{KKMsub1,KKMsub2}, we verified for specific families of nonuniformly expanding maps that the corresponding
induced maps $F$ are uniformly expanding, as in Subsection~\ref{subsec-UE},
with uniform constants $\lambda,K,\eta$.
A key ingredient in this verification is the work of~\cite{Alves04,AlvesLuzzattoPinheiro05,AlvesViana02,FreitasTodd09} on strong statistical stability
(where the density of the invariant measure varies continuously in $L^1$).
It follows  from this abstract framework (specifically condition (U1) in~\cite{AlvesViana02}) that the data $d=\gcd\{I_k\}\ge1$ and $\delta>0$ associated with the inducing time $\tau$ varies continuously in the mixing case and upper semicontinuously in general (so $d$ can decrease under small perturbations but cannot increase).
Hence for the examples in~\cite{KKMsub1,KKMsub2}, uniform estimates
on decay of correlations follow immediately from
Theorems~\ref{thm-NUE} and~\ref{thm-NUE2}.

Specifically, we obtain uniform polynomial decay of correlations for
intermittent maps~\cite[Example~4.9]{KKMsub2},
uniform exponential decay of correlations (up to a finite cycle) for smooth unimodal and multimodal maps satisfying the Collet-Eckmann condition~\cite[Example~4.10]{KKMsub2}, and
uniform stretched exponential decay of correlations for Viana maps~\cite[Example~4.11]{KKMsub2}.

\subsection{Nonuniformly hyperbolic transformations}
\label{subsec-NUH}

Let $T:M\to M$ be a diffeomorphism (possibly with singularities) defined on a
Riemannian manifold $(M,d)$.   
Fix a subset $Y\subset M$.  It is assumed that there is a ``product structure'':
namely a family of ``stable disks'' $\{W^s\}$ that are disjoint and cover $Y$,
and a family of ``unstable disks'' $\{W^u\}$ that are disjoint and cover $Y$.
Each stable disk intersects each unstable disk in precisely one point.
The stable and unstable disks containing $y$ are labelled $W^s(y)$ and $W^u(y)$.

Suppose that there is a partition $\{Y_j\}$ of $Y$ and
integers $\tau(j)\ge1$ with $\gcd\{\tau(j)\}=1$ such that $T^{\tau(j)}(W^s(y))\subset W^s(T^{\tau(j)}y)$
for all $y\in Y_j$.
Define the return time function $\tau:Y\to\Z^+$ by $\tau|_{Y_j}=\tau(j)$
and the induced map
$F:Y \to Y$ by $F(y)=T^{\tau(y)} (y)$.

Let $s$ denote the {\em separation time} with respect to the map $F:Y\to Y$.  
That is, if
$y,z\in Y$, then $s(y,z)$ is the least integer $n\ge0$ such that $F^n x$, $F^ny$ lie in distinct partition elements of $Y$.    
\begin{itemize}
\item[(P1)]
There exist constants $K_0\ge1$, $\rho_0\in(0,1)$ such that
\begin{itemize}
\item[(i)]  If $z\in W^s(y)$, then $d(F^ny,F^nz)\le K_0\rho_0^n$, 
\item[(ii)]  If $z\in W^u(y)$, then $d(F^ny,F^nz)\le K_0\rho_0^{s(y,z)-n}$,
\item[(iii)] If $y,z\in Y$, then $d(T^jy,T^jz)\le K_0(d(y,z)+d(Fy,Fz))$
for all $0\le j<\min\{\tau(y),\tau(z)\}$.
\end{itemize}
\end{itemize}

Let $\bar Y=Y/\sim$ where $y\sim z$ if $y\in W^s(z)$ and
define the partition $\{\bar Y_j\}$ of $\bar Y$.
We obtain a well-defined return time function  $\tau:\bar Y\to\Z^+$ and
induced map $\bar F:\bar Y\to\bar Y$.
Suppose that
the map $\bar F:\bar Y\to\bar Y$ and partition $\alpha=\{\bar Y_j\}$ separate points in $\bar Y$,
and let $s$ denote also the separation time on $\bar{Y}$.
Fix $\theta\in(0,1)$.
Then $d_\theta(y,z)=\theta^{s(y,z)}$ defines
a metric on $\bar Y$.
Suppose further that  
$\bar F:\bar Y\to\bar Y$ is a uniformly expanding map
in the sense of Subsection~\ref{subsec-UE} on the metric space $(\bar Y,d_\theta)$,
with partition $\alpha$ and constants $\lambda=1/\theta>1$, $K>0$, $\eta=1$.
Let $\bar\mu_Y$ denote the $\bar F$-invariant probability measure on $\bar Y$ from Proposition~\ref{prop-inv}.
We assume that $\tau:\bar Y\to\Z^+$ is integrable.
We suppose also that there is an
$F$-invariant probability measure 
$\mu_Y$ on $Y$ such that $\bar\pi_*\mu_Y=\bar{\mu}_{Y}$ where $\bar\pi:Y\to\bar Y$ is the quotient map.

As in Subsection~\ref{subsec-NUE},
starting from $\bar F:\bar Y\to \bar Y$ and $\tau:\bar Y\to\Z^+$, we can form the {\em quotient tower} $\bar f:\bar\Delta\to\bar \Delta$ with
$\bar f$-invariant mixing probability measure $\bar\mu_\Delta$.
Similarly, starting from $F:Y\to Y$ and $\tau:Y\to\Z^+$, we form the tower 
$f:\Delta\to\Delta$ such that $F=f^\tau:Y\to Y$ with
$f$-invariant mixing probability measure $\mu_\Delta$.

Define the semiconjugacy
$\pi:\Delta\to M$, $\pi(y,\ell)=T^\ell y$.  Then
$\mu=\pi_*\mu_\Delta$ is a $T$-invariant mixing probability measure on $M$.

As in Subsection~\ref{subsec-NUE}, we restrict to the cases $\mu(\tau>n)=O(n^{-\beta})$, $\beta>1$, and $\mu(\tau>n)=O(e^{-An^\gamma})$, $A>0$, $\gamma\in(0,1]$.

\begin{thm} \label{thm-NUH}
Let $\eta\in(0,1]$.
 Then there exist $C>0$, $B>0$ depending continuously on the constants in Theorem~\ref{thm-NUE} (associated to the nonuniformly expanding map
$\bar f:\bar\Delta\to\bar\Delta$)  as well as $\eta$, $\rho_0$ and $K_0$, such that 
$|\int_M v\,w\circ T^n\,d\mu-\int_M v\,d\mu\int_M w\,d\mu|
\le  Ca_n\|v\|_\eta\|w\|_\eta$,
 for all $v,w\in C^\eta(M)$, $n\ge 1$,
where $a_n=n^{-(\beta-1)}$ or $e^{-Bn^\gamma}$ respectively.
 \end{thm}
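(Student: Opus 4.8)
The plan is to reduce the decay-of-correlations estimate for $T:M\to M$ to the mixing estimate for the quotient tower $\bar f:\bar\Delta\to\bar\Delta$ already established in Theorem~\ref{thm-NUE}. The standard device here is the approximation/telescoping argument of Young: one passes from observables on $M$ to observables on the tower $\Delta$ via the semiconjugacy $\pi$, then from $\Delta$ to the quotient tower $\bar\Delta$ by averaging along stable disks, controlling the error at each stage.

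First I would transfer the problem to $\Delta$. Given $v,w\in C^\eta(M)$, set $\tilde v=v\circ\pi$, $\tilde w=w\circ\pi$ on $\Delta$; since $\mu=\pi_*\mu_\Delta$ one has $\int_M v\,w\circ T^n\,d\mu=\int_\Delta \tilde v\,\tilde w\circ f^n\,d\mu_\Delta$, so it suffices to bound correlations on $(\Delta,\mu_\Delta)$. Here the relevant norm is with respect to $d_\Delta$: because of property (P1)(iii) and the definition $\pi(y,\ell)=T^\ell y$, the functions $\tilde v,\tilde w$ are H\"older with a constant controlled by $\|v\|_\eta,\|w\|_\eta$ and $K_0$ (on each level $\{\ell={\rm const}\}$ one uses that $T^\ell$ is $K_0$-Lipschitz up to time $\tau$; between levels $d_\Delta=1$ so boundedness suffices). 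Second, I would push down to $\bar\Delta$. Fix a reference point in each stable disk; for the ``future'' observable $w\circ T^n=\tilde w\circ f^n$ I would exploit contraction along stable leaves: $d(f^my,f^mz)\le K_0\rho_0^{m}$ for $z\in W^s(y)$ by (P1)(i), so $\tilde w\circ f^{n}$ restricted to times beyond some intermediate $k=\lfloor n/2\rfloor$ is, up to an error $O(\rho_0^{(n-k)\eta})$ times $|w|_\eta$, constant on stable leaves, hence descends to a function $\bar w_k$ on $\bar\Delta$ with $|\bar w_k|_\infty\le |w|_\infty$. For the ``past'' observable one uses the transfer operator: writing correlations as $\int_\Delta L^k(\tilde v)\cdot(\tilde w\circ f^{n-k})\,d\mu_\Delta$ (where $L$ is the transfer operator of $f$), the function $L^k\tilde v$ is, again up to an exponentially small error in $k$ controlled by contraction along stable leaves and the distortion bound (P1), close to a function $\overline{L^k\tilde v}$ that is constant on stable leaves and so corresponds to $\bar L^k$ applied to a descended observable $\bar v$ on $\bar\Delta$, with $\|\bar v\|_\eta$ controlled. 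Combining: the $n$-step correlation on $\Delta$ equals the $(n-2k)$-step correlation on $\bar\Delta$ of $\bar L^k\bar v$ against $\bar w_k$, plus errors of size $O(\rho_0^{k\eta})\|v\|_\eta\|w\|_\eta$. Now apply Theorem~\ref{thm-NUE} to $\bar f:\bar\Delta\to\bar\Delta$: the $\bar\Delta$-correlation is $\le C\|\bar L^k\bar v\|_\eta|\bar w_k|_\infty\,a_{n-2k}$ with $a_n=n^{-(\beta-1)}$ or $e^{-Bn^\gamma}$. Choosing $k=\lfloor n/2\rfloor$ (or $k=\lfloor \epsilon n\rfloor$ for suitable small $\epsilon$ in the polynomial case so that $a_{n-2k}$ and $a_n$ are comparable), the error term $\rho_0^{k\eta}$ is dominated by $a_n$ for both rate classes, giving the claimed bound with $C,B$ depending continuously on the stated constants.

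In executing this I would keep careful track of how the norm $\|\bar L^k\bar v\|_\eta$ stays bounded: this uses Corollary~\ref{cor-main}/Proposition~\ref{prop-inv} for the quotient map $\bar F$ (uniform spectral gap for $\bar P$), lifted to the tower, to guarantee $\|\bar L^k\bar v\|_\eta\le C'\|\bar v\|_\eta$ uniformly in $k$ — so the error from the $\bar\Delta$-side is $C'C\|\bar v\|_\eta|w|_\infty a_{n-2k}$, uniform. The only genuinely delicate estimate is the descent of $L^k\tilde v$ to the quotient, i.e.\ showing $|L^k\tilde v-\overline{L^k\tilde v}|$ is exponentially small pointwise with the right dependence on $\|v\|_\eta$; this is the usual Young-type argument pairing the contraction (P1)(i)/(ii) along stable/unstable leaves against the bounded-distortion estimate~\eqref{eq-zeta-dist} for $\bar F$ (and the analogous Jacobian control for $F$ along stable leaves), and it is where one must be slightly careful that all implied constants are expressible through $\lambda,K,\eta,\rho_0,K_0$ only. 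I expect this stable-leaf descent to be the main obstacle; once it is in place, the rest is the telescoping bookkeeping and an appeal to Theorem~\ref{thm-NUE}.
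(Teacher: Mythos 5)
There is a genuine gap in your argument, and it is precisely the point that Remark~\ref{rmk-NUH} singles out as the key subtlety. You write that ``$d(f^m y, f^m z)\le K_0\rho_0^m$ for $z\in W^s(y)$ by (P1)(i)'', but that is not what (P1)(i) says. (P1)(i) gives $d(F^n y, F^n z)\le K_0\rho_0^n$ for the \emph{induced} map $F$, i.e.\ contraction per \emph{return} to $Y$. Under the tower map $f$ (equivalently, under $T$), a point simply moves up the tower between returns, and no contraction is assumed during those steps. The correct estimate is in terms of the number of returns, as in (P2): $d(\pi f^m p,\pi f^m q)\le K_0\rho_0^{\kappa_m(p)}$ with $\kappa_m(p)=\#\{j\le m: f^jp\in\Delta_0\}$, which can be far smaller than $m$. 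Your whole telescoping argument hinges on the error being $O(\rho_0^{k\eta})$ and hence dominated by $a_n$ for $k\sim n$; but with the correct bound the error after $k$ steps of $f$ is controlled by $\int\rho^{\kappa_k}\,d\bar\mu_\Delta$, which in the polynomial-tail case decays only polynomially (indeed at essentially the same rate $n^{-(\beta-1)}$ as $a_n$), not exponentially. So the step where you ``choose $k=\lfloor n/2\rfloor$ (or $\lfloor\epsilon n\rfloor$) so that $\rho_0^{k\eta}$ is dominated by $a_n$'' does not go through. Estimating $|\rho^{\kappa_n}|_1$ and showing it decays at the right rate is the real technical content of this section; the paper does it via Lemma~\ref{lem-tilde}, which reduces the question to tails of the sums $\tau_k=\sum_{j<k}\tau\circ\bar F^j$ and then handles the polynomial and stretched exponential cases separately (the latter requiring an additional ``approximate independence'' estimate via Proposition~\ref{prop-zetan} to apply the Chernoff-type bound of Proposition~\ref{prop-aexp}). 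Your proposal omits this entirely.

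A smaller point: your claim that $\|\bar L^k\bar v\|_\eta$ is uniformly bounded because ``Corollary~\ref{cor-main}/Proposition~\ref{prop-inv}... uniform spectral gap for $\bar P$, lifted to the tower'' is a misattribution. The tower transfer operator $L$ has no spectral gap on $C^\eta(\bar\Delta)$ when $\tau$ has heavy tails; the uniform bound $\|L^n\phi\|_\eta\le \text{const}\,\|\phi\|_\eta$ comes from the distortion/Lipschitz-density control for the tower operator itself (Proposition~\ref{prop-gn} and Corollary~\ref{cor-A}(b), and in this section Proposition~\ref{prop-tildev}(c)), not from the spectral gap of the base map $\bar F$.

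Apart from those two issues, your overall decomposition — pass to $\Delta$ via $\pi$, approximate $\tilde v, \tilde w$ by functions constant on stable leaves, push $L^k$-images down to $\bar\Delta$, and invoke Theorem~\ref{thm-NUE} — is indeed the same high-level strategy as the paper's Corollary~\ref{cor-NUH} ($I_1,\dots,I_4$ splitting). The proposal would be correct if one added the stronger hypothesis of exponential contraction along stable leaves of $T$, as in Young~\cite{Young98}; removing that hypothesis is exactly what forces the extra work.
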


\begin{rmk} \label{rmk-NUH}
Note that there is no assumption about contraction rates along stable manifolds for $T$; all that is required is exponential contraction/expansion for the induced map $F:Y\to Y$.
This is in contrast to~\cite{Young98} where exponential contraction is assumed for $T$ (this restriction is also present 
in~\cite{AlvesPinheiro08}) and~\cite{AlvesAzevedo16} where polynomial contraction is assumed for $T$.

The method for removing such assumptions on contractivity of $T$ is due to Gou\"ezel (based on ideas in~\cite{ChazottesGouezel12})
and was used previously in~\cite[Appendix~B]{MT14}.
\end{rmk}

\section{Proof for uniformly expanding maps}
\label{sec-UE}

In this section, we prove Theorem~\ref{thm-UE} and Proposition~\ref{prop-inv}.

For $\psi:Y \to (0, \infty)$, we define $|\psi|_{\eta,\ell} = |\log \psi|_\eta$.
Note that
  \begin{align} \label{eq-bound}
    e^{-|\psi|_{\eta,\ell}} \int_Y\psi \, dm
    \le \psi \le 
    e^{|\psi|_{\eta,\ell}}\int_Y\psi\,dm.
  \end{align}
  Also, for at most countably many observables $\psi_k:Y\to(0,\infty)$,
  \begin{align} \label{eq-sum}
    \Bigl|\sum_{k} \psi_k\Bigr|_{\eta,\ell}
    \le \sup_{k}|\psi_k|_{\eta,\ell}.
  \end{align}

\begin{prop} 
  \label{prop-dist}
  Let $\psi:Y\to(0,\infty)$.
  Then
$|P_m\psi|_{\eta,\ell} \le K+ \lambda^{-\eta}|\psi|_{\eta,\ell}$.  
\end{prop}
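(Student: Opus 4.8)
The plan is to split $P_m\psi$ into its branch contributions and apply the subadditivity estimate \eqref{eq-sum}. Write $P_m\psi=\sum_{a\in\alpha}\psi_a$, where $\psi_a(y)=\zeta(y_a)\,\psi(y_a)$ and $y_a\in a$ denotes the unique preimage of $y$ under $F$; this is well defined on all of $Y$ because $F$ restricts to a bijection from $a$ onto $Y$. Each $\psi_a$ is strictly positive, so \eqref{eq-sum} gives $|P_m\psi|_{\eta,\ell}\le\sup_{a\in\alpha}|\psi_a|_{\eta,\ell}$, and it suffices to bound $|\log\psi_a|_\eta$ uniformly in $a$.

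Next, fix $a\in\alpha$ and $x,y\in Y$, and note that $\log\psi_a(y)=\log\zeta(y_a)+\log\psi(y_a)$, so $|\log\psi_a(x)-\log\psi_a(y)|\le|\log\zeta(x_a)-\log\zeta(y_a)|+|\log\psi(x_a)-\log\psi(y_a)|$. Since $x_a,y_a$ lie in the same partition element $a$, the distortion bound in \eqref{eq-zeta-dist}, together with $Fx_a=x$ and $Fy_a=y$, gives $|\log\zeta(x_a)-\log\zeta(y_a)|\le K\,d(x,y)^\eta$. For the other term, $|\log\psi(x_a)-\log\psi(y_a)|\le|\psi|_{\eta,\ell}\,d(x_a,y_a)^\eta$, and the expansion bound in \eqref{eq-zeta-dist}, namely $d(x,y)=d(Fx_a,Fy_a)\ge\lambda\,d(x_a,y_a)$, yields $d(x_a,y_a)^\eta\le\lambda^{-\eta}d(x,y)^\eta$. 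Combining the two estimates, $|\log\psi_a(x)-\log\psi_a(y)|\le\bigl(K+\lambda^{-\eta}|\psi|_{\eta,\ell}\bigr)d(x,y)^\eta$, hence $|\psi_a|_{\eta,\ell}\le K+\lambda^{-\eta}|\psi|_{\eta,\ell}$. Taking the supremum over $a\in\alpha$ and applying \eqref{eq-sum} finishes the proof.

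There is no genuine obstacle here; the argument is a short chain of inequalities. The only points worth attention are that the full-branch assumption is what lets $\psi_a$ be defined on all of $Y$ so that the H\"older seminorm is taken over all pairs $x,y$, and that the constant $K$ is \emph{not} inflated by any expansion factor precisely because the distortion hypothesis in \eqref{eq-zeta-dist} is already phrased in terms of $d(Fx,Fy)$ rather than $d(x,y)$. This is exactly the feature that makes the estimate contract by the factor $\lambda^{-\eta}<1$ when iterated in the subsequent coupling argument.
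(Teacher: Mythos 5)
Your proof is correct and is essentially the same as the paper's: you decompose $P_m\psi$ over branches (your $\psi_a$ is the paper's $P_m(1_a\psi)$), bound the log-H\"older seminorm of each branch term by the distortion and expansion hypotheses, and conclude via~\eqref{eq-sum}. The closing remarks about the role of the full-branch assumption and the phrasing of the distortion hypothesis in terms of $d(Fx,Fy)$ are accurate and consistent with how the estimate is used later.
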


\begin{proof}
  For $a\in\alpha$ write $\psi_a=1_a\psi$.
  Then $P_m \psi = \sum_{a} P_m \psi_a$.
  For $y\in Y$, we have $(P_m\psi_a)(y)=\zeta(y_a)\psi(y_a)$ 
  where $y_a$ is the unique preimage of $y$ under $F$ lying in $a$.

  Let $x,y\in Y$ with preimages $x_a,y_a\in a$.
  Then
  \begin{align*}
    |\log (P_m\psi_a)(x)- & \log (P_m\psi_a)(y)|
    \le 
    |\log \zeta(x_a)-\log \zeta(y_a)|+ |\log \psi(x_a)-\log \psi(y_a)|
    \\ 
    & \le 
    K d(Fx_a,Fy_a)^\eta+ |\psi|_{\eta,\ell}\, d(x_a,y_a)^\eta
    \le (K+\lambda^{-\eta}|\psi|_{\eta,\ell})d(x,y)^\eta,
  \end{align*}
  and so $|P_m\psi_a|_{\eta,\ell} \le K+\lambda^{-\eta}|\psi|_{\eta,\ell}$.
  The result follows from~\eqref{eq-sum}.
\end{proof}

\begin{prop}\label{prop-diet}
  Let $\psi:Y\to(0,\infty)$.
  For each $t \in [0, e^{-|\psi|_{\eta,\ell}})$
  \[
    \Bigl| \psi - t \int_Y\psi\,dm \Bigr|_{\eta,\ell} 
    \leq \frac{|\psi|_{\eta,\ell}}{1-t e^{|\psi|_{\eta,\ell}}}.
  \]
\end{prop}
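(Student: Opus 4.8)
The plan is to reduce the estimate to a one-variable calculus bound via the mean value theorem. Write $c=\int_Y\psi\,dm$ and $L=|\psi|_{\eta,\ell}=|\log\psi|_\eta$. The first step is to check that $\psi-tc$ is strictly positive, so that $\log(\psi-tc)$ makes sense: by the lower bound in~\eqref{eq-bound}, $\psi\ge e^{-L}c$, hence $\psi-tc\ge(e^{-L}-t)c>0$ since $t<e^{-L}$. It then suffices to bound $|\log(\psi-tc)(x)-\log(\psi-tc)(y)|$ by $\frac{L}{1-te^{L}}\,d(x,y)^\eta$ for all $x\ne y$.

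Next, set $g=\log\psi$, so that $|g(x)-g(y)|\le L\,d(x,y)^\eta$ and, by~\eqref{eq-bound}, $g\ge\log c-L$ pointwise. Consider the auxiliary function $h(u)=\log(e^{u}-tc)$, which is defined and smooth for $u>\log(tc)$, with $h'(u)=e^{u}/(e^{u}-tc)=(1-tce^{-u})^{-1}$. Since $t<e^{-L}$ we have $\log c-L>\log(tc)$, so the entire segment between $g(x)$ and $g(y)$ lies in the domain of $h$, and $\log(\psi-tc)=h\circ g$. By the mean value theorem there is $\xi$ between $g(x)$ and $g(y)$ with $\log(\psi-tc)(x)-\log(\psi-tc)(y)=h'(\xi)\,(g(x)-g(y))$.

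Finally, estimate $h'(\xi)$. Since $\xi\ge\min\{g(x),g(y)\}\ge\log c-L$, we get $e^{-\xi}\le e^{L}/c$, so $tce^{-\xi}\le te^{L}<1$ and therefore $h'(\xi)=(1-tce^{-\xi})^{-1}\le(1-te^{L})^{-1}$. Combining with the Hölder bound on $g$ yields $|\log(\psi-tc)(x)-\log(\psi-tc)(y)|\le(1-te^{L})^{-1}L\,d(x,y)^\eta$, which is the asserted inequality. There is no genuine obstacle in this argument; the only points needing care are the positivity of $\psi-tc$ and the fact that the mean value theorem is applied on a segment contained in the domain of $h$, both of which follow immediately from~\eqref{eq-bound} and the hypothesis $t<e^{-|\psi|_{\eta,\ell}}$.
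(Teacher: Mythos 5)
Your proof is correct and follows essentially the same route as the paper: both pass to $\kappa=\log\psi$, differentiate $u\mapsto\log(e^u-t\int_Y\psi\,dm)$, and apply the mean value theorem, bounding the derivative by $(1-te^{|\psi|_{\eta,\ell}})^{-1}$ via the pointwise lower bound from~\eqref{eq-bound}. The only difference is cosmetic: you explicitly verify positivity of $\psi - t\int_Y\psi\,dm$ and that the MVT point $\xi$ lies in the domain, details the paper leaves implicit.
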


\begin{proof}
  Let $\kappa(y) = \log \psi(y)$. Note that 
  \[
    \frac{d}{d \kappa} \log \Bigl(e^\kappa - t \int_Y\psi\,dm\Bigr)
    = \frac{e^\kappa}{e^\kappa - t \int_Y\psi\,dm}
    = \frac{1}{1-t e^{-\kappa}\int_Y\psi\,dm }.
  \]
  By~\eqref{eq-bound},
  \[
   \frac{1}{1-t e^{-\kappa(y)}\int_Y\psi\,dm }
   =\frac{1}{1-t \psi(y)^{-1}\int_Y\psi\,dm }
   \leq \frac{1}{1-t e^{|\psi|_{\eta,\ell}}},
  \]
  for all $y\in Y$.
  Hence, by the mean value theorem, for $x,y\in Y$,
  \begin{align*}
    \Bigl|\log \Bigl( e^{\kappa(x)} - t \int_Y\psi\,dm\Bigr) - 
     \log \Bigl( e^{\kappa(y)} - t \int_Y\psi\,dm\Bigr) \Bigr|
    & \leq \frac{|\kappa(x) - \kappa(y)|}{1-t e^{|\psi|_{\eta,\ell}}}
     \leq \frac{|\psi|_{\eta,\ell} \, d(x,y)^\eta}{1-t e^{|\psi|_{\eta,\ell}}}.
  \end{align*}
  This completes the proof.
\end{proof}

Fix constants $R>0$ and $\xi \in (0,e^{-R})$, such that 
$ R (1-\xi e^R) \geq K + \lambda^{-\eta} R$.
(For example, choose $R$ and $\xi$ as in Remark~\ref{rmk-main}.)

\begin{prop} 
  \label{prop-R}
Let $\psi:Y\to(0,\infty)$ with
  $|\psi|_{\eta,\ell}\le R$. Then $|P_m \psi|_{\eta,\ell}\le R$.
\end{prop}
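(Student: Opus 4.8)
The plan is to deduce this immediately from the distortion bound already established. First I would apply Proposition~\ref{prop-dist} to the given function $\psi$, which gives $|P_m\psi|_{\eta,\ell}\le K+\lambda^{-\eta}|\psi|_{\eta,\ell}$. Since $\lambda^{-\eta}>0$, the right-hand side is increasing in $|\psi|_{\eta,\ell}$, so the hypothesis $|\psi|_{\eta,\ell}\le R$ yields $|P_m\psi|_{\eta,\ell}\le K+\lambda^{-\eta}R$. (Note $R<\infty$, so $\psi$ has finite log-H\"older constant and Proposition~\ref{prop-dist} genuinely applies.)

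It then remains to check that $K+\lambda^{-\eta}R\le R$, and for this I would invoke the two standing assumptions on the constants. Because $\xi\in(0,e^{-R})$ we have $0<\xi e^R<1$, and in particular $1-\xi e^R\le 1$, so $R(1-\xi e^R)\le R$. Combining this with the requirement $R(1-\xi e^R)\ge K+\lambda^{-\eta}R$ gives $K+\lambda^{-\eta}R\le R$. Chaining the estimates produces $|P_m\psi|_{\eta,\ell}\le R$, as claimed.

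I do not expect any genuine obstacle: the entire content sits in Proposition~\ref{prop-dist} together with the way the pair $(R,\xi)$ was chosen, so the argument is a two-line computation. The only point worth verifying separately is that such $R$ and $\xi$ exist at all, which is precisely the role of Remark~\ref{rmk-main}: with the choice $R=2K/(1-\lambda^{-\eta})$ and $\xi=\frac12 e^{-R}(1-\lambda^{-\eta})$ one computes $\xi e^R=\frac12(1-\lambda^{-\eta})$, whence $R(1-\xi e^R)=K+\lambda^{-\eta}R$, so the defining inequality holds (with equality, in fact). This Proposition is the statement that $P_m$ preserves the cone of strictly positive functions with log-H\"older constant at most $R$, and it is this invariance that will be iterated in the coupling argument.
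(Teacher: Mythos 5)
Your proof is correct and follows the paper's own argument exactly: apply Proposition~\ref{prop-dist} and then use the standing inequality $R(1-\xi e^R)\ge K+\lambda^{-\eta}R$ (together with $1-\xi e^R\le1$) to conclude $K+\lambda^{-\eta}R\le R$. The paper compresses the last step into a single ``$\le R$'', so you have simply made explicit the arithmetic that the authors left implicit.
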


\begin{proof}
By Proposition~\ref{prop-dist}, $|P_m\psi|_{\eta,\ell}\le K+\lambda^{-\eta} R\le R$.
\end{proof}

\begin{lemma} 
  \label{lem-coup}
  Let $\psi_1,\,\psi_2:Y\to(0,\infty)$ with 
$|\psi_1|_{\eta,\ell}\le R$, $|\psi_2|_{\eta,\ell}\le R$, and
  $\int_Y \psi_1 \, dm = \int_Y \psi_2 \, dm$.
  Let $\psi_j' = P_m \psi_j - \xi \int_Y\psi_j\,dm$ for $j=1,2$.
  Then
  \begin{itemize}
    \item[(a)] $|\psi_j'|_{\eta,\ell}\le R$ for $j=1,2$,
    \item[(b)] $P_m \psi_1 - P_m \psi_2 = \psi_1' - \psi_2'$,
    \item[(c)] $\int_Y \psi_1' \, dm = \int_Y \psi_2' \, dm = (1-\xi) \int_Y\psi_1\,dm$.
  \end{itemize}
\end{lemma}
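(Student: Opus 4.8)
The three claims are all straightforward consequences of the definitions and the earlier propositions, so the plan is essentially to unwind everything and check each item in turn. For item (a), I would start from $\psi_j' = P_m\psi_j - \xi\int_Y\psi_j\,dm$ and observe that, since $|\psi_j|_{\eta,\ell}\le R$, Proposition~\ref{prop-R} (or just Proposition~\ref{prop-dist}) gives $|P_m\psi_j|_{\eta,\ell}\le R$, and in particular, by~\eqref{eq-bound}, $P_m\psi_j \ge e^{-R}\int_Y P_m\psi_j\,dm = e^{-R}\int_Y\psi_j\,dm$ (using that $P_m$ preserves the integral). Hence subtracting $\xi\int_Y\psi_j\,dm$ with $\xi\in(0,e^{-R})$ keeps $\psi_j'$ strictly positive, so $|\psi_j'|_{\eta,\ell}$ is well-defined. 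To bound it, apply Proposition~\ref{prop-diet} to the positive function $\psi := P_m\psi_j$ with $t:=\xi$: the hypothesis $t < e^{-|\psi|_{\eta,\ell}}$ holds because $\xi < e^{-R}\le e^{-|P_m\psi_j|_{\eta,\ell}}$, and the conclusion is
\[
|\psi_j'|_{\eta,\ell} = \Bigl|P_m\psi_j - \xi\int_Y P_m\psi_j\,dm\Bigr|_{\eta,\ell}
\le \frac{|P_m\psi_j|_{\eta,\ell}}{1-\xi e^{|P_m\psi_j|_{\eta,\ell}}}
\le \frac{R}{1-\xi e^{R}},
\]
where in the last step I use that $x\mapsto x/(1-\xi e^x)$ is increasing on the relevant range together with $|P_m\psi_j|_{\eta,\ell}\le R$. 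Finally the defining inequality $R(1-\xi e^R)\ge K+\lambda^{-\eta}R$ — actually I only need $R(1-\xi e^R)\ge R$... no: one should use that $\int_Y P_m\psi_j\,dm=\int_Y\psi_j\,dm$, so $\psi_j' = P_m\psi_j - \xi\int_Y\psi_j\,dm$ really is the function Proposition~\ref{prop-diet} produces, and the bound $R/(1-\xi e^R)\le R$ fails in general — so instead I combine Proposition~\ref{prop-dist} ($|P_m\psi_j|_{\eta,\ell}\le K+\lambda^{-\eta}R$) with Proposition~\ref{prop-diet} to get $|\psi_j'|_{\eta,\ell}\le (K+\lambda^{-\eta}R)/(1-\xi e^R)\le R$, the last inequality being exactly the chosen relation $R(1-\xi e^R)\ge K+\lambda^{-\eta}R$ (after checking monotonicity lets us replace the numerator's exponent by $R$ in the denominator too). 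This is the only point requiring a little care.

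Item (b) is immediate: $\psi_1' - \psi_2' = P_m\psi_1 - P_m\psi_2 - \xi\bigl(\int_Y\psi_1\,dm - \int_Y\psi_2\,dm\bigr) = P_m\psi_1 - P_m\psi_2$ since $\int_Y\psi_1\,dm = \int_Y\psi_2\,dm$ by hypothesis. Item (c) is equally quick: the transfer operator preserves integrals, $\int_Y P_m\psi_j\,dm = \int_Y\psi_j\,dm$, so $\int_Y\psi_j'\,dm = \int_Y\psi_j\,dm - \xi\int_Y\psi_j\,dm = (1-\xi)\int_Y\psi_j\,dm$, and these agree for $j=1,2$ again because $\int_Y\psi_1\,dm = \int_Y\psi_2\,dm$.

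The only genuine obstacle is getting the constant in (a) right, i.e. verifying that Proposition~\ref{prop-diet} applies (positivity of $\psi_j'$, which forces the condition $\xi<e^{-|P_m\psi_j|_{\eta,\ell}}$, guaranteed by $\xi<e^{-R}$ and monotonicity of $\log\zeta$-type bounds) and that the resulting bound $(K+\lambda^{-\eta}R)/(1-\xi e^R)$ is $\le R$; the latter is precisely why $R$ and $\xi$ were chosen to satisfy $R(1-\xi e^R)\ge K+\lambda^{-\eta}R$, and the values in Remark~\ref{rmk-main} can be plugged in to confirm the relation holds there. Everything else is bookkeeping.
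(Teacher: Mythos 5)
Your proof is correct and follows essentially the same route as the paper: apply Proposition~\ref{prop-diet} with $\psi=P_m\psi_j$ and $t=\xi$, bound $|P_m\psi_j|_{\eta,\ell}\le K+\lambda^{-\eta}R\,(\le R)$ via Proposition~\ref{prop-dist}, use monotonicity of $x\mapsto x/(1-\xi e^x)$, and invoke the defining relation $R(1-\xi e^R)\ge K+\lambda^{-\eta}R$; parts (b) and (c) are the same one-line observations. The mid-proof detour (wondering whether $R(1-\xi e^R)\ge R$ suffices) is self-corrected and does not affect the final argument.
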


\begin{proof}
  By Propositions~\ref{prop-dist} and~\ref{prop-diet},
  \[
   |\psi_j'|_{\eta,\ell}= \Bigl| P_m \psi_j - \xi \int_Y \psi_j \, dm \Bigr|_{\eta,\ell}
    \leq
    \frac{|P_m \psi_j|_{\eta,\ell}}{1-\xi e^{|P_m \psi_j|_{\eta,\ell}}}
    \leq 
    \frac{K+\lambda^{-\eta} R}{1-\xi e^R} \leq R,
  \]
proving part~(a). Parts~(b) and~(c) are immediate.
\end{proof}

Now we are ready to prove Theorem~\ref{thm-UE}
  taking $C = 4e^R(1+R)$ and $\gamma = 1-\xi$.

\begin{pfof}{Theorem~\ref{thm-UE}}
  Assume first that $|\phi|_\eta \leq R$. Later we remove this restriction.

Since $\int_Y\phi\,dm=0$, there exists $x,y\in Y$ such that
$\phi(x)\le0\le\phi(y)$.  Hence it follows from the assumption $|\phi|_\eta\le R$ that $|\phi|_\infty\le R$.
  
  Write $\phi=\psi_0^+-\psi_0^-$, where $\psi_0^+ = 1+\max\{0, \phi\}$ and
  $\psi_0^- = 1-\min\{0, \phi\}$.
  Then $\psi_0^\pm:Y\to[1,\infty)$ and $\int_Y\psi_0^+\,d\mu=\int_Y\psi_0^-\,d\mu
  \leq 1+ |\phi|_\infty\le 1 + R$.
  For $x,y \in Y$,
  \[
    \left| \log \psi_0^\pm (x) - \log \psi_0^\pm(y) \right|
    \leq
    \left| \psi_0^\pm (x) - \psi_0^\pm(y) \right|
    \leq
    \left| \phi (x) - \phi(y) \right|,
  \]
  so $|\psi_0^\pm|_{\eta,\ell} \leq |\phi|_\eta \leq R$. 

  Define 
  \begin{align*}
    \psi_{n+1}^\pm &= P_m \psi_n^\pm - \xi \int_Y\psi_n^\pm\,dm,\quad n\ge0.
  \end{align*}
  By Lemma~\ref{lem-coup}(a), $|\psi_n^\pm|_{\eta,\ell}\le R$ for all $n\ge0$.
  By Lemma~\ref{lem-coup}(b,c),
  \begin{align}
    \label{eq-sxg}
    P_m^n \phi = P_m^n \psi_0^+ - P_m^n \psi_0^- 
    = \psi_n^+ - \psi_n^-,
  \end{align}
  and $\int_Y \psi_n^\pm \, dm = \gamma^n \int_Y \psi_0^\pm \, dm\le (1+R)\gamma^n$.
  By~\eqref{eq-bound}, 
  \begin{align}
    \label{eq-unap}
    \psi_n^\pm \leq e^R\! \int_Y \psi_n^\pm \, dm
    \leq e^R (1+R) \gamma^n.
\end{align}
  
  Next, we recall the inequality 
\begin{align} \label{eq-ineq}
|a - b| \leq \max\{a,b\} \, |\log a-\log b|,\;\text{for all $a,b>0$}.
\end{align}
  By~\eqref{eq-ineq} and the definition of $|\psi|_{\eta,\ell}$, for $x,y \in Y$,
  \begin{align*}
    \bigl|\psi_n^\pm(x) - \psi_n^\pm(y) \bigr|
    & \leq \max(\psi_n^\pm(x), \psi_n^\pm(y)) \,
      \bigl|\log \psi_n^\pm(x) - \log \psi_n^\pm(y)\bigr| \\
      & \leq e^R  (1+R) \gamma^n  |\psi_n^\pm|_{\eta,\ell}\, d(x,y)^\eta
       \leq e^R R (1+R) \gamma^n   d (x,y)^\eta.
  \end{align*}
  Hence, $|\psi_n^\pm|_\eta \leq e^R R (1+R) \gamma^n$.
  By \eqref{eq-sxg}, 
\begin{align} \label{eq-unap2}
| P_m^n\phi|_\eta \leq 2 e^R R (1+R) \gamma^n.
\end{align}

  Finally, we remove the restriction $| \phi|_\eta \leq R$. Note that
  $u=R |\phi|_\eta^{-1}  \phi$ satisfies
  $| u|_\eta \leq R$, and therefore it follows from~\eqref{eq-unap2} that
  \[
    | P_m^n \phi|_\eta = R^{-1}|\phi|_\eta\, | P_m^n u|_\eta
    \leq 2e^R (1+R) \gamma^n\,|\phi|_\eta.
  \]
Also, $\int_Y P_m^n\phi\,dm=0$, so $|P_m^n\phi|_\infty\le |P_m^n\phi|_\eta$.
Hence
  \[
    \|P_m^n \phi\|_\eta \le  2 |P_m^n \phi|_\eta 
    \leq 4e^R (1+R) \gamma^n\,|\phi|_\eta,
  \]
as required.
\end{pfof}

\begin{pfof}{Proposition~\ref{prop-inv}}
We construct an invariant probability measure $\mu\in\cM$ and show that
$|\frac{d\mu}{dm}|_{\eta,\ell}\le R$.

By Proposition~\ref{prop-R}, $|P_m^n 1|_{\eta,\ell}\le R$ for all $n\ge0$.  
In particular, it follows from~\eqref{eq-bound} that 
$|P_m1|_\infty\le e^R$.
By~\eqref{eq-ineq}, 
\[
  | P_m1|_\eta\le |P_m1|_\infty |P_m1|_{\eta,\ell}\le e^R R.
\]
  Also, $\int_Y (P_m1 - 1) \, dm = 0$, so by Theorem~\ref{thm-UE},
  $\|P_m^n (P_m1 - 1)\|_\eta \leq Ce^R R\gamma^n$.
  Hence we can define
  \[
    \rho = \lim_{n \to \infty} P_m^n 1 = 1 + \sum_{n=0}^{\infty} P_m^n (P_m1 - 1)\in C^\eta.
  \]
  It is immediate that $\int_Y \rho \, dm = 1$ and $P_m \rho = \rho$,
so $\rho$ is an invariant density.
Moreover, for $x,y\in Y$,
  \begin{align*}
    |\log\rho(x)-\log\rho(y)|=\lim_{n\to\infty}|\log(P_m^n1)(x)-\log(P_m^n1)(y)|\le Rd(x,y)^\eta,
  \end{align*}
so that $|\rho|_{\eta,\ell}\le R$.
\end{pfof}

\begin{rmk}
In this paper, we have restricted attention to expanding maps $F:Y\to Y$ satisfying the full branch condition $Fa=Y$ for all $a\in\alpha$.
This is a reasonable restriction for situations where the expanding maps are obtained by inducing nonuniformly expanding maps as in~\cite{KKMsub1}.
More generally, the restriction is justified by the family of examples $F_\delta:[0,1]\to[0,1]$ depicted in 
Figure~\ref{fig-example} below.   Note that each map preserves Lebesgue measure and is mixing.   Moreover,
we can take $\lambda=2$ and $K=0$ for all $\delta$.  Nevertheless, 
correlations decay arbitrarily slowly as $\delta\to0$.
(Explicit constants depending on $\delta$ can be computed from~\cite{Zweimuller04}.)

\begin{figure}[h]
\centering
\begin{tikzpicture}
  \newlength{\arrowsize}  
  \pgfarrowsdeclare{biggertip}{biggertip}{  
    \setlength{\arrowsize}{0.4pt}  
    \addtolength{\arrowsize}{.5\pgflinewidth}  
    \pgfarrowsrightextend{0}  
    \pgfarrowsleftextend{-5\arrowsize}  
  }{  
    \setlength{\arrowsize}{0.4pt}  
    \addtolength{\arrowsize}{.5\pgflinewidth}  
    \pgfpathmoveto{\pgfpoint{-5\arrowsize}{4\arrowsize}}  
    \pgfpathlineto{\pgfpointorigin}  
    \pgfpathlineto{\pgfpoint{-5\arrowsize}{-4\arrowsize}}  
    \pgfusepathqstroke  
  }  
  \begin{axis}[
    unit vector ratio*=1 1 1, 
    width=9cm, 
    axis lines = middle,
    axis line style={-biggertip},
    enlargelimits = true,
    xtick={0, 0.25, 0.42, 0.5, 0.58, 0.75, 1.0},
    xticklabels={$0$, $\frac{1}{4}$,, $\frac{1}{2}$,, $\frac{3}{4}$, $1$},
    ytick={0, 0.5, 1.0},
    yticklabels={0.0,$\frac{1}{2}$, $1$}
  ]

  \addplot[mark = none, color=gray, loosely dashed] coordinates {(0.0, 0.5) (1.0, 0.5)};
  \addplot[mark = none, color=gray, loosely dashed] coordinates {(0.0, 1.0) (1.0, 1.0)};
  \addplot[mark = none, color=gray, loosely dashed] coordinates {(0.25, 0.0) (0.25, 1.0)};
  \addplot[mark = none, color=gray, loosely dashed] coordinates {(0.42, 0.0) (0.42, 1.0)};
  \addplot[mark = none, color=gray, loosely dashed] coordinates {(0.5, 0.0) (0.5, 1.0)};
  \addplot[mark = none, color=gray, loosely dashed] coordinates {(0.58, 0.0) (0.58, 1.0)};
  \addplot[mark = none, color=gray, loosely dashed] coordinates {(0.75, 0.0) (0.75, 1.0)};
  \addplot[mark = none, color=gray, loosely dashed] coordinates {(1.0, 0.0) (1.0, 1.0)};
 
  \draw[biggertip-biggertip] (axis cs:0.42,0.2) -- (axis cs:0.58,0.2);
  \node[fill=white,above] at (axis cs:0.5,0.2) {$\delta$};

  \addplot[mark = none, ultra thick] coordinates {(0, 0) (0.25, 0.5)};
  \addplot[mark = none, ultra thick] coordinates {(0.25, 0) (0.42, 0.5)};
  \addplot[mark = none, ultra thick] coordinates {(0.42, 1.0) (0.58, 0.0)};
  \addplot[mark = none, ultra thick] coordinates {(0.58, 0.5) (0.75, 1.0)};
  \addplot[mark = none, ultra thick] coordinates {(0.75, 0.5) (1.0, 1.0)};
 
  \node[below left] at (axis cs:0,0) {$0$};
\end{axis}
\end{tikzpicture}
\caption{A family of uniformly expanding maps $F_\delta:[0,1]\to[0,1]$ with $\lambda=2$ and $K=0$ but with arbitrarily slow decay of correlations.} \label{fig-example}
\end{figure}
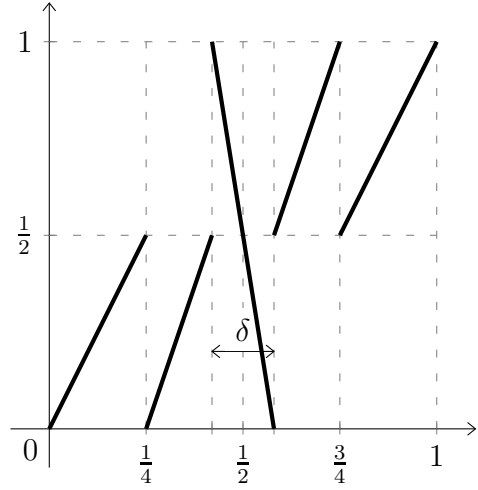
\end{rmk}

\section{Proof for nonuniformly expanding maps}
\label{sec-NUE}

In this section, we prove Theorems~\ref{thm-NUE} and~\ref{thm-NUE2}.
The coupling technique from probability theory, on which our proofs are based, was introduced to dynamical systems by Young~\cite{Young99}, and has since been used in various ways by numerous authors, including~\cite{BL,CD,Zweimuller04}. Our proof is in many ways similar to those in the above works, but is also different: to obtain explicit control on various constants,  we developed a new (to the best of our knowledge) construction of coupling and the method to apply it.

\subsection{Outline of the proof}
\label{sec-outline}

Let \(\Delta_\ell = \{(y, k) \in \Delta : k=\ell \}\) denote the \(\ell\)-th level of the tower. 
Our strategy is to construct a countable probability space \((W, \bP)\) and a random
variable \(h : W \to \N\) such that every sufficiently regular
observable \(\psi : \Delta \to [0, \infty)\) with
\(\int_\Delta \psi \, dm_\Delta = 1\) can be decomposed into
a series
  $\psi = \sum_{w \in W} \psi_w$
where \(\psi_w : \Delta \to [0, \infty) \) are such that
\(\int_\Delta \psi_w \, dm_\Delta = \bP(w)\) and 
\(L^{h(w)} \psi_w = \bP(w) \bar{\tau} 1_{\Delta_0}\).

Now let \(\phi : \Delta \to \R\) and  suppose that  
\(L^N\phi = C(\psi - \psi')\) where \(\psi\) and \(\psi'\)
can be decomposed as above and $C>0$, $N\in\N$ are constants.
We have
\(L^{h(w)} \psi_w = L^{h(w)} \psi'_w\), and so
\(
  L^n (\psi_w - \psi'_w)
  = 0
\)
whenever \(n \geq h(w)\).
Therefore
\begin{align*}
  \int_\Delta |L^{N+n} \phi| \, dm_\Delta
  & \leq C\sum_{w \in W : h(w)>n} \int_\Delta (L^n \psi_w + L^n \psi'_w) \, dm_\Delta
  \\ & = C\sum_{w \in W : h(w)>n} \int_\Delta (\psi_w + \psi'_w) \, dm_\Delta
  = 2C \bP(h > n).
\end{align*}
In this way,  decay rates for \(L^n \phi\) reduce to tail estimates for $h$.

\subsection{Recurrence to \(\Delta_0\)}

Given $\psi : \Delta \to [0,\infty)$, define
\[
  |\psi|_{\eta,\ell} = \sup_{n\geq 0} \sup_{(y,n)\neq (y',n) \in \Delta_n} 
  \frac{|\log \psi(y,n)- \log \psi(y',n)|}{d(y,y')^\eta},
\]
where \(\log 0 = -\infty\) and \(\log 0 - \log 0 = 0 \).

As in Section~\ref{sec-UE}, we
fix constants $R>0$ and $\xi \in (0,e^{-R})$, such that
$ R (1-\xi e^R) \geq K + \lambda^{-\eta} R$.
(For example, choose $R$ and $\xi$ as in Remark~\ref{rmk-main}.)
Using notation from Section~\ref{sec-UE},
$(L\phi)(y,\ell)=\begin{cases} \phi(y,\ell-1) & \ell\ge1
\\ \sum_{a\in\alpha}\zeta(y_a)\phi(y_a,\tau(a)-1) & \ell=0\end{cases}$.

\begin{prop}
  \label{prop-duude}
  Let $\psi:\Delta\to[0,\infty)$ with \(|\psi|_{\eta, \ell} \leq R\). Then
  \begin{enumerate}[label=(\alph*)]
    \item
      \( \BIG
        e^{-R} \bar{\tau} \int_{\Delta_0} \psi \, dm_\Delta  
        \leq \psi \, 1_{\Delta_0} 
        \leq e^{ R} \bar{\tau} \int_{\Delta_0} \psi \, dm_\Delta .
      \)
    \item \(|L \psi|_{\eta, \ell} \leq R\).
    \item If \(t \in [0, \xi]\), then
      \(
        \psi' =
        L \psi - t \, \bar{\tau} \int_{\Delta_0} L \psi \, dm_\Delta \, 1_{\Delta_0}
      \)
      is nonnegative and \(|\psi'|_{\eta, \ell} \leq R\).
  \end{enumerate}
\end{prop}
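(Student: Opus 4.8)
The plan is to reduce all three assertions to the single‑level estimates of Section~\ref{sec-UE}, being careful about which metric space each restricted function lives on. Two observations are used throughout. First, $\Delta_0=Y\times\{0\}$ may be identified isometrically with $(Y,d)$, and under this identification $\bar\tau\int_{\Delta_0}\psi\,dm_\Delta=\int_Y\psi(\cdot,0)\,dm$, so that the quantity $\bar\tau\int_{\Delta_0}(\cdot)\,dm_\Delta$ plays exactly the role of the mean $\int_Y(\cdot)\,dm$ in~\eqref{eq-bound}--\eqref{eq-sum}. Second, with the conventions $\log 0=-\infty$, $\log 0-\log 0=0$, the hypothesis $|\psi|_{\eta,\ell}\le R<\infty$ forces the restriction of $\psi$ to each level $\Delta_n$ to be either identically $0$ or strictly positive; in the former case all assertions on that level are trivial, so I will assume positivity wherever it is needed. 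Given this, part~(a) is immediate from the argument behind~\eqref{eq-bound}: for $(y,0),(y',0)\in\Delta_0$ one has $\psi(y,0)\le\psi(y',0)\,e^{|\psi|_{\eta,\ell}d(y,y')^\eta}\le\psi(y',0)\,e^{R}$ using $\diam Y\le1$, and integrating the variable $y'$ against $m$ gives the upper bound; the lower bound is symmetric.

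For part~(b) I treat the levels separately. For $\ell\ge1$ we have $(L\psi)(y,\ell)=\psi(y,\ell-1)$, and if $(y,\ell),(y',\ell)\in\Delta_\ell$ then $(y,\ell-1),(y',\ell-1)\in\Delta_{\ell-1}$ with $d_\Delta((y,\ell),(y',\ell))=d(y,y')$; hence the level‑$\ell$ log‑H\"older constant of $L\psi$ is bounded by that of $\psi$ on $\Delta_{\ell-1}$, i.e.\ by $R$. For $\ell=0$ write $(L\psi)(\cdot,0)=\sum_{a\in\alpha}P_m\psi_a$, where $\psi_a=1_a\cdot\psi(\cdot,\tau(a)-1)$ and $\psi(\cdot,\tau(a)-1)$ is viewed on $a$, which lies inside the $Y$‑projection of $\Delta_{\tau(a)-1}$; thus $\psi(\cdot,\tau(a)-1)$ has log‑H\"older constant at most $R$ on $a$, and the computation of Proposition~\ref{prop-dist} gives $|P_m\psi_a|_{\eta,\ell}\le K+\lambda^{-\eta}R$. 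After discarding the terms for which $\psi$ vanishes on the relevant level, summing with~\eqref{eq-sum} yields $|(L\psi)(\cdot,0)|_{\eta,\ell}\le K+\lambda^{-\eta}R\le R$, the last inequality following from $1-\xi e^R<1$ in the defining relation $R(1-\xi e^R)\ge K+\lambda^{-\eta}R$. Combining the two cases gives $|L\psi|_{\eta,\ell}\le R$; I retain the sharper bound $K+\lambda^{-\eta}R$ on $\Delta_0$ for use in part~(c).

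For part~(c), off $\Delta_0$ we have $\psi'=L\psi\ge0$ with log‑H\"older constant $\le R$ by~(b). On $\Delta_0$, set $g=(L\psi)(\cdot,0)$ as a function on $Y$, so that (by the identification from~(a)) $\psi'|_{\Delta_0}=g-t\int_Y g\,dm$. Applying~(a) to $L\psi$ gives $g\ge e^{-R}\int_Y g\,dm\ge t\int_Y g\,dm$, since $t\le\xi<e^{-R}$ and $\int_Y g\,dm\ge0$, which proves nonnegativity. For the regularity bound this is exactly the situation of Lemma~\ref{lem-coup}(a): Proposition~\ref{prop-diet} applies because $t\le\xi<e^{-R}\le e^{-|g|_{\eta,\ell}}$, and gives $\bigl|g-t\int_Y g\,dm\bigr|_{\eta,\ell}\le\frac{|g|_{\eta,\ell}}{1-t\,e^{|g|_{\eta,\ell}}}\le\frac{K+\lambda^{-\eta}R}{1-\xi e^{R}}\le R$, where the middle inequality uses $|g|_{\eta,\ell}\le K+\lambda^{-\eta}R$ together with $|g|_{\eta,\ell}\le R$ and $t\le\xi$, and the last inequality is the defining relation for $R$ and $\xi$. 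Taking the maximum over levels (the levels $\ell\ge1$ contributing at most $R$ since $\psi'=L\psi$ there) gives $|\psi'|_{\eta,\ell}\le R$.

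The argument is essentially bookkeeping. The one point needing care is routing the sharper estimate $|(L\psi)(\cdot,0)|_{\eta,\ell}\le K+\lambda^{-\eta}R$ from~(b) into the application of Proposition~\ref{prop-diet} in~(c)---the weaker bound $\le R$ would not close the estimate $\frac{(\cdot)}{1-\xi e^{R}}\le R$---together with consistently disposing of the degenerate cases in which $\psi$ vanishes identically on some level via the $\log 0$ conventions. I do not anticipate any genuine obstacle beyond this.
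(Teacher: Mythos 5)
Your proof is correct and follows essentially the same route as the paper's: part~(a) is the level-$0$ incarnation of~\eqref{eq-bound}, part~(b) separates levels and reruns the Proposition~\ref{prop-dist} computation on $\Delta_0$, and part~(c) combines the sharper numerator bound $K+\lambda^{-\eta}R$ from~(b) with Proposition~\ref{prop-diet} and the defining inequality $R(1-\xi e^R)\ge K+\lambda^{-\eta}R$ exactly as the authors do. The only differences are presentational --- you spell out the identification $\Delta_0\cong(Y,d)$ and the $\log 0$ conventions more explicitly --- and these are harmless.
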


\begin{proof}
(a) This is the counterpart of~\eqref{eq-bound}.  

\noindent(b)  Let $(y,\ell),(y',\ell)\in\Delta_\ell$.
If $\ell\ge1$, then it is immediate that
$|\log (L\psi)(y,\ell)-\log (L\psi)(y',\ell)| \le R d(y,y')^\eta$.
The same calculation as in Proposition~\ref{prop-dist} shows that
\[
|\log (L\psi)(y,0)-\log (L\psi)(y,0)|\le 
(K+\lambda^{-\eta}R)d(y,y')^\eta
\le Rd(y,y')^\eta.
\]
\noindent(c)  It follows from (b) that $|L\psi|_{\eta,\ell}\le R$. Hence, by (a),
$\psi'\ge0$.  As in part~(b), it is immediate that 
$|\log \psi'(y,\ell)-\log\psi'(y',\ell)|\le Rd(y,y')^\eta$ for $\ell\ge1$.
Also, $\psi'(y,0)=\chi(y)-t\int_Y\chi dm$ where $\chi:Y\to[0,\infty)$
is given by $\chi(y)=(L\psi)(y,0)$, so it follows from
Proposition~\ref{prop-diet} that 
$|\log \psi'(y,0)-\log\psi'(y',0)|\le 
(K+\lambda^{-\eta}R)(1-te^R)^{-1}d(y,y')^\eta\le Rd(y,y')^\eta$.~
\end{proof}

Define $N=N_1+N_2$ where
\begin{align*}
  & N_1 = \max \{I_k^2\}, \quad
  N_2  = \min \bigl\{ n \geq 1 : m_\Delta (\cup_{\ell \geq n} \Delta_\ell) 
    \leq {\SMALL\frac12}e^{-R} \bar{\tau}^{-1} \bigr\}.
\end{align*}
Let \(\cA\) be the set of observables \(\psi : \Delta \to [0,\infty)\)
such that 
\(|\psi|_\infty \leq e^R \bar{\tau} \int_\Delta \psi \, dm_\Delta\) and
\(|\psi|_{\eta, \ell} \leq R\).
Define \(\cB = L^N \cA\).

\begin{cor} \label{cor-A}
(a) If \(\psi : \Delta \to [0,\infty)\) is supported on
  \(\Delta_0\), and \(|\psi|_{\eta, \ell} \leq R\), then
  \(\psi \in \cA\). 

 \noindent (b) If \(\psi, \psi' \in \cA\) (or \(\cB\)) and \(t \geq 0\), then
  \(L \psi\),
 \(\psi + \psi'\) and \(t \psi\) belong in \(\cA\)
  (or \(\cB\)).  In particular, $\cB\subset\cA$.
\end{cor}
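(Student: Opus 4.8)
The plan is to verify the two defining conditions of $\cA$ — the pointwise bound $|\psi|_\infty\le e^R\bar\tau\int_\Delta\psi\,dm_\Delta$ and the distortion bound $|\psi|_{\eta,\ell}\le R$ — in each of the asserted cases, feeding off Proposition~\ref{prop-duude} for the behaviour under $L$ and off the level-by-level analogue of~\eqref{eq-sum} for finite sums. For part~(a), if $\psi$ is supported on $\Delta_0$ then $\psi=\psi\,1_{\Delta_0}$ and $\int_{\Delta_0}\psi\,dm_\Delta=\int_\Delta\psi\,dm_\Delta$, so Proposition~\ref{prop-duude}(a) gives $|\psi|_\infty=|\psi\,1_{\Delta_0}|_\infty\le e^R\bar\tau\int_\Delta\psi\,dm_\Delta$; combined with the hypothesis $|\psi|_{\eta,\ell}\le R$ this is exactly the statement $\psi\in\cA$.

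For part~(b) with $\cA$ in place of $\cB$, I would first record that $L$ preserves integrals, $\int_\Delta L\psi\,dm_\Delta=\int_\Delta\psi\,dm_\Delta$, straight from the defining property of the transfer operator. Given $\psi\in\cA$, Proposition~\ref{prop-duude}(b) yields $|L\psi|_{\eta,\ell}\le R$; on the levels $\ell\ge1$ we have $(L\psi)(y,\ell)=\psi(y,\ell-1)$, so $|L\psi|$ there is at most $|\psi|_\infty\le e^R\bar\tau\int_\Delta\psi\,dm_\Delta=e^R\bar\tau\int_\Delta L\psi\,dm_\Delta$, while on $\Delta_0$ Proposition~\ref{prop-duude}(a) applied to $L\psi$ bounds $|L\psi\,1_{\Delta_0}|$ by $e^R\bar\tau\int_{\Delta_0}L\psi\,dm_\Delta\le e^R\bar\tau\int_\Delta L\psi\,dm_\Delta$; hence $L\psi\in\cA$. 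Scalar multiples are immediate, since $|t\psi|_{\eta,\ell}=|\psi|_{\eta,\ell}$ and both the $|\cdot|_\infty$ bound and the integral scale by $t$. For sums, $|\psi+\psi'|_\infty\le|\psi|_\infty+|\psi'|_\infty\le e^R\bar\tau\int_\Delta(\psi+\psi')\,dm_\Delta$, and $|\psi+\psi'|_{\eta,\ell}\le\max\{|\psi|_{\eta,\ell},|\psi'|_{\eta,\ell}\}\le R$ by the level-by-level version of~\eqref{eq-sum}. The assertions for $\cB=L^N\cA$ then follow formally: writing $\psi=L^N\chi$, $\psi'=L^N\chi'$ with $\chi,\chi'\in\cA$, one has $L\psi=L^N(L\chi)$, $\psi+\psi'=L^N(\chi+\chi')$, $t\psi=L^N(t\chi)$, and $L\chi,\chi+\chi',t\chi\in\cA$ by what was just shown. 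Iterating $L\cA\subset\cA$ gives $L^N\cA\subset\cA$, i.e.\ $\cB\subset\cA$.

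There is no genuine obstacle here; the only point deserving a sentence of care is the tower analogue of~\eqref{eq-sum} — that adding nonnegative observables does not increase $|\cdot|_{\eta,\ell}$ — which I would prove on each level $\Delta_n$ exactly as the corresponding estimate in Section~\ref{sec-UE}, noting that the conventions $\log 0=-\infty$ and $\log 0-\log 0=0$ force each of $\psi,\psi'$ to be either identically zero or everywhere positive on a given level once its distortion is finite, so that the estimate applies on each level without degenerate exceptions.
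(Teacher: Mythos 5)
Your proposal is correct and follows essentially the same route as the paper: part (a) is an immediate application of Proposition~\ref{prop-duude}(a), and for part (b) the only nontrivial verification is that $L$ maps $\cA$ to $\cA$, done by splitting the $|\cdot|_\infty$ bound between $\Delta_0$ and $\Delta\setminus\Delta_0$ exactly as you do; the paper simply declares the sum, scalar multiple, and $\cB$ cases ``immediate'' where you spell them out. Your remark about the $\log 0$ conventions forcing each level to be either identically zero or everywhere positive is a correct (and slightly more careful) justification of the level-by-level analogue of~\eqref{eq-sum}, which the paper uses implicitly.
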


\begin{proof}
Part~(a) follows from Proposition~\ref{prop-duude}(a).
    Next, let \(\psi \in \cA\).  We show that $L\psi\in\cA$; the remaining statements in part~(b) are immediate.
    By Proposition~\ref{prop-duude}(b),
    \(|L \psi|_{\eta, \ell} \leq R\). Also, using the definition
    of \(\cA\) and Proposition~\ref{prop-duude}(a),
    \begin{align*}
      |1_{\Delta \setminus \Delta_0} L \psi |_{\infty}
      &\leq |\psi|_\infty
      \leq e^R \bar{\tau} \int_\Delta \psi \, dm_\Delta
      = e^R \bar{\tau} \int_\Delta L \psi \, dm_\Delta
      , \quad \text{and}
      \\
      |1_{\Delta_0} L \psi|_\infty
      &\leq e^R \bar{\tau} \int_{\Delta_0} L \psi \, dm_\Delta
      \leq e^R \bar{\tau} \int_\Delta L \psi \, dm_\Delta.
    \end{align*}
    Hence 
    \(
      |L \psi|_\infty
      \leq e^R \bar{\tau} \int_\Delta L \psi \, dm_\Delta
    \), so \(L \psi \in \cA\).  
\end{proof}

\begin{prop}
  \label{prop-lllol}
  If \(\psi \in \cA\), then 
    $\max_{0\le j\le N_2}\int_{\Delta_0} L^j \psi \, dm_\Delta 
    \geq \frac{1}{2} e^{-R} \bar{\tau}^{-1}
    \int_\Delta \psi \, dm_\Delta$.
\end{prop}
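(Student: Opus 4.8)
The plan is to bound from below the \emph{total} mass that visits $\Delta_0$ during the times $0,1,\dots,N_2$, and then apply the pigeonhole principle. Write $P=\int_\Delta\psi\,dm_\Delta$ and $p_j=\int_{\Delta_0}L^j\psi\,dm_\Delta$, so that $\max_{0\le j\le N_2}p_j\ge(N_2+1)^{-1}\sum_{j=0}^{N_2}p_j$; hence it suffices to prove $\sum_{j=0}^{N_2}p_j\ge\tfrac12 e^{-R}\bar\tau^{-1}(N_2+1)P$.

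I would first record two facts. \emph{(a)} For $j\ge1$, since $f^j$ maps the diagonal slice $D_j=\{(z,k)\in\Delta:\tau(z)-k=j\}$ into $\Delta_0$ and $L$ is positive, $p_j\ge g_j:=\int_{D_j}\psi\,dm_\Delta$. The sets $D_j$ ($j\ge1$) partition $\Delta$, and $m_\Delta\{(z,k):\tau(z)-k>N_2\}=m_\Delta(\bigcup_{\ell\ge N_2}\Delta_\ell)\le\tfrac12 e^{-R}\bar\tau^{-1}$ by the definition of $N_2$; since $\psi\in\cA$ gives $|\psi|_\infty\le e^R\bar\tau P$, we obtain $\sum_{j=1}^{N_2}g_j=P-\int_\Delta\psi\,1_{\{\tau-k>N_2\}}\,dm_\Delta\ge\tfrac12 P$. \emph{(b)} As $L^j\psi\in\cA$ by Corollary~\ref{cor-A}, Proposition~\ref{prop-duude}(a) gives $L^j\psi\ge e^{-R}\bar\tau p_j\,1_{\Delta_0}$; restricting to the columns of height $h$, a mass $\ge e^{-R}m(\tau=h)\,p_j$ of $L^j\psi$ sits over $\{(y,0):\tau(y)=h\}$ and returns to $\Delta_0$ exactly $h$ steps later, whence
\[
p_{j+h}\ \ge\ e^{-R}\,m(\tau=h)\,p_j\qquad(j\ge0,\ h\ge1).
\]

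The crux is to combine \emph{(a)} and \emph{(b)}: the naive pigeonhole bound $\max_j p_j\ge P/(2N_2)$ coming from \emph{(a)} alone is not enough, because $N_2$ may be far larger than $e^R\bar\tau$ when $\tau$ has heavy tails. The idea is that a chunk of mass first reaching $\Delta_0$ at some time $r\le N_2$ contributes to $p_r$ not merely $g_r$, but through \emph{(b)} and the finiteness of $\bar\tau=\sum_h h\,m(\tau=h)$ it keeps revisiting $\Delta_0$ — on the order of $(N_2-r)/\bar\tau$ times — before time $N_2$; summing the corresponding lower bounds on the $p_j$ (equivalently, estimating $\int_{\Delta_0}G\,dm_\Delta$ for $G=\sum_{j=0}^{N_2}L^j\psi$, which again lies in $\cA$ and satisfies $\int_\Delta G\,dm_\Delta=(N_2+1)P$) should give $\sum_{j=0}^{N_2}p_j\ge\tfrac12 e^{-R}\bar\tau^{-1}(N_2+1)P$ once the distortion factor $e^{-R}$ is carried through. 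I expect the bookkeeping in this last step to be the main obstacle: the near-constancy in \emph{(b)} is accurate only up to $e^{\pm R}$, so the repeated-return series does not telescope cleanly, and it is precisely here that the threshold $\tfrac12 e^{-R}\bar\tau^{-1}$ built into the definition of $N_2$ is consumed.
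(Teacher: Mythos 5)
Your preliminary facts are correct: \emph{(a)} does mirror the first half of the paper's proof, since $\sum_{j=1}^{N_2}g_j\ge\frac12\int_\Delta\psi\,dm_\Delta$ is essentially the paper's estimate $\int_{\bigcup_{\ell=0}^{N_2}\Delta_\ell}L^{N_2}\psi\,dm_\Delta\ge\frac12\int_\Delta\psi\,dm_\Delta$ (both consume the $\tfrac12 e^{-R}\bar\tau^{-1}$ built into the definition of $N_2$), and \emph{(b)} is a correct consequence of Proposition~\ref{prop-duude}(a). The problem is what comes after. You reduce the statement, via pigeonhole, to the \emph{strictly stronger} claim $\sum_{j=0}^{N_2}p_j\ge\tfrac12 e^{-R}\bar\tau^{-1}(N_2+1)\int_\Delta\psi\,dm_\Delta$, and then, as you yourself acknowledge, you do not prove it: the repeated-returns bookkeeping is left as a sketch, and it is not at all clear that the threshold constant survives. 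A lower bound on $\max_j p_j$ does not require a linear-in-$N_2$ lower bound on the sum, and forcing one is where the approach runs into trouble.

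The paper avoids the pigeonhole route entirely by taking a \emph{weighted} average instead of a uniform one. Having shown $\int_{\bigcup_{\ell=0}^{N_2}\Delta_\ell}L^{N_2}\psi\,dm_\Delta\ge\frac12\int_\Delta\psi\,dm_\Delta$, one observes that for $\ell\le N_2$ the identity $(L^{N_2}\psi)(y,\ell)=(L^{N_2-\ell}\psi)(y,0)$ holds, and Proposition~\ref{prop-duude}(a,b) gives $|L^{j}\psi\,1_{\Delta_0}|_\infty\le e^R\bar\tau\,p_{j}$ for every $j$. Hence
\[
\frac12\int_\Delta\psi\,dm_\Delta
\le\sum_{\ell=0}^{N_2}m_\Delta(\Delta_\ell)\,|L^{N_2-\ell}\psi\,1_{\Delta_0}|_\infty
\le e^R\bar\tau\max_{0\le j\le N_2}p_j,
\]
because $\sum_\ell m_\Delta(\Delta_\ell)\le 1$. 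The weights $m_\Delta(\Delta_\ell)$ sum to at most $1$ and need not be uniform, so one gets a bound on the maximum without ever needing $\sum_j p_j$ to be large compared with $N_2$. This is the key step your argument is missing; you should replace the pigeonhole reduction and the unfinished repeated-returns estimate by this direct weighted-average bound.
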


\begin{proof}
It follows from the definition of $N_2$ and $\cA$ that
$m_\Delta(\cup_{\ell=N_2+1}^\infty \Delta_\ell)|\psi|_\infty\le \frac12\int_\Delta \psi\,dm_\Delta$.   Hence
\begin{align*}
\int_\Delta \psi\,dm_\Delta
& =\int_{\bigcup_{\ell=0}^{N_2}\Delta_\ell} L^{N_2}\psi\,dm_\Delta
+\int_{\bigcup_{\ell= N_2+1}^\infty\Delta_\ell} L^{N_2}\psi\,dm_\Delta
\\ & \le \int_{\bigcup_{\ell=0}^{N_2}\Delta_\ell} L^{N_2}\psi\,dm_\Delta
+\frac12 \int_\Delta \psi\,dm_\Delta,
\end{align*}
so $\int_{\bigcup_{\ell=0}^{N_2} \Delta_\ell} L^{N_2} \psi \, dm_\Delta
    \geq \frac{1}{2}\int_\Delta \psi \, dm_\Delta$.

  Next, if \(\ell \leq N_2\) then \((L^{N_2} \psi)(y, \ell) = (L^{N_2-\ell} \psi)(y, 0)\), and so
$\int_{\Delta_\ell}L^{N_2}\psi\,dm_\Delta\le m_\Delta(\Delta_\ell)|L^{N_2-\ell}\psi\,1_{\Delta_0}|_\infty
\le m_\Delta(\Delta_\ell)\max_{0\le j\le N_2}|L^j\psi\,1_{\Delta_0}|_\infty$.
  Hence, by Proposition~\ref{prop-duude}(a,b),
  \[
    \int_{\bigcup_{\ell=0}^{N_2} \Delta_\ell} L^{N_2} \psi \, dm_\Delta
    \leq \max_{0 \leq j \leq N_2} |L^j \psi\,1_{\Delta_0}|_\infty
    \leq e^R \bar{\tau} \max_{0 \leq j \leq N_2} \int_{\Delta_0} L^j \psi \, dm_\Delta.
  \]
  The result follows.
\end{proof}

\begin{prop}
  \label{prop-kkkok}
  If \(|\psi|_{\eta, \ell} \leq R\), then 
  \(
    \int_{\Delta_0} L^n \psi \, dm_\Delta
    \geq (e^{-R} \delta)^n \int_{\Delta_0} \psi \, dm_\Delta
  \)
  for all \(n \geq N_1\).
\end{prop}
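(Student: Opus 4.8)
The plan is to prove a one-step estimate, valid for each of the finitely many integers $I_k$ and for every $\psi:\Delta\to[0,\infty)$ with $|\psi|_{\eta,\ell}\le R$,
\[
\int_{\Delta_0}L^{I_k}\psi\,dm_\Delta\ \ge\ e^{-R}\delta\int_{\Delta_0}\psi\,dm_\Delta,
\]
and then to iterate it. The point is that every integer $n\ge N_1=\max\{I_k^2\}$ is a nonnegative integer combination of the $I_k$ --- the standard elementary fact about numerical semigroups, which is exactly why the threshold is $\max\{I_k^2\}$ --- so $L^n$ factors as $L^{I_{k_1}}\circ\cdots\circ L^{I_{k_r}}$ with $I_{k_1}+\cdots+I_{k_r}=n$, and repeated application of the one-step estimate does the rest.

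For the one-step estimate I would use positivity of $L$ and of $\psi$ to discard all of the mass except the piece sitting on the base over the set $\{\tau=I_k\}$: writing $\chi=\psi\,1_{\{\tau=I_k\}\times\{0\}}$, we have $L^{I_k}\psi\ge L^{I_k}\chi\ge 0$. Now $\chi$ is supported on the base over a union of partition elements $a$ with $\tau(a)=I_k$; from the explicit formula for $L$, this function travels up those columns (whose height is exactly $I_k$ --- this is the only role of the construction here) and at step $I_k$ is pushed by the full branches $F|_a$ onto $\Delta_0$, so $L^{I_k}\chi$ is supported on $\Delta_0$. Hence $\int_{\Delta_0}L^{I_k}\psi\,dm_\Delta\ge\int_\Delta L^{I_k}\chi\,dm_\Delta=\int_\Delta\chi\,dm_\Delta=\int_{\{\tau=I_k\}\times\{0\}}\psi\,dm_\Delta$, using that $L$ preserves $m_\Delta$-integrals. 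Finally Proposition~\ref{prop-duude}(a) gives the pointwise bound $\psi\,1_{\Delta_0}\ge e^{-R}\bar{\tau}\bigl(\int_{\Delta_0}\psi\,dm_\Delta\bigr)1_{\Delta_0}$; integrating this over $\{\tau=I_k\}\times\{0\}$, whose $m_\Delta$-measure is $\bar{\tau}^{-1}m(\tau=I_k)\ge\bar{\tau}^{-1}\delta$, yields the one-step estimate. (If $\int_{\Delta_0}\psi\,dm_\Delta=0$ everything is trivial by positivity.)

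To finish, fix $n\ge N_1$, write $n=\sum_k c_kI_k$ with integers $c_k\ge 0$, and set $r=\sum_k c_k$, so $r\le n$ since each $I_k\ge 1$. Applying the one-step estimate $r$ times to the factorisation $L^n\psi=L^{I_{k_1}}\cdots L^{I_{k_r}}\psi$ --- legitimate because each intermediate function still satisfies $|\cdot|_{\eta,\ell}\le R$ by Proposition~\ref{prop-duude}(b) --- gives $\int_{\Delta_0}L^n\psi\,dm_\Delta\ge(e^{-R}\delta)^r\int_{\Delta_0}\psi\,dm_\Delta$. Since $e^{-R}\delta<1$ (as $R>0$ and $\delta\le m(Y)=1$) and $r\le n$, we have $(e^{-R}\delta)^r\ge(e^{-R}\delta)^n$, which is the claim. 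The argument is essentially bookkeeping; the only mild subtleties are checking that $\chi$ returns to $\Delta_0$ after exactly $I_k$ steps with its total mass intact (handled by positivity together with $\int_\Delta L\,\cdot\,dm_\Delta=\int_\Delta\,\cdot\,dm_\Delta$, so no Jacobian change of variables is needed) and the appeal to the numerical-semigroup fact; neither is a genuine obstacle.
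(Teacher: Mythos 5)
Your proof is correct and follows essentially the same route as the paper: establish the one-step bound $\int_{\Delta_0}L^{I_k}\psi\,dm_\Delta\ge e^{-R}\delta\int_{\Delta_0}\psi\,dm_\Delta$ from Proposition~\ref{prop-duude}(a) and the measure bound $m_\Delta(\{\tau=I_k\}\times\{0\})\ge\delta/\bar\tau$, invoke the numerical-semigroup fact that every $n\ge\max\{I_k^2\}$ is a nonnegative integer combination of the $I_k$, and iterate using Proposition~\ref{prop-duude}(b) to keep $|\cdot|_{\eta,\ell}\le R$ along the way. Your device of cutting off to $\chi=\psi\,1_{\{\tau=I_k\}\times\{0\}}$ and tracking the support of $L^{I_k}\chi$ is simply an unwound form of the paper's duality step $\int_{\Delta_0}L^{I_k}\psi\,dm_\Delta=\int_\Delta\psi\,1_{\Delta_0}\circ f^{I_k}\,dm_\Delta\ge\int_{\Delta_0\cap f^{-I_k}\Delta_0}\psi\,dm_\Delta$; they give the identical bound.
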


\begin{proof}
  By Proposition~\ref{prop-duude}(a), 
  \(\inf_{\Delta_0} \psi \geq e^{-R} \bar{\tau} \int_{\Delta_0} \psi \, dm_\Delta\).
  By our assumptions,
  \(m_\Delta(\{ x \in \Delta_0 : f^{I_k} x \in \Delta_0 \}) \geq \delta/\bar{\tau}\)
  for every \(I_k\). Hence
  \begin{align*}
    \int_{\Delta_0} L^{I_k} \psi \, dm_\Delta
    & = \int_{\Delta} \psi \,\, 1_{\Delta_0} \circ f^{I_k} \, dm_\Delta
    \geq \int_{\Delta_0} \psi \,\, 1_{\Delta_0} \circ f^{I_k} \, dm_\Delta
    \\ & \geq \inf_{\Delta_0} \psi \,\, m_\Delta(\{ x \in \Delta_0 : f^{I_k} x \in \Delta_0 \})
    \geq e^{-R} \delta \int_{\Delta_0} \psi \, dm_\Delta.
  \end{align*}
  By \cite{Selmer77},
  every \(n \geq N_1\) can be written as
  \(n = \sum_k n_k I_k\),
  where \(n_k\) are nonnegative integers.
  By Proposition~\ref{prop-duude}(b), it follows inductively that 
  \[
    \int_{\Delta_0} L^n \psi \, dm_\Delta
    \geq (e^{-R} \delta)^{\sum_k n_k} \int_{\Delta_0} \psi \, dm_\Delta
    \geq (e^{-R} \delta)^n \, \int_{\Delta_0} \psi \, dm_\Delta,
  \]
as required.
\end{proof}

\begin{lemma} \label{lem-recur}
  If \(\psi \in \cB\), then
  \(
    \int_{\Delta_0} \psi \, dm_\Delta 
    \geq \eps \int_\Delta \psi \, dm_\Delta,
  \)
where $\eps = {\SMALL\frac12} e^{-R} \bar{\tau}^{-1} (e^{-R} \delta)^N$.
\end{lemma}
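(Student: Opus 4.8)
The plan is to unwind the definition $\cB=L^N\cA$ and chain Propositions~\ref{prop-lllol} and~\ref{prop-kkkok}. Given $\psi\in\cB$, write $\psi=L^N\psi_0$ with $\psi_0\in\cA$. First I would apply Proposition~\ref{prop-lllol} to $\psi_0$ to obtain some index $j\in\{0,\dots,N_2\}$ with
\[
  \int_{\Delta_0} L^j\psi_0\,dm_\Delta \ge {\SMALL\frac12}e^{-R}\bar{\tau}^{-1}\int_\Delta\psi_0\,dm_\Delta.
\]

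Next, set $\chi=L^j\psi_0$. Since $\cA$ is closed under $L$ by Corollary~\ref{cor-A}(b), we have $\chi\in\cA$; in particular $|\chi|_{\eta,\ell}\le R$, so Proposition~\ref{prop-kkkok} is applicable to $\chi$. Writing $\psi=L^{N-j}\chi$ and noting that $N-j=N_1+(N_2-j)\ge N_1$ because $j\le N_2$, Proposition~\ref{prop-kkkok} with $n=N-j$ yields
\[
  \int_{\Delta_0}\psi\,dm_\Delta=\int_{\Delta_0}L^{N-j}\chi\,dm_\Delta\ge (e^{-R}\delta)^{N-j}\int_{\Delta_0}\chi\,dm_\Delta.
\]

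Finally I would combine the two displays, using the elementary bound $(e^{-R}\delta)^{N-j}\ge(e^{-R}\delta)^N$, which holds since $0<e^{-R}\delta<1$ (here $\delta\le 1$ as a lower bound for a probability and $e^{-R}<1$) and $0\le j\le N$, together with the fact that $L$ preserves integrals, so $\int_\Delta\psi\,dm_\Delta=\int_\Delta L^N\psi_0\,dm_\Delta=\int_\Delta\psi_0\,dm_\Delta$. This gives
\[
  \int_{\Delta_0}\psi\,dm_\Delta\ge (e^{-R}\delta)^N\cdot{\SMALL\frac12}e^{-R}\bar{\tau}^{-1}\int_\Delta\psi_0\,dm_\Delta=\eps\int_\Delta\psi\,dm_\Delta,
\]
as required. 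The argument is essentially bookkeeping; the only points requiring a moment of care are verifying that $\chi=L^j\psi_0$ still lies in $\cA$ so that the hypothesis $|\chi|_{\eta,\ell}\le R$ of Proposition~\ref{prop-kkkok} is met, and that $e^{-R}\delta\le 1$ so the exponent $N-j$ can be uniformly replaced by $N$ — both immediate from the definitions.
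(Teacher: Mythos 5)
Your proof is correct and follows essentially the same route as the paper's: unwind $\psi = L^N\psi_0$ with $\psi_0\in\cA$, apply Proposition~\ref{prop-lllol} to $\psi_0$, then apply Proposition~\ref{prop-kkkok} with $n = N-j$, and chain the estimates. The two points you flag — that $L^j\psi_0$ remains in $\cA$ so Proposition~\ref{prop-kkkok} applies, and that $e^{-R}\delta\le 1$ so the exponent $N-j$ can be relaxed to $N$ — are exactly the same (implicit) observations the paper relies on.
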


\begin{proof}
  By definition of \(\cB\), there exists \(\psi' \in \cA\) such that
  \(L^{N_1+N_2} \psi' = \psi\). By Proposition~\ref{prop-lllol}, there exists
  \(j \leq N_2\) such that 
  \(\int_{\Delta_0} L^j \psi' \, dm_\Delta 
  \geq \frac{1}{2} e^{-R} \bar{\tau}^{-1} \int_\Delta \psi' \, dm_\Delta\).
  By Proposition~\ref{prop-kkkok} (taking $n=N_1+N_2-j\ge N_1$),
  \begin{align*}
    \int_{\Delta_0} \psi \, dm_\Delta 
    & = \int_{\Delta_0}L^{N_1+N_2}\psi'\,dm_\Delta 
\geq (e^{-R} \delta)^{N_1+N_2-j} \int_{\Delta_0} L^j \psi' \, dm_\Delta
    \\ & \geq \frac{1}{2} e^{-R} \bar{\tau}^{-1} (e^{-R} \delta)^{N_1+N_2}
    \int_\Delta \psi' \, dm_\Delta = \eps
    \int_\Delta \psi \, dm_\Delta,
  \end{align*}
as required.
\end{proof}

\subsection{Decomposition in $\cB$}

Next, we introduce constants $p_n,\,t_n\in[0,1]$,
\begin{align*}
  & 
t_1 = 1-\eps, \quad 
   t_n = \min \{t_1,\, e^R\bar{\tau} m_\Delta(\cup_{\ell=n}^\infty \Delta_\ell)\}, \,\,n \geq 2, \\ &
     p_{-1}=\xi\eps, \quad p_0=(1-\xi)\eps, \quad p_n = t_n - t_{n+1}, \,\, n \geq 1.
\end{align*}
The monotonicity of the sequence $t_n$ ensures that $p_n\ge0$ for all $n$.
Note that \( \sum_{n=-1}^\infty p_n = 1\).

Let \(E_0 = \Delta_0\) and
\(E_k = \{ (y, \ell) \in \Delta : \ell=\tau(y)-k,\,\ell\ge1\}\)
for \(k \geq 1\). Then $\{E_0,E_1,\ldots\}$ defines a partition of $\Delta$ and 
\(m_\Delta (E_k) = m_\Delta(\Delta_k)\) for all \(k\).

\begin{prop}
  \label{prop-E}
  If \(\psi \in \cB\) with $\int_\Delta \psi \, dm_\Delta=1$, then
  \(
    \int_{\bigcup_{\ell=n}^\infty E_\ell} \psi \, dm_\Delta
    \leq t_n,
  \)
for $n\ge1$.
\end{prop}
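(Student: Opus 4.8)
The plan is to bound $\int_{\bigcup_{\ell\ge n}E_\ell}\psi\,dm_\Delta$ by two separate estimates and take the smaller one, matching the two terms in the definition $t_n=\min\{t_1,\,e^R\bar\tau\,m_\Delta(\cup_{\ell\ge n}\Delta_\ell)\}$. Before starting I would record the two structural facts already noted: since $\{E_0,E_1,\dots\}$ partitions $\Delta$ with $E_0=\Delta_0$, the tail set $\bigcup_{\ell\ge n}E_\ell$ is contained in $\Delta\setminus\Delta_0$ for every $n\ge1$; and $m_\Delta(E_\ell)=m_\Delta(\Delta_\ell)$, so by disjointness $m_\Delta\bigl(\bigcup_{\ell\ge n}E_\ell\bigr)=m_\Delta\bigl(\bigcup_{\ell\ge n}\Delta_\ell\bigr)$.

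The first estimate comes from recurrence to $\Delta_0$. Since $\psi\in\cB$ and $\int_\Delta\psi\,dm_\Delta=1$, Lemma~\ref{lem-recur} gives $\int_{\Delta_0}\psi\,dm_\Delta\ge\eps$, and as $\psi\ge0$,
\[
  \int_{\bigcup_{\ell\ge n}E_\ell}\psi\,dm_\Delta
  \le\int_{\Delta\setminus\Delta_0}\psi\,dm_\Delta
  =1-\int_{\Delta_0}\psi\,dm_\Delta\le 1-\eps=t_1 .
\]
The second estimate is the uniform $L^\infty$ bound built into the class $\cA$: by Corollary~\ref{cor-A}(b), $\cB\subset\cA$, so $|\psi|_\infty\le e^R\bar\tau\int_\Delta\psi\,dm_\Delta=e^R\bar\tau$, and therefore
\[
  \int_{\bigcup_{\ell\ge n}E_\ell}\psi\,dm_\Delta
  \le|\psi|_\infty\, m_\Delta\Bigl(\bigcup_{\ell\ge n}E_\ell\Bigr)
  \le e^R\bar\tau\, m_\Delta\Bigl(\bigcup_{\ell\ge n}\Delta_\ell\Bigr).
\]

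Combining the two displays yields $\int_{\bigcup_{\ell\ge n}E_\ell}\psi\,dm_\Delta\le\min\{t_1,\,e^R\bar\tau\,m_\Delta(\cup_{\ell\ge n}\Delta_\ell)\}$, which is exactly $t_n$ for $n\ge2$, while for $n=1$ the first estimate alone gives $t_1$. I do not expect a genuine obstacle: all the substance sits in Lemma~\ref{lem-recur} (which itself rests on Propositions~\ref{prop-lllol} and~\ref{prop-kkkok}) and in the definition of $\cA$, both already available. The only point needing a little care is the bookkeeping — keeping straight that $t_n$ for $n\ge2$ is the minimum of two quantities whereas $t_1$ is defined directly, and that the $m_\Delta$-measure of the tail of the $E_\ell$ equals that of the tail of the $\Delta_\ell$.
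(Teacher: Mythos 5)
Your proof is correct and follows the same two-pronged route as the paper's: the bound $1-\eps=t_1$ from Lemma~\ref{lem-recur}, and the $L^\infty$ bound $e^R\bar\tau\,m_\Delta(\cup_{\ell\ge n}\Delta_\ell)$ from $\cB\subset\cA$, combined via the definition of $t_n$ as a minimum. The bookkeeping (including $m_\Delta(E_\ell)=m_\Delta(\Delta_\ell)$) is handled exactly as in the paper.
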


\begin{proof}
  By Lemma~\ref{lem-recur}, 
    $\int_{\bigcup_{\ell=n}^\infty E_\ell} \psi \, dm_\Delta
    \le \int_{\bigcup_{\ell=1}^\infty E_\ell} \psi \, dm_\Delta
    \leq 1-\eps=t_1$ for all $n\ge1$.

By definition of $\cB$,
for \(n \geq 2\) we have in addition that
  \(
    \int_{\bigcup_{\ell=n}^\infty E_\ell} \psi \, dm_\Delta
    \leq m_\Delta(\cup_{\ell=n}^\infty \Delta_\ell) |\psi|_\infty
    \leq e^R\bar\tau m_\Delta(\cup_{\ell=n}^\infty \Delta_\ell).
  \)
The result follows by definition of~$t_n$.~
\end{proof}

\begin{prop}  \label{prop-s}
Let $p_j$, $q_j\in[0,\infty)$ be sequences such that
\(\sum_{j=0}^{\infty} p_j = \sum_{j=0}^{\infty} q_j < \infty\)
and $\sum_{j=0}^k q_j\ge \sum_{j=0}^k p_j$ for all $k\ge0$.
Then there exist $s_{k,j}\in[0,1]$, $0\le j\le k$, such that
$\sum_{j=0}^k s_{k,j}q_j=p_k$ for all $k\ge0$ and
$\sum_{k=j}^\infty s_{k,j}=1$ for all $j\ge0$.
\end{prop}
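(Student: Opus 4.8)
The plan is to read the statement as a mass-transport (coupling) problem and to realize the coupling geometrically on an interval. Put $S=\sum_{j\ge0}p_j=\sum_{j\ge0}q_j<\infty$ and, with the conventions $P_{-1}=Q_{-1}=0$, set $P_k=\sum_{j=0}^k p_j$ and $Q_k=\sum_{j=0}^k q_j$; the hypothesis then says precisely that $P_k\le Q_k$ for all $k\ge0$, while $P_k,Q_k\uparrow S$. The half-open intervals $J_j=[Q_{j-1},Q_j)$ form a partition of $[0,S)$ with $\operatorname{Leb}(J_j)=q_j$, and likewise the $I_k=[P_{k-1},P_k)$ partition $[0,S)$ with $\operatorname{Leb}(I_k)=p_k$, where $\operatorname{Leb}$ denotes Lebesgue measure. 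I would then define, for those $j$ with $q_j>0$,
\[
s_{k,j}=\frac{\operatorname{Leb}(J_j\cap I_k)}{q_j}\qquad(k\ge0),
\]
and, for those $j$ with $q_j=0$, set $s_{j,j}=1$ and $s_{k,j}=0$ for $k\neq j$.

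The one genuine point is to check that $s_{k,j}=0$ whenever $j>k$, so that the quantities are indexed exactly as in the statement; this is where the hypothesis $P_k\le Q_k$ enters. If $x\in J_j\cap I_k$ then $x\ge Q_{j-1}\ge P_{j-1}$ and $x<P_k$, so $P_{j-1}<P_k$, i.e.\ $\sum_{i=j}^k p_i>0$, which forces $k\ge j$. Hence $J_j\cap I_k=\emptyset$ for $j>k$, and the same holds trivially in the degenerate case $q_j=0$. I expect this to be the main obstacle, though it is a mild one; everything else is bookkeeping.

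It then remains to verify the three requirements. Clearly $s_{k,j}\in[0,1]$ since $\operatorname{Leb}(J_j\cap I_k)\le\operatorname{Leb}(J_j)=q_j$. For the demand identity, since the $J_j$ partition $[0,S)\supset I_k$ and $J_j\cap I_k=\emptyset$ for $j>k$, countable additivity gives $\sum_{j=0}^k s_{k,j}q_j=\sum_{j\ge0}\operatorname{Leb}(J_j\cap I_k)=\operatorname{Leb}(I_k)=p_k$ (terms with $q_j=0$ vanish on both sides). For the supply identity, similarly $\sum_{k\ge j}s_{k,j}=q_j^{-1}\sum_{k\ge0}\operatorname{Leb}(J_j\cap I_k)=q_j^{-1}\operatorname{Leb}(J_j)=1$ when $q_j>0$, and $\sum_{k\ge j}s_{k,j}=s_{j,j}=1$ when $q_j=0$. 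This completes the argument. Equivalently, one can phrase the construction recursively: allocate the supplies $q_0,q_1,\dots$ to the demands $p_0,p_1,\dots$ in increasing order of index, always drawing on the oldest unused supply first; the partial-sum hypothesis is exactly what guarantees that no demand is ever short of supply.
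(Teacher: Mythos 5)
Your proof is correct. You realize the coupling as a monotone transport plan on the interval $[0,S)$: stack the supplies $q_j$ end to end to get the partition $\{J_j\}$, stack the demands $p_k$ to get $\{I_k\}$, and set $s_{k,j}=\operatorname{Leb}(J_j\cap I_k)/q_j$. The hypothesis $P_k\le Q_k$ is precisely what guarantees $J_j\cap I_k=\emptyset$ for $j>k$, and the two required identities are just the statements that $\{J_j\}$ and $\{I_k\}$ are partitions of $[0,S)$. All the verifications you carry out are valid, including the careful treatment of the degenerate case $q_j=0$.

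This is the same underlying coupling as in the paper (the ``oldest supply first'' greedy allocation, which you note yourself at the end), but the route to proving it works is genuinely different. The paper defines the $s_{k,j}$ by an explicit recursion, taking the minimum of the remaining supply fraction and what the current demand still needs, and then argues by contradiction -- using the partial-sum hypothesis -- that the recursion never fails to meet the demand, after which the supply identity follows from a summability argument. Your approach replaces the recursion and the contradiction argument with a closed-form description of $s_{k,j}$, where all three required properties drop out of elementary facts about intervals. The trade-off is that the paper's recursive formulation keeps the ``coupling at time $k$'' structure completely explicit (which matches the way the decomposition is consumed in Lemma 4.8), whereas your formulation is shorter to verify and makes the triangular support $j\le k$ and the equality of total masses transparent in one stroke. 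Both are complete; yours is arguably cleaner as a standalone proof of the proposition.
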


\begin{proof}
  We assume that \(q_j>0\) for all $j$;
  otherwise set \(s_{k,j} = \delta_{k,j}\) for $k\le j$ whenever
  \(q_j = 0\).

  For \(k=0\), choose \(s_{0,0} = p_0 / q_0\).
  Next let \(k\ge1\), and suppose inductively
  that \(s_{k',j}\) have been constructed for \(0\le j\le k' \le  k-1\),
  such that $\sum_{j=0}^{k'} s_{k',j}q_j=p_{k'}$ for $k'\le k-1$ 
  and $\sum_{k'=j}^{k-1} s_{k',j}\le1$ for $j\le k-1$.
  
  Define $s_{k,0}, s_{k,1}, \ldots, s_{k,k}\in[0,1]$ (in this order) by
  \[
    s_{k,j} = \min\Bigl\{ 1-\sum_{k' = j}^{k-1} s_{k',j}, \,\,
    \frac{p_k - \sum_{j'=0}^{j-1} s_{k,j'} q_{j'}}{q_j} \Bigr\}, \quad
  j = 0, 1, \ldots, k.
  \]
  By construction,
  \(\sum_{j=0}^{k} s_{k,j} q_j \leq p_k\).
  If \(\sum_{j=0}^{k} s_{k,j} q_j < p_k\), then necessarily
  \mbox{\(\sum_{k'=j}^{k} s_{k',j} = 1\)} for all
  \(j\le k\), and so
  \[
    \sum_{k'=0}^{k} q_{k'}
    = \sum_{k'=0}^{k} \sum_{j=0}^{k'} s_{k',j} q_j
    = \sum_{j=0}^{k} s_{k,j} q_j + \sum_{k'=0}^{k-1} p_{k'}
    < \sum_{k'=0}^{k} p_{k'},
  \]
  which is a contradiction. Hence \(\sum_{j=0}^{k} s_{k,j} q_j = p_k\).
  
  By the above construction, 
  \(\sum_{j=0}^{k} s_{k,j} q_j = p_k\) for $k\ge0$ and
  \(\sum_{k=j}^{\infty} s_{k,j} \leq 1\) for $j\ge0$.
  Since also
  \(\sum_{j=0}^{\infty} p_j = \sum_{j=0}^{\infty} q_j < \infty\),
  we conclude that
  \(\sum_{k=j}^{\infty} s_{k,j} = 1\).
\end{proof}

\begin{lemma}
  \label{lem-W}
  Let \(\psi : \Delta \to [0, \infty)\) be such that
  \(L^n \psi \in \cB\) for some \(n \geq 0\).
  Then 
    $\psi =  \sum_{k=-1}^{\infty} \psi_k$,
  where \(\psi_k : \Delta \to [0,\infty)\) are such that
  \begin{itemize}
    \item[(i)] \(L^n \psi_{-1} = p_{-1} \bar{\tau} \int_\Delta \psi \, dm_\Delta 1_{\Delta_0}\), \quad (ii)
    \(L^{k+n} \psi_k \in \cA\) for all $k\ge0$,
    \item[(iii)] \(\int_\Delta \psi_k \, dm_\Delta = p_k \int_\Delta \psi \, dm_\Delta\)
      for all \(k \geq -1\).
  \end{itemize}
\end{lemma}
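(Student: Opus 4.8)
The plan is to reduce the statement to a decomposition of $\chi:=L^n\psi$ inside $\cB$ and then to transport it back through $L^n$. By homogeneity I may assume $\int_\Delta\psi\,dm_\Delta=1$, so $\int_\Delta\chi\,dm_\Delta=1$ (the case $\psi\equiv 0$ being vacuous). I would construct $\chi=\sum_{k=-1}^\infty\chi_k$ with $\chi_k\ge 0$, $\chi_{-1}=p_{-1}\bar\tau\,1_{\Delta_0}$, $\int_\Delta\chi_k\,dm_\Delta=p_k$, $\chi_0\in\cA$ and $L^k\chi_k\in\cA$ for $k\ge1$, and then put $\psi_k=\psi\,(\chi_k\circ f^n)/(\chi\circ f^n)$, read as $0$ wherever $\chi\circ f^n=0$ (which forces $\psi=0$ there since $\psi\ge0$ and $L^n\psi=\chi$). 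Because $\sum_k\chi_k=\chi$ this gives $\sum_k\psi_k=\psi$ with each $\psi_k\ge0$, while a one-line transfer-operator computation gives $L^n\psi_k=\chi_k$; then (i) is exactly $\chi_{-1}=p_{-1}\bar\tau1_{\Delta_0}$, (ii) is $L^{k+n}\psi_k=L^k\chi_k\in\cA$, and (iii) holds since $L$ preserves integrals. So everything reduces to decomposing $\chi$.

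For that, recall $E_0=\Delta_0$, that $\{E_k\}_{k\ge0}$ partitions $\Delta$, and that $f(E_k)\subset E_{k-1}$ for $k\ge1$, so $f^k(E_k)\subset\Delta_0$. First peel off $\chi_{-1}:=p_{-1}\bar\tau1_{\Delta_0}$. By Lemma~\ref{lem-recur}, $c_0:=\int_{\Delta_0}\chi\,dm_\Delta\ge\eps$, and since $\chi\in\cB=L^N\cA$ with $N\ge1$ I may write $\chi=L\chi_1$ with $\chi_1\in\cA$ (Corollary~\ref{cor-A}(b)); applying Proposition~\ref{prop-duude}(c) to $\chi_1$ with $t=p_{-1}/c_0\in[0,\xi]$ shows that $r_0:=(\chi(\cdot,0)-p_{-1}\bar\tau)1_{\Delta_0}$ is nonnegative with $|r_0|_{\eta,\ell}\le R$, hence $r_0\in\cA$ by Corollary~\ref{cor-A}(a); in particular $\chi_{-1}\le\chi1_{\Delta_0}$. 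Set $q_0=\int_\Delta r_0\,dm_\Delta=c_0-p_{-1}$ and $q_j=\int_{E_j}\chi\,dm_\Delta$ for $j\ge1$. From the definitions of $t_n,p_n$ one has $\sum_{j>k}p_j=t_{k+1}$, hence $\sum_{j=0}^k p_j=1-p_{-1}-t_{k+1}$, while Proposition~\ref{prop-E} gives $\sum_{j=0}^k q_j=1-p_{-1}-\int_{\bigcup_{\ell>k}E_\ell}\chi\,dm_\Delta\ge 1-p_{-1}-t_{k+1}$; also $\sum_{j\ge0}q_j=1-p_{-1}=\sum_{j\ge0}p_j<\infty$. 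So Proposition~\ref{prop-s} supplies $s_{k,j}\in[0,1]$ with $\sum_{j=0}^k s_{k,j}q_j=p_k$ and $\sum_{k\ge j}s_{k,j}=1$, and I would set $\chi_k=s_{k,0}r_0+\sum_{j=1}^k s_{k,j}\,\chi1_{E_j}$ for $k\ge0$; then $\int_\Delta\chi_k\,dm_\Delta=p_k$ and $\chi_{-1}+\sum_{k\ge0}\chi_k=\chi_{-1}+r_0+\sum_{j\ge1}\chi1_{E_j}=\chi$.

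It remains to check the regularity of the $\chi_k$. Since $r_0\in\cA$, Corollary~\ref{cor-A}(b) gives $s_{k,0}L^k r_0\in\cA$ for every $k\ge0$. The point for the remaining terms is that, although $\chi1_{E_j}$ may have infinite $|\cdot|_{\eta,\ell}$ (restricting to $E_j$ within a level can place a positive value next to a zero), its image is tame: $\chi1_{E_j}$ is supported on $E_j$, so $L^j(\chi1_{E_j})$ is supported on $\Delta_0$, where the explicit form of $L$ gives $\bigl(L^j(\chi1_{E_j})\bigr)(z,0)=\sum_a\zeta(y_a)\,\chi(y_a,\tau(a)-j)$ (sum over $a\in\alpha$ with $\tau(a)\ge j+1$, $y_a$ the $F$-preimage of $z$ in $a$); exactly as in Proposition~\ref{prop-dist} each summand has log-H\"older seminorm at most $K+\lambda^{-\eta}R$, so by~\eqref{eq-sum} $|L^j(\chi1_{E_j})|_{\eta,\ell}\le K+\lambda^{-\eta}R\le R$, and Corollary~\ref{cor-A}(a) gives $L^j(\chi1_{E_j})\in\cA$; applying $L^{k-j}$ (Corollary~\ref{cor-A}(b)) gives $L^k(\chi1_{E_j})\in\cA$ for $k\ge j$. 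Summing over $j$, $\chi_0=s_{0,0}r_0\in\cA$ and $L^k\chi_k\in\cA$ for $k\ge1$, which completes the proof.

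I expect the main obstacle to be precisely the two regularity checks above: subtracting the constant $p_{-1}\bar\tau$ on $\Delta_0$ and restricting $\chi$ to $E_j$ both naively destroy the bound $|\cdot|_{\eta,\ell}\le R$, and the two saving features — that $\chi$ already lies in the range of $L$, so Propositions~\ref{prop-diet} and~\ref{prop-duude}(c) absorb the subtracted constant, and that the full-branch structure lets $L^j$ ``heal'' the discontinuities of $\chi1_{E_j}$ — are the real content of the lemma.
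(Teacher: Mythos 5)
Your proof is correct and takes essentially the same route as the paper's: reduce to $n=0$, peel off $\psi_{-1}$ using Lemma~\ref{lem-recur} and Proposition~\ref{prop-duude}(c), decompose the remainder along the partition $\{E_k\}$, redistribute masses via Proposition~\ref{prop-s}, and transport back through $L^n$ via $\psi_k = \psi\,(\chi_k\circ f^n)/(\chi\circ f^n)$. The only difference is that you spell out the regularity check $|L^j(\chi 1_{E_j})|_{\eta,\ell}\le R$ (which the paper asserts in one line), and you do the $n\ge1$ reduction up front rather than at the end — both cosmetic.
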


\begin{proof}
  First we consider the case \(n = 0\). 
Suppose without loss that 
  \(\int_\Delta \psi \, dm_\Delta = 1\).
Define $\psi_{-1}=p_{-1}\bar\tau 1_{\Delta_0}$ in accordance with properties~(i) and~(iii).
    
By Lemma~\ref{lem-recur},
$\int_{\Delta_0}\psi\,dm_\Delta\ge\eps$.
Hence $t=p_{-1}/\int_{\Delta_0}\psi\,dm_\Delta=\xi\eps/\int_{\Delta_0}\psi\,dm_\Delta\in[0,\xi]$.
Since $\psi\in \cB\subset L\cA$, it follows from
Proposition~\ref{prop-duude}(c) that
\[
\SMALL \psi'=\psi-t\bar\tau\int_{\Delta_0}\psi\,dm_\Delta\,1_{\Delta_0}
=\psi-p_{-1}\bar\tau 1_{\Delta_0}=\psi-\psi_{-1}
\]
is nonnegative and $|\psi'|_{\eta,\ell}\le R$.
Setting $g_0=\psi'1_{\Delta_0}$, we obtain that
$\psi 1_{\Delta_0}=\psi_{-1}+g_0$ where $g_0$ is nonnegative and 
$|g_0|_{\eta,\ell}\le R$.
By Corollary~\ref{cor-A}(a),
$g_0\in\cA$.

Define \(g_k = \psi 1_{E_k}\) for $k\ge1$. 
Note that \(L^k g_k\) is supported on \(\Delta_0\) and \(|L^k g_k|_{\eta, \ell} \leq R\).  By Corollary~\ref{cor-A}(a), \(L^k g_k \in \cA\). 
  
Now \(\psi = \psi1_{\Delta_0}+\sum_{k=1}^\infty g_k 
=\psi_{-1}+\sum_{k=0}^\infty g_k\). 
By Proposition~\ref{prop-E}, 
  \begin{align*}
    p_{-1}+\sum_{j=0}^k \int_\Delta g_j \, dm_\Delta
    = 1 - \sum_{j=k+1}^\infty \int_\Delta g_j \, dm_\Delta 
    \geq 1-t_{k+1} 
    = \sum_{j={-1}}^k p_j.
  \end{align*}
Setting $q_k=\int_\Delta g_k\,dm_\Delta$, we have that
    $\sum_{j=0}^k q_j\ge \sum_{j=0}^k p_j$ for all $k\ge0$.
Choose  \(s_{k,j} \in [0, 1]\) as in
  Proposition~\ref{prop-s}, and 
  define \(\psi_k : \Delta \to [0,\infty)\), $k\ge0$, by
  \begin{align*}
\SMALL \psi_k=\sum_{j=0}^k s_{k,j}g_j.
  \end{align*}
By construction, condition~(iii) holds for all $k$.
Condition~(ii) is satisfied by Corollary~\ref{cor-A}(b).
Finally, by Proposition~\ref{prop-s},
\begin{align*}
\sum_{k=0}^\infty\psi_k=\sum_{k=0}^\infty\sum_{j=0}^k s_{k,j}g_j
=\sum_{j=0}^\infty\sum_{k=j}^\infty s_{k,j}g_j
=\sum_{j=0}^\infty g_j =\psi-\psi_{-1},
\end{align*}
completing the proof for $n=0$.

Now suppose $L^n\psi\in\cB$ for some $n\ge1$.
Setting $\psi'=L^n\psi$ and applying the result for $n=0$, we can write
$\psi'=\sum_{k=-1}^\infty\psi'_k$ where $\psi'_k$ satisfy properties (i)--(iii).
Define
   \(\psi_k = \bigl( \frac{\psi'_k}{\psi'} \circ f^n \bigr) \, \psi\)
  with the convention that \(0/0 = 0\).
  Then $L^n\psi_k=\frac{\psi'_k}{\psi'}L^n\psi=\psi'_k$, so
properties (i)--(iii) are passed down from  $\psi'_k$ to $\psi_k$.
Also $\sum_{k=-1}^\infty \psi_k=\big(\frac{\psi'}{\psi'}\circ f^n\big)\psi=\psi$.~
\end{proof}

Let \(W\) be the countable set of all finite words in the alphabet 
$\N=\{0,1,2,\ldots\}$
including the zero length word, 
and let \(W_k\) be the subset consisting of words of length \(k\). 
Let \(\bP\) be the  probability measure
on \(W\) given for \(w = w_1 \cdots w_k \in W\) by
\(\bP(w) = p_{-1} p_{w_1} \cdots p_{w_k}\).
Define $h:W\to \N$ by
$h(w)=\Sigma w+N|w|$, where
$\Sigma w=w_1+\dots+ w_k$ and $|w|=k$ for $w=w_1\cdots w_k$.

\begin{prop}
  \label{prop-W}
  Let \(\psi \in \cB\) with \(\int_\Delta \psi \, dm_\Delta = 1\).
  Then
    $\psi = \sum_{w \in W} \psi_w$,
  where \(\psi_w : \Delta \to [0, \infty)\) are
  such that \(\int_\Delta \psi_w \, dm_\Delta = \bP(w)\)
  and \(L^{h(w)} \psi_w = \bP(w) \bar{\tau} 1_{\Delta_0}\).
\end{prop}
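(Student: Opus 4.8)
The plan is to build the family $\{\psi_w\}$ by iterating Lemma~\ref{lem-W} along the tree of finite words, using the branch index $-1$ as a ``stop'' instruction and the branch indices $j\ge0$ as ``continue with symbol~$j$'' instructions. Since $\psi\in\cB$ we have $L^0\psi\in\cB$, so we may start at the empty word $\emptyset$ by setting $\phi_\emptyset=\psi$ and $m_\emptyset=0$. The inductive invariant attached to a word $u=u_1\cdots u_k$ is: a function $\phi_u:\Delta\to[0,\infty)$ with $\int_\Delta\phi_u\,dm_\Delta=p_{u_1}\cdots p_{u_k}$ (empty product $=1$, consistent with $\int_\Delta\psi\,dm_\Delta=1$) and $L^{m_u}\phi_u\in\cB$, where $m_u=h(u)=\Sigma u+N|u|$. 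Given such a $u$, apply Lemma~\ref{lem-W} to $\phi_u$ with $n=m_u$; this writes $\phi_u=\psi_u+\sum_{j\ge0}\phi_{uj}$, where $\psi_u$ is the ``$k=-1$'' term and $\phi_{uj}$ is the ``$k=j$'' term. Lemma~\ref{lem-W}(i),(iii) give $L^{m_u}\psi_u=p_{-1}\bar\tau\int_\Delta\phi_u\,dm_\Delta\,1_{\Delta_0}$ and $\int_\Delta\psi_u\,dm_\Delta=p_{-1}p_{u_1}\cdots p_{u_k}=\bP(u)$; since $m_u=h(u)$ and $p_{-1}\int_\Delta\phi_u\,dm_\Delta=\bP(u)$, this is exactly $L^{h(u)}\psi_u=\bP(u)\bar\tau 1_{\Delta_0}$. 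Lemma~\ref{lem-W}(ii),(iii) give $L^{j+m_u}\phi_{uj}\in\cA$ and $\int_\Delta\phi_{uj}\,dm_\Delta=p_jp_{u_1}\cdots p_{u_k}$; since $\cB=L^N\cA$ we get $L^{m_u+j+N}\phi_{uj}\in\cB$, and $m_u+j+N=h(uj)$, so the invariant is restored for the word $uj$. Running this for every word defines $\psi_w$ for all $w\in W$, with the two desired identities.

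What remains is to verify $\psi=\sum_{w\in W}\psi_w$. Unfolding the relation $\phi_u=\psi_u+\sum_{j\ge0}\phi_{uj}$ from the root down to depth $k$ gives $\psi=\sum_{|w|<k}\psi_w+\sum_{|w|=k}\phi_w$ with all summands nonnegative, so the partial sums $S_k=\sum_{|w|<k}\psi_w$ increase to some limit $S_\infty\le\psi$ pointwise. Moreover $\int_\Delta(\psi-S_k)\,dm_\Delta=\sum_{|w|=k}\int_\Delta\phi_w\,dm_\Delta=\bigl(\sum_{j\ge0}p_j\bigr)^k=(1-p_{-1})^k=(1-\xi\eps)^k\to0$, and monotone convergence then forces $S_\infty=\psi$ $m_\Delta$-a.e. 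This yields the series, and $\sum_{w\in W}\bP(w)=\sum_{k\ge0}p_{-1}(1-p_{-1})^k=1$ is consistent with $\int_\Delta\psi_w\,dm_\Delta=\bP(w)$ and $\int_\Delta\psi\,dm_\Delta=1$.

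There is no genuinely hard step here: the proposition is a disciplined iteration of Lemma~\ref{lem-W}. The only point requiring care is the exponent bookkeeping. Each descent by one symbol $j$ costs $j$ iterates of $L$ to bring the corresponding mass back to $\Delta_0$ in the sense of $\cA$, plus a further $N$ iterates to re-enter $\cB$ (this is where the identity $\cB=L^N\cA$ is used), which is precisely why $h(w)=\Sigma w+N|w|$ is the correct stopping exponent. The other point to nail down is the convergence of the resulting series, which reduces to the single inequality $\sum_{j\ge0}p_j=1-p_{-1}<1$, i.e.\ to $p_{-1}=\xi\eps>0$.
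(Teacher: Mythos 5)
Your proof is correct and follows essentially the same route as the paper: iterate Lemma~\ref{lem-W} along the tree of finite words, with the $k=-1$ branch producing the terms $\psi_w$ (what the paper labels $\psi_{-1w}$) and the $k=j$ branches producing the remainders $\phi_{uj}$ to recurse on, with the factor of $N$ in $h(w)=\Sigma w+N|w|$ accounting precisely for the $\cB=L^N\cA$ re-entry at each level. Your only addition relative to the paper's write-up is an explicit convergence check (monotonicity of partial sums plus $\int_\Delta(\psi-S_k)\,dm_\Delta=(1-p_{-1})^k\to0$), which the paper leaves implicit in its ``in this way we obtain'' step; this is a small but welcome tightening, not a different argument.
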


\begin{proof}
  Write \(\psi = \sum_{k=-1}^\infty \psi_k\) as in Lemma~\ref{lem-W}
(with $n=0$).
By properties~(iii) and~(i),
  \(\int_\Delta \psi_k \, dm_\Delta = p_k\) for all $k\ge-1$,
and $\psi_{-1}=p_{-1}\bar\tau 1_{\Delta_0}$.

  Also \(L^{k+N} \psi_k \in \cB\) by property~(ii), allowing us to
  apply Lemma~\ref{lem-W} to each $\psi_k$ (with $n=k+N$), yielding
\[
\psi=\psi_{-1}+\sum_{k=0}^\infty\psi_k
=\psi_{-1}+\sum_{k=0}^\infty\Big(\psi_{-1,k}+\sum_{j=0}^\infty\psi_{j,k}\Big),
\]
where 
\begin{align*}
\int_\Delta\psi_{j,k}\,dm_\Delta & =
p_j \int_\Delta \psi_k\,dm_\Delta =p_jp_k, \\
L^{k+N}\psi_{-1,k} & =p_{-1}\bar\tau\int_\Delta \psi_k\,dm_\Delta\,1_{\Delta_0}=
p_{-1}p_k\bar\tau 1_{\Delta_0}.
\end{align*}

At the next step,
\begin{align*}
\psi
& =\psi_{-1}+\sum_{k=0}^\infty\psi_{-1,k}+\sum_{j,k=0}^\infty\Big(\psi_{-1,j,k}+
\sum_{i=0}^\infty\psi_{i,j,k}\Big),
\\ & =\psi_{-1}+\sum_{w\in W_1}\psi_{-1,w}+
\sum_{w\in W_2}\psi_{-1,w}+\sum_{i,j,k=0}^\infty \psi_{i,j,k},
\end{align*}
where 
\[
\int_\Delta \psi_{i,j,k}=p_ip_jp_k,  \quad
L^{j+k+2N}\psi_{-1,j,k} = p_{-1}p_jp_k\bar\tau 1_{\Delta_0}.
\]
In particular, for the terms $\psi_{-1,w}$ with $w\in W_0\cup W_1\cup W_2$,
we have the required properties
$\int_{\Delta}\psi_{-1w}\,dm_\Delta=\bP(w)$
and $L^{h(w)}\psi_{-1,w}=\bP(w)\bar\tau 1_{\Delta_0}$.

In this way we obtain $\psi=\sum_{w\in W}\psi_{-1w}$ where
$\int_{\Delta}\psi_{-1w}\,dm_\Delta=\bP(w)$
and $L^{h(w)}\psi_{-1,w}=\bP(w)\bar\tau 1_{\Delta_0}$.
\end{proof}

\subsection{Proof of Theorems~\ref{thm-NUE} and~\ref{thm-NUE2}}

Let \(\phi : \Delta \to \R\) be an observable as in Theorem~\ref{thm-NUE},
i.e.\ \(\|\phi\|_{\eta} < \infty \) and \(\int_\Delta \phi \, dm_\Delta = 0\).
Define \(\tilde\psi, \tilde\psi' : \Delta \to [0,\infty)\) by
\[
  \tilde\psi = 1 + \frac{\phi}{\|\phi\|_\eta (1+R^{-1})}
  , \qquad
  \tilde\psi' \equiv 1.
\]
Then \(L^n\phi = \|\phi\|_\eta (1+R^{-1}) (\psi - \psi')\),
where $\psi=L^N\tilde\psi$,
$\psi'=L^N\tilde\psi'$.

Now $\int_\Delta\psi\,dm_\Delta=\int_\Delta\psi'\,dm_\Delta=1$.
  Next, 
  \[
    |\tilde\psi|_\infty \leq 1+ \frac{1}{1+R^{-1}}
    \leq 1+R
    \leq e^R \le \bar\tau e^R.
  \]
Also, $|\tilde\psi|\ge 1-(1+R^{-1})^{-1}=(1+R)^{-1}$ and
$|\tilde\psi|_\eta\le(1+R^{-1})^{-1}$, so
  for \(x,y \in \Delta\),
  \[
    |\log \tilde\psi (x) - \log \psi (y)|
    \leq |\tilde\psi^{-1}|_\infty \, |\tilde\psi(x) - \tilde\psi(y)|
    \leq \frac{R+1}{1+R^{-1}}d_\Delta(x,y)
    = Rd_\Delta(x,y).
  \]
  Thus $|\tilde\psi|_{\eta, \ell}\leq R$.
We have shown that $\tilde\psi\in\cA$, and hence $\psi\in\cB$.
Clearly, $\psi'\in\cB$.

We have shown that $\psi$ and $\psi'$ satisfy the hypotheses of
Proposition~\ref{prop-W} and hence admit the decompositions 
given in the conclusion of Proposition~\ref{prop-W}.
We are therefore in the situation described in Subsection~\ref{sec-outline} 
(with $C=\|\phi\|_\eta(1+R^{-1})$, and the argument there shows that
  \[
    \int_\Delta |L^{N+n} \phi| \, dm_\Delta
    \leq 2 \|\phi\|_\eta (1+R^{-1}) \bP(h>n).
  \]
  To prove Theorem~\ref{thm-NUE}, it remains to
  estimate the decay of \(\bP(h>n)\).

Recall that \(W_k\) is the subset of \(W\) consisting of words of length \(k\). Then
\(\bP(W_k) = (1-p_{-1})^k p_{-1}\).
Elements of \(W_k\) have the form \(w_1 \cdots w_k\)
where \(w_1, \ldots, w_k\) can be regarded as independent identically
distributed random variables, drawn from $\N$ with distribution
\(\bP(w_1 = n) = p_n/(1-p_{-1})\).
Also, $\bP(|w|\ge n)=(1-p_{-1})^n$.

\subsubsection*{Polynomial tails}

\begin{prop}
  \label{prop-poly}
Suppose that there exists $C_\tau>0$ and $\beta>1$ such that
\(m(\tau \geq n) \leq C_\tau n^{-\beta}\) for \(n \geq 1\).  
  
Then \( \bP(h \geq n) \leq C n^{-(\beta-1)} \) for \(n \geq 1\),
  where \(C\) depends continuously
  on \(C_\tau\), \(\beta\), $R$, \(N\) and \(p_{-1}\).
\end{prop}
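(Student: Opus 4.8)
The plan is to bound $\bP(h \geq n)$ by separately controlling the two contributions to $h(w) = \Sigma w + N|w|$: the length $|w|$ of the word, which has a geometric tail $\bP(|w| \geq k) = (1-p_{-1})^k$, and the letter-sum $\Sigma w$, whose tail is governed by that of a single letter $w_1$. First I would translate the tail hypothesis on $\tau$ into a tail bound on $p_n$. Recall $p_n = t_n - t_{n+1} \leq t_n \leq e^R\bar\tau\, m_\Delta(\cup_{\ell\geq n}\Delta_\ell)$ for $n\geq 2$, and $m_\Delta(\cup_{\ell\geq n}\Delta_\ell) = \bar\tau^{-1}\sum_{\ell\geq n} m(\tau\geq \ell+1) \leq \bar\tau^{-1}\sum_{\ell \geq n} C_\tau(\ell+1)^{-\beta} \leq \const C_\tau \bar\tau^{-1} n^{-(\beta-1)}$ since $\beta>1$. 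Hence $p_n \leq \const e^R C_\tau\, n^{-(\beta-1)}$, i.e. a single letter $w_1$ (conditioned to be $\geq 0$, so with distribution $p_n/(1-p_{-1})$) has a polynomial tail of exponent $\beta-1 > 0$.

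Next I would decompose the event $\{h \geq n\}$. Fix a threshold, say split on whether $|w| \geq \lfloor c n\rfloor$ or not, for a small constant $c$ to be chosen depending on $N$ and $p_{-1}$. On the first event, $\bP(|w|\geq cn) = (1-p_{-1})^{cn}$ decays exponentially in $n$, which is certainly $O(n^{-(\beta-1)})$ with a constant depending continuously on $p_{-1}$, $\beta$, $c$. On the complementary event $|w| = k < cn$, for $h(w) = \Sigma w + Nk \geq n$ we need $\Sigma w \geq n - Nk \geq n(1 - Nc)$; choosing $c < 1/N$ makes this at least $\frac12 n$ say. So it remains to bound, summing over $k$, the probability that a sum of $k$ i.i.d.\ copies of a random variable with polynomial tail of exponent $\beta-1$ exceeds $\frac12 n$. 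By a union bound / ``one big jump'' estimate (for heavy-tailed summands the dominant contribution is a single large term), $\bP(w_1 + \cdots + w_k \geq \frac12 n) \leq \const\, k\, (n/(2k))^{-(\beta-1)}$ when, say, $n/(2k)\ge$ some absolute constant, which holds because $k<cn$; actually more cleanly, $\bP(\Sigma w \geq \frac12 n) \leq k\,\bP(w_1 \geq \frac{n}{2k}) \leq \const\, k^{\beta}\, C_\tau\, n^{-(\beta-1)}$. Multiplying by $\bP(|w|=k)\leq (1-p_{-1})^k$ and summing the convergent series $\sum_k k^\beta (1-p_{-1})^k$ (finite, depending continuously on $p_{-1}$ and $\beta$) gives the desired bound.

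Combining the two contributions yields $\bP(h\geq n) \leq C n^{-(\beta-1)}$ with $C$ a finite expression built from $C_\tau$, $\beta$, $R$, $N$, $p_{-1}$ (through the geometric series, the constant $e^R$ from the tail translation, the choice $c = \frac{1}{2N}$, and $\bar\tau$ which is itself controlled since $\bar\tau = \sum_n m(\tau\geq n) \leq 1 + C_\tau\sum_{n\geq 1}n^{-\beta}$), and each ingredient depends continuously on these parameters. The main obstacle I anticipate is getting the heavy-tailed sum estimate $\bP(w_1+\cdots+w_k \geq \frac12 n)$ with the right power of $n$ and only polynomial-in-$k$ loss: the naive union bound $k\,\bP(w_1 \geq n/(2k))$ already suffices here because $k$ is restricted to $k < cn$ and then gets damped by the geometric factor $(1-p_{-1})^k$, so no sharp large-deviation input is actually needed — the care is just in tracking that the restriction $k<cn$ with $c<1/N$ is exactly what lets $n-Nk$ stay a fixed fraction of $n$, and in keeping all constants explicit and continuous.
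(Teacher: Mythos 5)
Your overall strategy matches the paper's: bound the tail of a single letter, use a union bound over letters to control $\Sigma w$ given $|w|=k$, damp by the geometric factor $\bP(W_k)$, and split $\{h\ge n\}$ into contributions from $\Sigma w$ and $|w|$.

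There is one genuine gap in the first step. You bound $p_n\le t_n\le e^R\bar\tau\, m_\Delta(\cup_{\ell\ge n}\Delta_\ell)\lesssim C_\tau n^{-(\beta-1)}$ and then assert that ``$w_1$ has a polynomial tail of exponent $\beta-1$.'' But $p_n/(1-p_{-1})$ is the \emph{point mass} $\bP(w_1=n)$, not the tail, and a point-mass bound $p_n\lesssim n^{-(\beta-1)}$ does not yield $\bP(w_1\ge n)\lesssim n^{-(\beta-1)}$: summing it only gives exponent $\beta-2$, and for $\beta\le 2$ it gives nothing at all. Your subsequent union bound $\bP(\Sigma w\ge n/2\mid W_k)\le k\,\bP(w_1\ge n/(2k))\lesssim k^\beta n^{-(\beta-1)}$ silently uses the tail bound you have not actually established. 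Two fixes, both easy. The paper's: since $t_n=\min\{t_1,\tilde t_n\}$ with $\tilde t_n=\bar\tau e^R m_\Delta(\cup_{\ell\ge n}\Delta_\ell)$ nonincreasing, one has $p_n=t_n-t_{n+1}\le\tilde t_n-\tilde t_{n+1}=\bar\tau e^R m_\Delta(\Delta_n)\le e^R m(\tau\ge n)\le e^R C_\tau n^{-\beta}$ — a full power better than your estimate — and then summing gives $\bP(w_1\ge n)\lesssim n^{-(\beta-1)}$. Alternatively, keep your estimate $t_n\lesssim n^{-(\beta-1)}$ but note the telescoping identity $\sum_{j\ge n}p_j=t_n$ (valid since $t_j\to 0$); then $\bP(w_1\ge n)=t_n/(1-p_{-1})$, and your bound on $t_n$ is \emph{already} the tail estimate. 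As written, however, the inference is not justified.

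Two minor points of presentation. First, the restriction to $|w|<cn$ with $c<1/N$ is unnecessary: the paper's simple inclusion $\{h\ge n\}\subset\{\Sigma w\ge n/2\}\cup\{N|w|\ge n/2\}$ applies to all word lengths at once, since $\sum_k k^\beta(1-p_{-1})^k$ converges independently of any cut-off; your version is correct but longer. Second, when you quote $\bar\tau\le 1+C_\tau\sum n^{-\beta}$ as part of the constant, note that $\bar\tau$ never actually enters the final bound: the factors of $\bar\tau$ cancel in $\tilde t_n-\tilde t_{n+1}=\bar\tau e^R m_\Delta(\Delta_n)=e^R m(\tau\ge n+1)$, which is why the paper's constant $C$ is stated to depend only on $C_\tau,\beta,R,N,p_{-1}$.
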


\begin{proof}
  Let $\tilde t_n=\bar\tau e^R m_\Delta(\bigcup_{\ell=n}^\infty \Delta_\ell)$.
Then 
  \[
    p_n = t_n-t_{n+1}\le \tilde t_n-\tilde t_{n+1}
    = \bar{\tau} e^{R} m_\Delta(\Delta_n)
    = e^R m(\tau \geq n)
    \leq e^R C_\tau n^{-\beta}.
  \]
  Using the inequality
  \(
    \sum_{j \geq n} j^{-\beta} \leq n^{-\beta}+\int_n^\infty x^{-\beta}\,dx\leq \beta n^{-(\beta-1)}/(\beta-1)
  \),
  we obtain
  \[
    \bP (w_1 \geq n) 
    = (1-p_{-1})^{-1} \sum_{j \geq n} p_j
    \leq C_\tau e^R (1-p_{-1})^{-1} \frac{\beta n^{-(\beta-1)}}{\beta-1}=
C_1 n^{-(\beta-1)},
  \]
where \(C_1 =  C_\tau e^R (1-p_{-1})^{-1} \beta (\beta-1)^{-1}\).
  It follows that for $w\in W$, $k\ge1$,
  \begin{align*}
    \bP(\Sigma w\ge n\,|\,w\in W_k) & =\bP(w_1+\dots+ w_k\ge n)
\\ & \le \sum_{j=1}^k \bP(w_j\ge n/k)  = k\bP(w_1\ge n/k) \le C_1 k^\beta n^{-(\beta-1)}.
\end{align*}
Hence
\begin{align*}
\bP(\Sigma w\ge n) & =\sum_{k=1}^\infty \bP(\Sigma w\ge n\,|\,w\in W_k)
\, \bP(W_k)
\\ & \le C_1n^{-(\beta-1)}\sum_{k=1}^\infty k^\beta (1-p_{-1})^kp_{-1}
=C_1' n^{-(\beta-1)},
\end{align*}
    where $C_1' = C_1 p_{-1} \sum_{k=1}^\infty k^\beta (1-p_{-1})^k$.
Finally, 
\begin{align*}
 \bP(h(w)\ge n)  & =\bP(\Sigma w+N|w|\ge n)
\\ & \le \bP(\Sigma w\ge n/2)+\bP(|w|\ge n/(2N))
    \leq C_1' 2^{\beta-1} n^{-(\beta-1)} + (1-p_{-1})^{n/(2N)}.
  \end{align*}
  The result follows.
\end{proof}

\subsubsection*{(Stretched) exponential tails}

\begin{prop}
  \label{prop-aexp}
Let \(X_1, \ldots, X_k\) be nonnegative random variables.
  Suppose that there exist \(\alpha>0\), $\gamma\in(0,1]$, such that
  \[
\bP( X_j \geq t\,|\,X_1=x_1,\dots,X_{j-1}=x_{j-1}) \leq C e^{-\alpha t^\gamma}
\]
  for all \(t \geq 0\), $1\le j\le k$ and $x_1,\dots,x_{j-1}\ge0$. Then for all $\beta\in(0,\alpha/2]$, $t\ge0$,
  \[
    \bP(X_1 + \cdots + X_k \geq t)
    \leq (1+\beta C_1)^k e^{-\beta t^\gamma},
  \]
  where \(C_1\) depends continuously 
  on $C$, \(\gamma\) and \(\alpha\).
\end{prop}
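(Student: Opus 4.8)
The plan is to reduce the moment-type bound on the sum to a product of one-step bounds via a telescoping/inductive argument, exploiting the conditional structure of the hypothesis. Write $S_j = X_1+\cdots+X_j$. The key observation is that the hypothesis controls each $X_j$ conditionally on the past, so I would condition on $X_1,\dots,X_{k-1}$ and integrate out $X_k$ last. The natural quantity to track inductively is $\bE\bigl[e^{\beta S_j^\gamma}\wedge e^{\beta \cdot(\text{something})}\bigr]$, but because $\gamma\le 1$ makes $t\mapsto t^\gamma$ subadditive, $(x+y)^\gamma \le x^\gamma + y^\gamma$, which gives $e^{\beta S_k^\gamma}\le e^{\beta S_{k-1}^\gamma}e^{\beta X_k^\gamma}$. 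So the cleaner route is to bound $\bE[e^{\beta S_k^\gamma}]$ directly by $e^{-\beta t^\gamma}\bP(S_k\ge t)\le \bE[e^{\beta S_k^\gamma}]$ (Markov), and then show $\bE[e^{\beta S_k^\gamma}]\le (1+\beta C_1)^k$ by induction on $k$.

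For the inductive step, condition on $X_1,\dots,X_{k-1}$: then $\bE[e^{\beta X_k^\gamma}\mid X_1,\dots,X_{k-1}] = 1 + \int_0^\infty \beta\gamma t^{\gamma-1}e^{\beta t^\gamma}\bP(X_k\ge t\mid X_1,\dots,X_{k-1})\,dt \le 1 + \beta\gamma\int_0^\infty t^{\gamma-1}e^{\beta t^\gamma}Ce^{-\alpha t^\gamma}\,dt$. Since $\beta\le \alpha/2$, the exponent $\beta-\alpha \le -\alpha/2$, so the integral converges and equals a constant $C_1$ depending continuously on $C,\gamma,\alpha$ (one can substitute $u=t^\gamma$ to get $\beta C\int_0^\infty e^{-(\alpha-\beta)u}\,du = \beta C/(\alpha-\beta)\le 2C$, so in fact $C_1$ can be taken as an explicit simple expression). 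Hence $\bE[e^{\beta X_k^\gamma}\mid X_1,\dots,X_{k-1}]\le 1+\beta C_1$ uniformly in the conditioning values. Using $e^{\beta S_k^\gamma}\le e^{\beta S_{k-1}^\gamma}e^{\beta X_k^\gamma}$ and taking conditional expectation, $\bE[e^{\beta S_k^\gamma}\mid X_1,\dots,X_{k-1}]\le e^{\beta S_{k-1}^\gamma}(1+\beta C_1)$; taking full expectation and applying the induction hypothesis $\bE[e^{\beta S_{k-1}^\gamma}]\le (1+\beta C_1)^{k-1}$ closes the induction, with base case $k=0$ trivial ($S_0=0$). Finally Markov's inequality gives $\bP(S_k\ge t)\le e^{-\beta t^\gamma}\bE[e^{\beta S_k^\gamma}]\le (1+\beta C_1)^k e^{-\beta t^\gamma}$.

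The one point requiring a little care is the formula $\bE[e^{\beta X^\gamma}] = 1 + \int_0^\infty \beta\gamma t^{\gamma-1}e^{\beta t^\gamma}\bP(X\ge t)\,dt$ for a nonnegative random variable $X$: this is the layer-cake identity applied to the increasing function $t\mapsto e^{\beta t^\gamma}-1$, whose derivative is $\beta\gamma t^{\gamma-1}e^{\beta t^\gamma}$. This is valid even though $t^{\gamma-1}$ blows up at $0$ when $\gamma<1$, since it is integrable near $0$. I would state this identity and apply it in the conditional form; measurability of the conditional distributions is not an issue here since $W$ is countable (the $X_j$ are functions on a discrete probability space), so all conditionings are elementary. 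The main (and essentially only) obstacle is making sure the constant $C_1$ genuinely depends only on $C,\gamma,\alpha$ and not on $\beta$ or $k$, which is guaranteed precisely by the restriction $\beta\le\alpha/2$ forcing $\alpha-\beta\ge\alpha/2>0$.
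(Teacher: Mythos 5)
Your proposal is correct and follows essentially the same route as the paper: the layer-cake identity (equivalently, the paper's change of variables $t=e^{\beta s^\gamma}$) to bound $\bE[e^{\beta X_j^\gamma}\mid X_1,\dots,X_{j-1}]\le 1+\beta C_1$ using $\alpha-\beta\ge\alpha/2$, then subadditivity of $t\mapsto t^\gamma$ together with the tower property to get $\bE[e^{\beta S_k^\gamma}]\le(1+\beta C_1)^k$, and finally Markov. The only small slip is in the parenthetical explicit evaluation, where the bound $\beta C/(\alpha-\beta)\le 2C$ should be kept in the form $\beta\cdot(2C/\alpha)$ to exhibit $C_1=2C/\alpha$ depending only on $C,\alpha$ (and trivially on $\gamma$) and not on $\beta$; this matches the paper's choice of $C_1$.
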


\begin{proof}
  Note that
    $\bE (e^{\beta X_1^\gamma}) 
    = \int_{0}^{\infty} \bP(e^{\beta X_1^\gamma} \geq t) \, dt
    = 1 + \int_{1}^{\infty} \bP(e^{\beta X_1^\gamma} \geq t) \, dt$.
  Making the substitution \(t = e^{\beta s^\gamma}\), we obtain
  \[
    \bE (e^{\beta X_1^\gamma}) 
    = 1 + \beta \gamma \int_{0}^{\infty} s^{\gamma-1} e^{\beta s^\gamma} \bP(X_1 \geq s) \, ds
    \leq 1 + C \beta \gamma \int_{0}^{\infty} s^{\gamma-1} e^{-(\alpha - \beta) s^\gamma} \, ds
    \le  1+\beta C_1,
  \]
where $C_1=C\gamma\int_0^\infty s^{\gamma-1} e^{-\frac12\alpha s^\gamma}\,ds$.
Similarly, $\bE(e^{\beta X_j^\gamma}\,|\,X_1,\dots,X_{j-1})\le 1+\beta C_1$.
  Hence
  \begin{align*} 
    \bE \bigl(e^{\beta (X_1 + \cdots + X_k)^\gamma}\bigr) 
    & \leq \bE \bigl(e^{\beta (X_1^\gamma + \cdots + X_k^\gamma)}\bigr)
=\bE[\bE(e^{\beta(X_1^\gamma+\dots+X_k^\gamma)}\,|\,X_1,\dots,X_{k-1})]
    \\ & =
\bE[e^{\beta(X_1^\gamma+\dots+X_{k-1}^\gamma)}\bE(e^{\beta X_k^\gamma}\,|\,X_1,\dots,X_{k-1})]
\\ & \le  (1+\beta C_1) \bE(e^{\beta(X_1^\gamma+\dots+X_{k-1}^\gamma)})
\leq \dots \leq (1+\beta C_1)^k
    .
  \end{align*}
  The result follows from Markov's inequality.
\end{proof}

\begin{prop}
  \label{prop-stretch}
Suppose that there exist $C_\tau,\,A>0$, $\gamma\in(0,1]$ such that \(m(\tau \geq n) \leq C_\tau e^{-A n^\gamma}\) for \(n \geq 1\).

Then
  \( \bP(h \geq n) \leq C e^{-B n^\gamma}\) for all \(n \geq 1\),
  where \(C>0\) and \(B \in(0, A)\) depend continuously
  on \(C_\tau\), \(A\), \(\gamma\), $R$, \(N\) and \(p_{-1}\).
\end{prop}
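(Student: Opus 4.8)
The plan is to run the proof in exact parallel with that of Proposition~\ref{prop-poly}, simply replacing the polynomial tail input by the (stretched) exponential one. First I would bound the one-step distribution. As in the proof of Proposition~\ref{prop-poly}, $p_n=t_n-t_{n+1}\le \tilde t_n-\tilde t_{n+1}=e^R m(\tau\ge n)\le C_\tau e^R e^{-An^\gamma}$. Combining this with the elementary estimate
\[
\sum_{j\ge n}e^{-Aj^\gamma}\le e^{-\frac12 An^\gamma}\sum_{j\ge n}e^{-\frac12 Aj^\gamma}\le e^{-\frac12 An^\gamma}\sum_{j\ge1}e^{-\frac12 Aj^\gamma},
\]
which is valid for $\gamma\in(0,1]$ since $j^\gamma\ge n^\gamma$ when $j\ge n$, I obtain
$\bP(w_1\ge n)=(1-p_{-1})^{-1}\sum_{j\ge n}p_j\le C_2 e^{-\frac12 An^\gamma}$ for all $n\ge1$, where $C_2$ depends continuously on $C_\tau$, $A$, $\gamma$, $R$ and $p_{-1}$.

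Next I would apply Proposition~\ref{prop-aexp} to the random variables $w_1,\dots,w_k$, which under $\bP(\cdot\mid w\in W_k)$ are i.i.d.\ with law $\bP(w_1=n)=p_n/(1-p_{-1})$, taking $X_j=w_j$, $\alpha=\tfrac12 A$ and $C=C_2$. This gives, for every $\beta\in(0,\tfrac14 A]$,
\[
\bP(\Sigma w\ge t\mid w\in W_k)=\bP(w_1+\dots+w_k\ge t)\le (1+\beta C_1)^k e^{-\beta t^\gamma},
\]
with $C_1$ depending continuously on $C_2$, $A$, $\gamma$. Since $\Sigma w=0$ on $W_0$ and $\bP(W_k)=(1-p_{-1})^k p_{-1}$, summing over $k\ge1$ yields
\[
\bP(\Sigma w\ge t)\le p_{-1}e^{-\beta t^\gamma}\sum_{k\ge1}\bigl((1-p_{-1})(1+\beta C_1)\bigr)^k .
\]
For this geometric series to converge I must choose $\beta$ small enough that $(1-p_{-1})(1+\beta C_1)<1$; concretely take $\beta=\min\{\tfrac14 A,\ \tfrac{p_{-1}}{2(1-p_{-1})C_1}\}$, so that the common ratio is at most $1-\tfrac12 p_{-1}$ and the sum is bounded by $2/p_{-1}-1$, giving $\bP(\Sigma w\ge t)\le C_3 e^{-\beta t^\gamma}$ with $C_3\le 2$.

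Finally, exactly as at the end of Proposition~\ref{prop-poly}, I would split $h(w)=\Sigma w+N|w|$:
\[
\bP(h\ge n)\le \bP(\Sigma w\ge n/2)+\bP(|w|\ge n/(2N))\le C_3 e^{-\beta 2^{-\gamma}n^\gamma}+(1-p_{-1})^{n/(2N)} .
\]
Since $\gamma\le1$ we have $(1-p_{-1})^{n/(2N)}=e^{-cn}\le e^{-cn^\gamma}$ for $n\ge1$, where $c=-\tfrac{1}{2N}\log(1-p_{-1})>0$, so $\bP(h\ge n)\le C e^{-Bn^\gamma}$ with $B=\min\{\beta 2^{-\gamma},c\}$; note $B\le\tfrac14 A\,2^{-\gamma}<A$ as required, and $B$, $C$ depend continuously on $C_\tau$, $A$, $\gamma$, $R$, $N$, $p_{-1}$.

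The only genuinely delicate point is tracking how the exponent degrades and keeping all constants finite and continuous: $A$ is halved in passing from the tail of $\tau$ to that of $w_1$, halved again (at best) inside Proposition~\ref{prop-aexp}, and reduced by $2^{-\gamma}$ in the splitting of $h$, while simultaneously $\beta$ must be capped below $\tfrac{p_{-1}}{(1-p_{-1})C_1}$ so that the sum over word lengths converges. Verifying that this cascade of constraints is consistent, and that the resulting $B$ and $C$ vary continuously with $(C_\tau,A,\gamma,R,N,p_{-1})$, is routine but is where the care is needed.
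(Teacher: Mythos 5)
Your proof is correct and follows essentially the same route as the paper's: bound $p_n$, deduce a stretched-exponential tail for $w_1$, invoke Proposition~\ref{prop-aexp}, sum the resulting geometric series over word lengths, and split $h(w)=\Sigma w+N|w|$. The one small departure is your estimate of $\sum_{j\ge n}e^{-Aj^\gamma}$: the paper passes to an integral and uses the inequality $x^q\le(2q)^qe^{x/2}$, whereas you simply factor out $e^{-\frac12 An^\gamma}$ and bound the remaining series by a constant depending continuously on $A,\gamma$ — a cleaner and equally valid argument.
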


\begin{proof}
Following the proof of Proposition~\ref{prop-poly},
    $p_n \leq  e^R m(\tau \geq n) \leq e^R C_\tau e^{-A n^\gamma}$.
Using that $x^q\le (2q)^qe^{x/2}$ for all $x,q>0$,
  \begin{align*}
    \sum_{j \geq n} e^{-A j^\gamma}
    & \leq e^{-A n^\gamma} + \int_{n}^{\infty} e^{-A t^\gamma} \, dt
= e^{-A n^\gamma} + \gamma^{-1}  A^{-1/\gamma} \int_{A n^\gamma}^{\infty} e^{-s} s^{\frac{1}{\gamma} - 1} \, ds
\\ & \le  e^{-A n^\gamma} + C_{A,\gamma} \int_{A n^\gamma}^{\infty} e^{-s/2} \, ds \leq 3C_{A, \gamma}\, e^{-\frac12 A n^\gamma},
  \end{align*}
  where $C_{A,\gamma}\ge1$ is a constant depending continuously on \(A, \gamma\).
  Hence
  \[
    \bP (w_1 \geq n) 
    = (1-p_{-1})^{-1} \sum_{j \geq n} p_j
\leq 3(1-p_{-1})^{-1} e^R C_\tau C_{A,\gamma}
     e^{-\frac12 A n^\gamma}.
  \]

By Proposition~\ref{prop-aexp}, for $B\in(0,\frac14 A]$,
\[
\bP(\Sigma w \geq n\,|\,w\in W_k)=\bP(w_1+\dots+w_k\ge n) \leq (1+BC_1)^k e^{-B n^\gamma},
\]
  where $C_1$ depends continuously on
  \(C_\tau\), $A$, $\gamma$, \(R\), \(p_{-1}\).

Let $r=(1+BC_1)(1-p_{-1})$ and
choose \(B\) small enough that $r<1$.  Then
  \[
    \bP(\Sigma w \geq n)
    = \sum_{k=0}^\infty \bP(\Sigma w \geq n\,|\,w\in W_k) \, \bP(W_k)
    \leq e^{-B n^\gamma} p_{-1} \sum_{k=0}^\infty r^k =C' e^{-B n^\gamma},
  \]
  where \(C' = p_{-1} (1-r)^{-1}\).

Finally, 
\begin{align*}
\bP(h(w)\ge n) & =\bP(\Sigma w+N|w|\ge n)
\\ & \le \bP(\Sigma w\ge n/2)+\bP(|w|\ge n/(2N))
    \leq C' e^{- B n^\gamma/2^\gamma} + (1-p_{-1})^{n/(2N)}.
  \end{align*}
  The result follows.
\end{proof}

\begin{pfof}{Theorem~\ref{thm-NUE2}}
  As in the proof of Theorem~\ref{thm-NUE}, we can write
  \(\phi = C_0 (\psi - \psi')\), where \(C_0 = \|\phi\|_{\eta}(1+R^{-1})\),
  and \(\psi, \psi' \in \cA\) with 
  \(\int_\Delta \psi \, dm_\Delta = \int_\Delta \psi' \, dm_\Delta = 1\). 
By Corollary~\ref{cor-A}(b),
  \(|L^n \psi|_{\eta, \ell}, |L^n \psi'|_{\eta, \ell} \leq R\) and 
  \(|L^n \psi|_\infty, |L^n \psi'|_\infty \leq \bar{\tau} e^R\)
  for all \(n \geq 0\).
  
  Next
  \[
    \bigl| (L^n \psi)(x) - (L^n \psi)(y) \bigr| 
    \leq |L^n \psi|_\infty
    \bigl|\log (L^n \psi)(x) - \log (L^n \psi)(y)\bigr|,
  \]
  so \(|L^n \psi|_\eta \leq \bar{\tau} e^R R \). Similarly,
  \(|L^n \psi'|_\eta \leq \bar{\tau} e^R R \).
Hence
  \[
    \|L^n \phi\|_{\eta}
    \leq C_0 ( \|\psi\|_{\eta} + \|\psi'\|_{\eta} )
    \leq C_0 (2 \bar{\tau} e^R + 2 \bar{\tau} e^R R)
    \leq C_1 \|\phi\|_{\eta},
  \]
  where \( C_1 = 2\bar{\tau} e^R (1+R)(1+R^{-1})\).
  Let \(\tilde{\phi} = \sum_{k=0}^{d-1} L^k \phi\).
Then
  \(
    \|\tilde{\phi}\|_{\eta} 
    \leq C_1 d \|\phi\|_{\eta}.
  \)
  
  For \(r=0, \ldots, {d-1}\), define
  \(
    \Delta(r) = \{ (y, \ell) \in \Delta : \ell \equiv r \bmod{d} \}.
  \)
  Then \(f^d : \Delta(r) \to \Delta(r)\) is a mixing Young tower with data
$\{I_k/d\}$, $\delta$, replacing
the data $\{I_k\}$, $\delta$, for $\Delta$.

  Note that \( \sum_{k=0}^{d-1} 1_{\Delta(r)} \circ f^k \equiv 1\).
  Hence for $r=0,\dots,d-1$,
  \[
    \int_{\Delta(r)} \tilde{\phi} \, dm_\Delta 
    = \sum_{k=0}^{d-1} \int_{\Delta} 1_{\Delta(r)} L^k \phi \, dm_\Delta
    = \sum_{k=0}^{d-1} \int_{\Delta} 1_{\Delta(r)} \circ f^k \, \phi \, dm_\Delta
    = \int_\Delta \phi \, dm_\Delta = 0.
  \]

Thus for each $r=0,\dots,d-1$, we are in the situation of Theorem~\ref{thm-NUE} with $\Delta$, $f$, $\phi$ replaced by $\Delta(r)$, $f^d$, $\tilde\phi$.
In the case of polynomial tails, 
  \begin{align*}
    \int_\Delta \Bigl| \sum_{k=0}^{d-1} L^{nd+k} \phi \Bigr| \, dm_\Delta
   &  = \sum_{r=0}^{d-1} \int_{\Delta(r)} \Bigl| \sum_{k=0}^{d-1} L^{nd+k} \phi \Bigr| \, dm_\Delta
    = \sum_{r=0}^{d-1} \int_{\Delta(r)} 
    |L^{nd} \tilde{\phi}| \, dm_\Delta
\\ & \le C\|\tilde\phi\|_\eta(nd)^{-(\beta-1)}
 \le CC_1d^{-(\beta-1)}\|\phi\|_\eta\, n^{-(\beta-1)},
  \end{align*}
and similarly for the (stretched) exponential case.
\end{pfof}

\section{Proof for nonuniformly hyperbolic transformations}
\label{sec-NUH}

In this section we prove Theorem~\ref{thm-NUH}.

The separation time for $\bar{F}:\bar{Y}\to \bar{Y}$ extends to a separation time on
$\bar{\Delta}$:
define $s((y,\ell),(y',\ell'))=s(y,y')$
if $\ell=\ell'$ and $0$ otherwise.
Recall that we fixed \(\theta \in (0,1)\). Define the metric $d_\theta$ on
\(\bar{\Delta}\) by setting $d_\theta(p,q)=\theta^{s(p,q)}$.

Recall that the transfer operator $P$
corresponding to $\bar F:\bar Y\to\bar Y$ and $\bar\mu_Y$ has the form
$(P\phi)(y)=\sum_{a\in\alpha}\zeta(y_a)\phi(y_a)$.
Also, $(P^n\phi)(y)=\sum_{a\in\alpha_n}\zeta_n(y_a)\phi(y_a)$ where
$\alpha_n = \bigvee_{k=0}^{n-1} \bar{F}^{-k} \alpha $ is the partition of $\bar Y$ into $n$-cylinders and 
$\zeta_n=\zeta\,\zeta\circ \bar F\cdots \zeta\circ \bar F^{n-1}$.

\begin{prop}
  \label{prop-zetan}
  Let $a \in \alpha_n$ and $y, y' \in a$. Then
  (a) $K_1^{-1}\bar\mu_Y(a)\le \zeta_n(y)\le K_1\bar\mu_Y(a)$,
  (b) $|\zeta_n(y)-\zeta_n(y')|\le K_1\bar\mu_Y(a)d_\theta(\bar F^ny,\bar F^ny')$,
  where $K_1=e^{(1-\theta)^{-1}K}(1-\theta)^{-1}K$.
\end{prop}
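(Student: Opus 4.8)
The plan is to reduce both statements to a single distortion estimate for $\log\zeta_n$ along the $n$-cylinder $a$, and then invoke the elementary inequalities \eqref{eq-bound} and \eqref{eq-ineq} from Section~\ref{sec-UE}. Throughout, write $R_1=(1-\theta)^{-1}K$, so that $K_1=e^{R_1}R_1$, and recall that here $\eta=1$ and $\lambda=1/\theta$, and that $d_\theta(p,q)=\theta^{s(p,q)}\le1$.

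\emph{Step 1 (distortion chain).} First I would prove that, for $y,y'\in a$,
\[
|\log\zeta_n(y)-\log\zeta_n(y')|\le R_1\, d_\theta(\bar{F}^ny,\bar{F}^ny').
\]
Since $\zeta_n=\zeta\,(\zeta\circ\bar{F})\cdots(\zeta\circ\bar{F}^{n-1})$, we have $\log\zeta_n=\sum_{k=0}^{n-1}\log\zeta\circ\bar{F}^k$, and for $0\le k\le n-1$ the points $\bar{F}^ky$ and $\bar{F}^ky'$ lie in a common element of $\alpha$ (because $y,y'$ lie in the common $n$-cylinder $a$), so \eqref{eq-zeta-dist} with $\eta=1$ gives $|\log\zeta(\bar{F}^ky)-\log\zeta(\bar{F}^ky')|\le K\, d_\theta(\bar{F}^{k+1}y,\bar{F}^{k+1}y')$. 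The bookkeeping point is that the separation time drops by exactly one under $\bar{F}$ as long as the two points stay in a common cylinder, so $s(\bar{F}^{k+1}y,\bar{F}^{k+1}y')=s(y,y')-(k+1)=s(\bar{F}^ny,\bar{F}^ny')+(n-k-1)$, whence $d_\theta(\bar{F}^{k+1}y,\bar{F}^{k+1}y')=\theta^{\,n-k-1}d_\theta(\bar{F}^ny,\bar{F}^ny')$. Summing and using $\sum_{j\ge0}\theta^j=(1-\theta)^{-1}$ gives the displayed bound; in particular $|\log\zeta_n(y)-\log\zeta_n(y')|\le R_1$.

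\emph{Step 2 (part (a)).} Iterating the full-branch hypothesis, $\bar{F}^n$ restricts to a measure-theoretic bijection of $a$ onto $\bar{Y}$, so $\psi:\bar{Y}\to(0,\infty)$, $\psi(y)=\zeta_n(y_a)$ with $y_a\in a$ the $\bar{F}^n$-preimage of $y$, is well defined, and Step~1 shows $|\psi|_{\eta,\ell}=|\log\psi|_\eta\le R_1$. Using that $\bar\mu_Y$ is $\bar{F}$-invariant and the explicit formula for $P^n$, we get $\int_{\bar{Y}}\psi\,d\bar\mu_Y=\int_{\bar{Y}}P^n1_a\,d\bar\mu_Y=\bar\mu_Y(a)$. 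The analogue of \eqref{eq-bound} then yields $e^{-R_1}\bar\mu_Y(a)\le\psi\le e^{R_1}\bar\mu_Y(a)$, and evaluating at $y=\bar{F}^nz$ for $z\in a$ (so that $\psi(y)=\zeta_n(z)$) gives $K_1^{-1}\bar\mu_Y(a)\le\zeta_n(z)\le K_1\bar\mu_Y(a)$.

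\emph{Step 3 (part (b)).} For $y,y'\in a$, apply \eqref{eq-ineq} with $a=\zeta_n(y)$, $b=\zeta_n(y')$, and insert $\max\{\zeta_n(y),\zeta_n(y')\}\le e^{R_1}\bar\mu_Y(a)$ from Step~2 together with the distortion bound of Step~1:
\[
|\zeta_n(y)-\zeta_n(y')|\le e^{R_1}\bar\mu_Y(a)\cdot R_1\, d_\theta(\bar{F}^ny,\bar{F}^ny')=K_1\,\bar\mu_Y(a)\, d_\theta(\bar{F}^ny,\bar{F}^ny').
\]
The only substantive step is Step~1, and the only delicate point there is the separation-time arithmetic: verifying that $\bar{F}^ky,\bar{F}^ky'$ remain in a common partition element for $k<n$ and that $s$ decreases by exactly one at each such step. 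Everything else is a geometric series plus the two one-line inequalities already recorded in Section~\ref{sec-UE}.
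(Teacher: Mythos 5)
Your proof is correct and takes essentially the same route as the paper: the key step in both is the distortion estimate $|\log\zeta_n(y)-\log\zeta_n(y')|\le (1-\theta)^{-1}K\,d_\theta(\bar F^ny,\bar F^ny')$ via the geometric series over the cylinder, followed by $\inf_a\zeta_n\le\int_{\bar Y}P^n1_a\,d\bar\mu_Y=\bar\mu_Y(a)$ (you phrase this through \eqref{eq-bound}, the paper writes it directly, but it is the same computation), and then \eqref{eq-ineq} for part (b). You spell out the separation-time bookkeeping more explicitly than the paper, which is helpful but not a different argument.
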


\begin{proof}
  It follows from~\eqref{eq-zeta-dist} that
  \[
    |\log\zeta_n(y)-\log\zeta_n(y')|\le (1-\theta)^{-1}Kd_\theta(\bar F^ny,\bar F^ny').
  \]
  Hence
  $ \sup_a \zeta_n \le e^{(1-\theta)^{-1}K}\inf_a\zeta_n$ and
  \[ \SMALL
  \inf_a\zeta_n=\inf P^n1_a\le \int_{\bar Y}P^n1_a\,d\bar\mu_Y=\int_{\bar Y}1_a\,d\bar\mu_Y=\bar\mu_Y(a).
  \]
  Thus \(\sup_a \zeta_n \leq K_1 \bar{\mu}_Y(a)\). Similarly,
  \(\inf_a \zeta_n \geq K_1^{-1} \bar{\mu}_Y(a)\).
  Finally,
  \begin{align*}
    |\zeta_n(y)-\zeta_n(y')|
    \leq \sup_a \zeta_n \, |\log \zeta_n(y) - \log \zeta_n(y')|
    \le  K_1\bar\mu_Y(a)d_\theta(\bar F^ny,\bar F^ny').
  \end{align*}
\end{proof}

The transfer operator $L$ corresponding to $\bar f:\bar\Delta\to\bar\Delta$
and $\bar\mu_\Delta$
can be written as 
\[
  (L\phi)(p)=\sum_{\bar fq=p}g(q)\phi(q),
  \qquad \text{where} \qquad
  g(y, \ell) = 
  \begin{cases} 
    \zeta(y), & \ell = \tau(y)-1, \\
    1, & \ell < \tau(y)-1
  \end{cases}
  .
\]
Then
$(L^n\phi)(p)=\sum_{\bar f^nq=p}g_n(q)\phi(q)$
where $g_n=g\,g\circ\bar f\cdots g\circ\bar f^{n-1}$.

Define $\bar{\Delta}_0 = \{ (y, \ell) \in \bar{\Delta} \colon \ell = 0 \}$
and $\Delta_0 = \{ (y, \ell) \in \Delta \colon \ell = 0 \}$.

\begin{prop} \label{prop-gn}
Let $p,p'\in \bar{\Delta}$ with $s(p,p')\ge n\ge1$.  Then
(a) $\sum_{\bar f^nq=p}g_n(q)=1$,
(b) $|g_n(p)-g_n(p')|\le K_1^2 g_n(p)d_\theta(\bar f^np,\bar f^np')$.
\end{prop}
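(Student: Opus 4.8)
The plan is to treat parts~(a) and~(b) separately, with~(a) being essentially immediate and~(b) reduced to Proposition~\ref{prop-zetan} after a short bookkeeping argument about the structure of the tower.

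For~(a), I would first check directly from the formula for $L$ that $L$ fixes the constant function: at a point $(y,\ell)$ with $\ell\ge1$ the unique preimage contributes the factor $g(y,\ell-1)=1$, whereas at $(y,0)$ the preimages are the points $(y_a,\tau(y_a)-1)$ with $a\in\alpha$, so the value is $\sum_{a\in\alpha}\zeta(y_a)=(P1)(y)=1$ since $\bar\mu_Y$ is $\bar F$-invariant. Iterating, the constant function is fixed by $L^n$, and since $(L^n1)(p)=\sum_{\bar f^nq=p}g_n(q)$ this is exactly part~(a). (Note that~(a) holds for every $p\in\bar\Delta$; the hypothesis $s(p,p')\ge n$ is only needed for~(b).)

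For~(b), the hypothesis $s(p,p')\ge n\ge1$ forces $p=(y,\ell)$ and $p'=(y',\ell)$ with the same level $\ell$ and $s(y,y')\ge n$, hence $\tau(\bar F^iy)=\tau(\bar F^iy')$ for $0\le i\le n-1$. I would then unwind the definition of $g_n$ along the orbit of $p$: setting $S_0=0$ and $S_m=(\tau(y)-\ell)+\tau(\bar Fy)+\cdots+\tau(\bar F^{m-1}y)$ for $m\ge1$, the orbit reaches the base point $(\bar F^my,0)$ exactly at time $S_m$, and the only times in $\{0,\dots,n-1\}$ at which $g$ differs from $1$ are the times $S_m-1$, at which $g$ equals $\zeta(\bar F^{m-1}y)$. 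Thus, writing $M=M(p,n)=\max\{m\ge0:S_m\le n\}$, one obtains $g_n(p)=\zeta_M(y)$ and $\bar f^np=(\bar F^My,\ell^*)$ with an explicit level $\ell^*$ ($=n-S_M$ if $M\ge1$, and $=\ell+n$ if $M=0$). Since every $S_m\ge m$ we have $M\le S_M\le n$, so $M$, $S_M$ and $\ell^*$ depend only on $\ell$ and on $\tau(y),\dots,\tau(\bar F^{M-1}y)$; these data agree for $y$ and $y'$ because $s(y,y')\ge n\ge M$, and therefore $g_n(p')=\zeta_M(y')$ and $\bar f^np'=(\bar F^My',\ell^*)$ with the same $M$ and $\ell^*$.

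Once this is in place, the conclusion is a single application of Proposition~\ref{prop-zetan} to the common $M$-cylinder $a\in\alpha_M$ containing $y$ and $y'$ (they lie in one $M$-cylinder since $s(y,y')\ge n\ge M$). Because $\bar f^np$ and $\bar f^np'$ have the same level, $d_\theta(\bar f^np,\bar f^np')=\theta^{s(\bar F^My,\bar F^My')}=d_\theta(\bar F^My,\bar F^My')$, and Proposition~\ref{prop-zetan}(b) combined with the lower bound in~(a) gives
\[
|g_n(p)-g_n(p')|=|\zeta_M(y)-\zeta_M(y')|\le K_1\bar\mu_Y(a)\,d_\theta(\bar F^My,\bar F^My')\le K_1^2\zeta_M(y)\,d_\theta(\bar f^np,\bar f^np'),
\]
which is the claim (the case $M=0$ is trivial, since then $g_n\equiv1$). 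The main obstacle is purely the bookkeeping in the unwinding step — in particular verifying carefully that $p$ and $p'$ make the same number $M$ of returns to the base and land at the same level after $n$ steps, including degenerate situations such as $\tau(y)=1$ or $p$ already sitting at a top level $\ell=\tau(y)-1$ — after which the distortion estimate reduces immediately to Proposition~\ref{prop-zetan}.
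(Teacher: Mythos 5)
Your proof is correct and follows essentially the same route as the paper: for (a) you verify $L1=1$ directly from the formula for the transfer operator, and for (b) you reduce $g_n$ to $\zeta_M$ where $M$ is the number of returns to the base, match $M$ (and the landing level) across $p$ and $p'$ using $s(p,p')\ge n$, and then invoke Proposition~\ref{prop-zetan}. The paper's proof is terser (it simply asserts $r(p)=r(p')$ and $g_n(p)=\zeta_{r(p)}(y)$), whereas you make the underlying bookkeeping explicit via the return-time sums $S_m$; the extra care to check $M\le S_M\le n$ and that $M$, $\ell^*$ depend only on cylinder data of depth $<n$ is exactly the justification the paper leaves implicit.
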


\begin{proof}  Part (a) is immediate since $L^n1=1$.
Let $r(p)=\#\{j\in\{1,\dots,n\}:\bar f^jp\in \bar{\Delta}_0 \}$.
Note that $r(p)=r(p')$.
If $r(p)=0$, then $g_n(p) = g_n(p')=1$
and (b) holds trivially.  Otherwise, 
we can write $p=(y,\ell)$, $p'=(y',\ell)$ with $y,y' \in \bar{Y}$
and $\ell \geq 0$.
Then $g_n(p)=\zeta_{r(p)}(y)$ and $g_n(p')=\zeta_{r(p)}(y')$.

Let $a\in\alpha_{r(p)}$ be the cylinder containing $y$ and $y'$. 
Then by Proposition~\ref{prop-zetan},
\begin{align*}
  |g_n(p)-g_n(p')|
  & = |\zeta_{r(p)}(y)-\zeta_{r(p)}(y')|
  \le K_1\bar\mu_Y(a) d_\theta(\bar F^{r(p)} y,\bar F^{r(p)} y')
  \\ & \le K_1^2 \zeta_{r(p)}(y)d_\theta(\bar f^np,\bar f^np')=K_1^2 g_n(p)d_\theta(\bar f^np,\bar f^np'),
\end{align*}
proving (b).
\end{proof}

\paragraph{Nonuniform expansion/contraction}

Recall that $\pi:\Delta\to M$ denotes
the projection $\pi(y,\ell)=T^\ell y$.
For $p=(x,\ell),q=(y,\ell)\in\Delta$, we write $q\in W^s(p)$ if $y\in W^s(x)$ and
$q\in W^u(p)$ if $y\in W^u(x)$.
Conditions (P1) translate as follows.
\begin{itemize}
\item[(P2)]
There exist constants $K_0>0$, $\rho_0\in(0,1)$ such that
for all $p,q\in \Delta$, $n\ge1$,
\begin{itemize}
\item[(i)]  If $q\in W^s(p)$, then $d(\pi f^np,\pi f^nq)\le K_0\rho_0^{\kappa_n(p)}$, and
\item[(ii)]  If $q\in W^u(p)$, then $d(\pi f^np,\pi f^nq)\le K_0\rho_0^{s(p,q)-\kappa_n(p)}$, 
\end{itemize}
\end{itemize}
where $\kappa_n(p)=\#\{j=1,\dots,n:f^jp\in \Delta_0\}$ is the number of 
returns of $p$ to $\Delta_0$ by time $n$.
It is immediate from conditions (P2) and the product structure on $Y$ that
\begin{align} \label{eq-W}
d(\pi f^n p,\pi f^n q)\le 2 K_0\rho_0^{\min\{\kappa_n(p),s(p,q)-\kappa_n(p)\}}\quad\text{for all $p,q\in\Delta$, $n\ge1$}.
\end{align}

\paragraph{Approximation of observables}
Given $C^\eta$ observables $v,w:M\to\R$, let
$\phi=v\circ \pi,\,\psi=w\circ \pi:\Delta\to\R$ be the lifted observables.
For each $n\ge1$, define $\tilde \phi_n:\Delta\to\R$,
\[
\tilde \phi_n(p)=\inf\{\phi(f^nq):s(p,q)\ge 2\kappa_n(p)\}.
\]

\begin{prop} \label{prop-tildev}
The function $\tilde \phi_n$ lies in $L^\infty(\Delta)$ and projects down to a Lipschitz 
observable $\bar \phi_n:\bar\Delta\to\R$.  Moreover, setting $K_2=1+K_1^2+2^\eta K_0^\eta$, $\rho=\rho_0^\eta$
and $\theta=\rho$,
\begin{itemize}
\item[(a)] $|\bar \phi_n|_\infty=|\tilde \phi_n|_\infty\le |v|_\infty$,
(b) $|\phi\circ f^n(p)-\tilde \phi_n(p)| \le 2^\eta K_0^\eta\|v\|_{C^\eta}\rho^{\kappa_n(p)}$ for $p\in\Delta$,
\item[(c)]  $\|L^n\bar \phi_n\|_\theta\le K_2\|v\|_{C^\eta}$, for all $n\ge1$.  
\end{itemize}
\end{prop}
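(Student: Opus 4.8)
The plan is as follows. Claims (a) and (b) are essentially immediate from the definition of $\tilde\phi_n$ together with \eqref{eq-W}; the substance is (c), whose heart is a preimage-by-preimage comparison of $L^n\bar\phi_n$ combining \eqref{eq-W}, Proposition~\ref{prop-zetan} and part (b). Before anything else I would check that $\tilde\phi_n$ descends to $\bar\Delta$: the return count $\kappa_n$ and, for points $p,r$ at a common level, the separation time $s(p,r)$ depend only on the $\bar Y$-coordinates, hence are constant along stable leaves; so if $p'\in W^s(p)$ the admissible set $\{f^nr:s(p,r)\ge 2\kappa_n(p)\}$ in the definition of $\tilde\phi_n(p)$ is unchanged, giving $\tilde\phi_n(p')=\tilde\phi_n(p)$ and a well-defined $\bar\phi_n:\bar\Delta\to\R$.

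For (a): the admissible set always contains $p$, so $\tilde\phi_n(p)$ is a genuine infimum of values of $v\circ\pi$, whence $|\tilde\phi_n|_\infty=|\bar\phi_n|_\infty\le|v|_\infty$. For (b): since $p$ is admissible, $\tilde\phi_n(p)\le\phi(f^np)$; and for any admissible $r$ one has $s(p,r)-\kappa_n(p)\ge\kappa_n(p)$, so \eqref{eq-W} gives $d(\pi f^np,\pi f^nr)\le 2K_0\rho_0^{\kappa_n(p)}$, hence $|\phi(f^np)-\phi(f^nr)|\le|v|_\eta(2K_0)^\eta\rho^{\kappa_n(p)}\le 2^\eta K_0^\eta\|v\|_{C^\eta}\rho^{\kappa_n(p)}$; taking the infimum over $r$ yields (b).

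For (c), the sup-norm bound is immediate: $(L^n\bar\phi_n)(\bar p)=\sum_{\bar f^n\bar q=\bar p}g_n(\bar q)\bar\phi_n(\bar q)$ with $\sum g_n=1$ (Proposition~\ref{prop-gn}(a)), so $|L^n\bar\phi_n|_\infty\le|\bar\phi_n|_\infty\le|v|_\infty$. For the Lipschitz constant I would fix $\bar p,\bar p'$ at the same level with $s:=s(\bar p,\bar p')$ (different levels are trivial: then $d_\theta=1$ and the sup-norm bound suffices), match preimages cylinder by cylinder as $\bar q_a\leftrightarrow\bar q_a'$, and split
\[
(L^n\bar\phi_n)(\bar p)-(L^n\bar\phi_n)(\bar p')=\sum_a g_n(\bar q_a)\bigl(\bar\phi_n(\bar q_a)-\bar\phi_n(\bar q_a')\bigr)+\sum_a\bigl(g_n(\bar q_a)-g_n(\bar q_a')\bigr)\bar\phi_n(\bar q_a').
\]
Write $k=\kappa_n(\bar q_a)$; this equals $\kappa_n(\bar q_a')$ since the two preimages share the cylinder, hence the level and the first $k$ branches of $\bar F$, and one checks $s(\bar q_a,\bar q_a')=k+s$, so the bases lie in a common $k$-cylinder of $\bar F$. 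Proposition~\ref{prop-zetan}(b) then gives $|g_n(\bar q_a)-g_n(\bar q_a')|\le K_1^2 g_n(\bar q_a)\theta^s$, so the second sum is $\le K_1^2|v|_\infty\theta^s$. For the first sum: if $s\ge k$ then $s(\bar q_a,\bar q_a')=k+s\ge 2k$, so (as in the descent step) the admissible sets defining $\bar\phi_n(\bar q_a)$ and $\bar\phi_n(\bar q_a')$ coincide and the term vanishes; if $s<k$, then $\rho^k\le\theta^s$ (since $k>s$) and \eqref{eq-W} applied to the pair $\bar q_a,\bar q_a'$ gives $d(\pi f^nq_a,\pi f^nq_a')\le 2K_0\rho_0^{\min(k,s)}=2K_0\rho_0^s$, so combining these with (b) bounds $|\bar\phi_n(\bar q_a)-\bar\phi_n(\bar q_a')|$ by a multiple of $2^\eta K_0^\eta\|v\|_{C^\eta}\theta^s$. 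Summing against $g_n$ (total mass $1$) and adding the two sums yields $\|L^n\bar\phi_n\|_\theta\le K_2\|v\|_{C^\eta}$; the standalone Lipschitz property of $\bar\phi_n$ falls out similarly, since $\bar\phi_n(\bar q)\neq\bar\phi_n(\bar q')$ forces $s(\bar q,\bar q')<2\kappa_n\le 2n$.

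The main obstacle is precisely the first sum, i.e.\ controlling $|\bar\phi_n(\bar q_a)-\bar\phi_n(\bar q_a')|$ by $\theta^s$ \emph{uniformly in $n$}: the crude bound from (b) produces error terms $\rho^{\kappa_n}$ a priori unrelated to $\theta^s$, and the resolution is the elementary but essential observation that $\kappa_n\le n$, so $s(\bar q_a,\bar q_a')=\kappa_n+s$ puts one in the regime $\kappa_n>s$ only when $s<n$, and there $\rho^{\kappa_n}\le\theta^s$. I expect the rest to be routine bookkeeping; the only genuinely fiddly point is assembling the constants into exactly $K_2=1+K_1^2+2^\eta K_0^\eta$ (one must avoid double-counting the $2^\eta K_0^\eta$ arising from (b) and from \eqref{eq-W}).
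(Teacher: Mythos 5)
Your descent argument and parts (a), (b) match the paper exactly, as does the $I_1+I_2$ decomposition of $(L^n\bar\phi_n)(\bar p)-(L^n\bar\phi_n)(\bar p')$, the bound on $I_2$ via Proposition~\ref{prop-gn}(b), and the dichotomy $s\ge\kappa_n$ (vanishing) versus $s<\kappa_n$. The place where you fall short is precisely the point you flag: bounding $|\bar\phi_n(\bar q_a)-\bar\phi_n(\bar q_a')|$ in the regime $s<\kappa_n$. Your route via the triangle inequality through $\phi(f^nq_a)$ and $\phi(f^nq_a')$ produces three terms each of size $2^\eta K_0^\eta\|v\|_{C^\eta}\theta^s$, hence $K_2'=1+K_1^2+3\cdot 2^\eta K_0^\eta$, not the claimed $K_2=1+K_1^2+2^\eta K_0^\eta$. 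This is not a double-counting artefact that vanishes on closer inspection; the triangle inequality genuinely costs the factor $3$.

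The paper's way to recover the constant is to compare the two infima directly rather than through the anchor points $f^nq_a$, $f^nq_a'$. Take $\hat q,\hat q'$ (near-)achieving the infima defining $\bar\phi_n(\bar q_a)$, $\bar\phi_n(\bar q_a')$, so $s(\hat q,q_a)\ge 2\kappa_n$ and $s(\hat q',q_a')\ge 2\kappa_n$. Since we are in the case $s(\bar q_a,\bar q_a')=\kappa_n+s<2\kappa_n$, the ultrametric property of separation time forces $s(\hat q,\hat q')=s(q_a,q_a')=\kappa_n+s$ automatically, whence $\min\{\kappa_n(\hat q),\,s(\hat q,\hat q')-\kappa_n(\hat q)\}=s$. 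A single application of~\eqref{eq-W} and H\"older continuity then gives $|\phi(f^n\hat q)-\phi(f^n\hat q')|\le 2^\eta K_0^\eta\|v\|_{C^\eta}\theta^s$, and the usual sup/inf comparison yields $|\bar\phi_n(\bar q_a)-\bar\phi_n(\bar q_a')|\le 2^\eta K_0^\eta\|v\|_{C^\eta}\theta^s$, with no factor of $3$. Everything else in your write-up is sound and in line with the paper.
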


\begin{proof}
This is standard, see for example~\cite[Proposition~B.5]{MT14}.  We give the details for completeness.
If $s(p,q)\ge 2\kappa_n(p)$, then $\tilde \phi_n(p)=\tilde \phi_n(q)$.
It follows that $\tilde \phi_n$ is piecewise constant on a measurable partition 
of $\Delta$, and hence is measurable, and that $\bar \phi_n$ is well-defined.
Part (a) is immediate.   

Recall that $\phi=v\circ\pi$ where $v:M\to\R$ is $C^\eta$.
Let $p\in\Delta$.
By~\eqref{eq-W} and the definition of $\tilde \phi_n$,
\begin{align*}
|\phi\circ f^n(p)-\tilde \phi_n(p)| &=|v(\pi f^np)-v(\pi f^nq)|
 \le \|v\|_{C^\eta}d(\pi f^np,\pi f^nq)^\eta
\\ & \le 2^\eta K_0^\eta\rho^{\min\{\kappa_n(p),s(p,q)-\kappa_n(p)\}},
\end{align*}
where $q$ is such that $s(p,q)\ge 2\kappa_n(p)$.
In particular, $s(p,q)-\kappa_n(p)\ge \kappa_n(p)$, so we obtain part~(b).

For part (c), first note that
$|L^n\bar \phi_n|_\infty \le |\bar \phi_n|_\infty\le |v|_\infty$.
Let $\bar p = (y, \ell) \in \bar{\Delta}$ and
$\bar p' = (y', \ell') \in \bar{\Delta}$.
If $d_\theta(\bar p,\bar p')=1$, then
\[
  \bigl|(L^n\bar{\phi})(\bar p) - (L^n\bar{\phi})(\bar p')\bigr|
  \leq 2 |v|_{\infty}
  = 2 |v|_{\infty} d_\theta(\bar p,\bar p').
\]
Otherwise, we can write
\begin{align*} 
  (L^n\bar \phi_n)(\bar p)- (L^n\bar \phi_n)(\bar p') & = I_1+I_2,
\end{align*}
where
\begin{align*}
  I_1 
  = \sum_{\bar f^n\bar q=\bar p} g_n(\bar q) \bigl(\bar \phi_n(\bar q)
    - \bar \phi_n(\bar q')\bigr),  
  \qquad 
  I_2 
  = \sum_{\bar f^n\bar q=\bar p} \bigl(g_n(\bar q)-g_n(\bar q')\bigr)
    \bar \phi_n(\bar q').
\end{align*}
As usual, preimages $\bar q,\bar q'$ are matched up so that
$s(\bar q,\bar q') = \kappa_n(\bar q) + s(\bar p,\bar p')$.

By Proposition~\ref{prop-gn}, 
\[
  \SMALL
  |I_2|
  \le K_1^2|v|_\infty
    \sum_{\bar f^n\bar q=\bar p} g_n(\bar q) 
    d_\theta(\bar f^n \bar q, \bar f^n \bar q')
  = K_1^2|v|_\infty d_\theta(\bar p,\bar p').
\]
We claim that 
$
  |\bar \phi_n(\bar q)-\bar \phi_n(\bar q')|
  \le 2^\eta K_0^\eta \|v\|_{C^\eta} \rho^{s(\bar p,\bar p')}
$.
Taking $\theta=\rho$, 
it then follows from Proposition~\ref{prop-gn}(a) that
$|I_1|\le 2^\eta K_0^\eta\|v\|_{C^\eta}d_\theta(\bar p,\bar p')$.

It remains to verify the claim.
Choose $q,q'\in\Delta$ that project onto 
$\bar q,\bar q'\in\bar\Delta$, so
\begin{align*} 
  s(q,q')=s(\bar q,\bar q')
  = \kappa_n(\bar q)+s(\bar p,\bar p').
\end{align*}
Write
$\bar \phi_n(\bar q)-\bar \phi_n(\bar q')=
\phi\circ f^n(\hat q)-\phi\circ f^n(\hat q')$,
where 
$\hat q,\hat q'\in\Delta$ satisfy
\begin{align*} \label{eq-qhat}
  s(\hat q, q)\ge 2\kappa_n(\bar q)
  \qquad \text{and} \qquad
  s(\hat q', q')\ge 2\kappa_n(\bar q).
\end{align*}
Since $\bar \phi_n(\bar q)=\bar \phi_n(\bar q')$ 
if $s(\bar q,\bar q') \ge 2\kappa_n(\bar q)$, 
we may suppose without loss that 
\begin{align*}
  s(\hat q, \hat q') = s(\bar{q},\bar q') \le 2\kappa_n(\bar q) = 2\kappa_n(\hat q).
\end{align*}
Then 
\begin{align*}
  s(\bar p,\bar p') = s(\hat q, \hat q') - \kappa_n(\hat q) \leq \kappa_n(\hat q).
\end{align*}
As in part (b),
\begin{align*}
  |\phi\circ f^n(\hat q)-\phi\circ f^n(\hat q')| 
  & \le 2^\eta K_0^\eta\|v\|_{C^\eta} 
    \rho^{\min\{\kappa_n(\hat q), \, s(\hat q,\hat q')
    - \kappa_n(\hat q)\}}
 = 2^\eta K_0^\eta\|v\|_{C^\eta} 
    \rho^{s(\bar p,\bar p')}
  .
\end{align*}
This completes the proof of the claim.
\end{proof}

\begin{cor} \label{cor-NUH}
Suppose $\{b_n\}$, $n \geq 0$ is a nonnegative non-increasing sequence,
and $|L^n\phi|_1\le b_n\|\phi\|_\theta$ for all all $n$ and all mean zero 
$d_\theta$-Lipschitz functions $\phi:\bar\Delta\to\R$. 
Then
\[
  \Big|
    \int_M v\,w\circ T^n\,d\mu-\int_M v\,d\mu\int_M w\,d\mu
  \Big|
  \le \bigl(
    2^\eta K_0^\eta|\rho^{\kappa_{[n/2]}}|_1 + 2 K_2\,b_{[n/2]}
  \bigr) \|v\|_{C^\eta}\|w\|_{C^\eta}.
\]
\end{cor}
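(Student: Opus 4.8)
The plan is to lift the correlation integral to the tower $\Delta$, approximate the lifted observables by functions that factor through the quotient tower $\bar\Delta$ using Proposition~\ref{prop-tildev}, and then invoke the hypothesised decay of $|L^n\cdot|_1$ on $\bar\Delta$.

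First, since $\pi_*\mu_\Delta=\mu$ and $\pi\circ f=T\circ\pi$ (so $T^n\circ\pi=\pi\circ f^n$), putting $\phi=v\circ\pi$ and $\psi=w\circ\pi$ gives
\[
\int_M v\,w\circ T^n\,d\mu-\int_M v\,d\mu\int_M w\,d\mu
=\int_\Delta \phi\,\psi\circ f^n\,d\mu_\Delta-\int_\Delta\phi\,d\mu_\Delta\int_\Delta\psi\,d\mu_\Delta.
\]
Write $m=[n/2]$, so $m\le n-m$ and hence $\kappa_m\le\kappa_{n-m}$ pointwise, giving $|\rho^{\kappa_{n-m}}|_1\le|\rho^{\kappa_m}|_1$. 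I would factor $\psi\circ f^n=(\psi\circ f^{n-m})\circ f^m$ and use Proposition~\ref{prop-tildev}(b) (applied to $w$) to replace $\psi\circ f^{n-m}$ by $\tilde\psi_{n-m}$, at the cost of at most $|v|_\infty\,2^\eta K_0^\eta\|w\|_{C^\eta}|\rho^{\kappa_m}|_1$. By $f$-invariance of $\mu_\Delta$,
$\int_\Delta\phi\,(\tilde\psi_{n-m}\circ f^m)\,d\mu_\Delta=\int_\Delta(\phi\circ f^m)\,(\tilde\psi_{n-m}\circ f^{2m})\,d\mu_\Delta$,
and I would replace $\phi\circ f^m$ by $\tilde\phi_m$ (Proposition~\ref{prop-tildev}(b) for $v$, using $|\bar\psi_{n-m}|_\infty\le|w|_\infty$ from part~(a) to bound the error by $|w|_\infty\,2^\eta K_0^\eta\|v\|_{C^\eta}|\rho^{\kappa_m}|_1$). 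Now $\tilde\phi_m$ and $\tilde\psi_{n-m}\circ f^{2m}$ are both constant along stable leaves (the latter because $f^{2m}$ maps stable leaves into stable leaves), so they descend to $\bar\Delta$ and the integral equals $\int_{\bar\Delta}\bar\phi_m\,(\bar\psi_{n-m}\circ\bar f^{2m})\,d\bar\mu_\Delta=\int_{\bar\Delta}(L^{2m}\bar\phi_m)\,\bar\psi_{n-m}\,d\bar\mu_\Delta$.

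Next I would decorrelate on $\bar\Delta$. Factor $L^{2m}\bar\phi_m=L^m h$ with $h=L^m\bar\phi_m$; by Proposition~\ref{prop-tildev}(c), $\|h\|_\theta\le K_2\|v\|_{C^\eta}$, and $c:=\int_{\bar\Delta}h\,d\bar\mu_\Delta=\int_{\bar\Delta}\bar\phi_m\,d\bar\mu_\Delta$ has $|c|\le|v|_\infty$. Then $L^{2m}\bar\phi_m=c+L^m(h-c)$ where $h-c$ is mean zero and $d_\theta$-Lipschitz with $\|h-c\|_\theta\le K_2\|v\|_{C^\eta}+|v|_\infty\le 2K_2\|v\|_{C^\eta}$ (as $K_2\ge1$), so the hypothesis gives $|L^m(h-c)|_1\le 2K_2 b_m\|v\|_{C^\eta}$ and hence
$\bigl|\int_{\bar\Delta}L^m(h-c)\,\bar\psi_{n-m}\,d\bar\mu_\Delta\bigr|\le 2K_2 b_m\|v\|_{C^\eta}|w|_\infty$.
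The remaining term is $c\int_{\bar\Delta}\bar\psi_{n-m}\,d\bar\mu_\Delta=\bigl(\int_{\bar\Delta}\bar\phi_m\,d\bar\mu_\Delta\bigr)\bigl(\int_{\bar\Delta}\bar\psi_{n-m}\,d\bar\mu_\Delta\bigr)$; using $\int_{\bar\Delta}\bar\phi_m\,d\bar\mu_\Delta=\int_\Delta\tilde\phi_m\,d\mu_\Delta$, $f$-invariance, and Proposition~\ref{prop-tildev}(b) once more (and similarly for $\psi$), this differs from $\int_\Delta\phi\,d\mu_\Delta\int_\Delta\psi\,d\mu_\Delta$ by a quantity controlled by $2^\eta K_0^\eta(\|v\|_{C^\eta}|w|_\infty+|v|_\infty\|w\|_{C^\eta})|\rho^{\kappa_m}|_1$. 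Collecting all the error terms (each of the form $\|v\|_{C^\eta}\|w\|_{C^\eta}$ times a constant multiple of $|\rho^{\kappa_{[n/2]}}|_1$) together with the $2K_2 b_{[n/2]}$ contribution yields the estimate.

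The main obstacle is conceptual rather than computational: $\phi=v\circ\pi$ and $\psi=w\circ\pi$ do not factor through $\bar\Delta$, and the fibrewise conditional expectation of $\phi$ along stable leaves need not be $d_\theta$-Lipschitz, so the mixing hypothesis on $\bar\Delta$ cannot be applied to it directly. Proposition~\ref{prop-tildev} is precisely the device circumventing this: it supplies Lipschitz representatives $\bar\phi_m$ whose $L^m$-images have controlled $C^\theta$-norm, the cost being the explicit $\rho^{\kappa_m}$-errors. The real work is therefore bookkeeping --- getting the iterate count right (about $m$ iterates to ``regularise'' an observable via $L^m\bar\phi_m$ and a further $m$ to mix, which is what produces the rate $b_{[n/2]}$) and checking that every approximation error is of the advertised form.
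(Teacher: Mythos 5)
Your overall reduction --- lift to $\Delta$, regularise via Proposition~\ref{prop-tildev}, descend to $\bar\Delta$, apply the $b_n$ hypothesis --- is exactly the paper's strategy, and your bookkeeping through the $b_{[n/2]}$ term is sound (in particular $\|L^m\bar\phi_m - c\|_\theta\le 2K_2\|v\|_{C^\eta}$, $|L^m(h-c)|_1\le 2K_2 b_m\|v\|_{C^\eta}$). But, as you yourself flag, you do not land on the stated coefficient $2^\eta K_0^\eta$. Regularising $\phi$ at time $m=[n/2]$ incurs an $O(|\rho^{\kappa_m}|_1)$ error of its own, the $\psi$-replacement incurs another, and the mismatch between $\bigl(\int\tilde\phi_m\,d\mu_\Delta\bigr)\bigl(\int\tilde\psi_{n-m}\,d\mu_\Delta\bigr)$ and the true product of means incurs one or two more; even after the clean simplification of assuming WLOG $\int v\,d\mu=0$ (which you should do --- it makes $I_4$ directly $O(|\rho^{\kappa_m}|_1)$ instead of a difference of products), you are left with a coefficient roughly $3\cdot 2^\eta K_0^\eta$. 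So what you have proved is the Corollary with a worse constant, not the Corollary.

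The missing idea is to decouple the two regularisation times. Introduce a free parameter $\ell\ge1$ and regularise $\phi$ at time $\ell$, not at time $[n/2]$: write $\int\phi\,\psi\circ f^n\,d\mu_\Delta=\int\phi\circ f^\ell\,\psi\circ f^{\ell+n}\,d\mu_\Delta$, replace $\phi\circ f^\ell$ by $\tilde\phi_\ell$ (error $O(|\rho^{\kappa_\ell}|_1)$), replace $\psi\circ f^{n/2}$ by $\tilde\psi_{n/2}$ (error $2^\eta K_0^\eta\|v\|_{C^\eta}\|w\|_{C^\eta}|\rho^{\kappa_{n/2}}|_1$), decorrelate on $\bar\Delta$ to get the $2K_2 b_{n/2}$ term, and control the leftover $\int\tilde\phi_\ell\,d\mu_\Delta\int\tilde\psi_{n/2}\,d\mu_\Delta$, which is again $O(|\rho^{\kappa_\ell}|_1)$ under the mean-zero reduction. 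Since $\kappa_\ell\to\infty$ $\mu_\Delta$-a.e.\ and $\rho<1$, dominated convergence gives $|\rho^{\kappa_\ell}|_1\to 0$; letting $\ell\to\infty$ annihilates the two $\ell$-dependent errors and leaves exactly $2^\eta K_0^\eta|\rho^{\kappa_{[n/2]}}|_1+2K_2 b_{[n/2]}$. With this one change your argument matches the statement; everything else you wrote is correct.
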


\begin{proof}
Suppose without loss that $v$ is mean zero.
Since $\pi:\Delta\to M$ is a semiconjugacy and $\mu=\pi_*\mu_\Delta$,
it is equivalent to estimate
$\int_\Delta \phi\,\psi\circ f^n\,d\mu_\Delta$,
where $\phi, \psi \colon \Delta \to \R$,
$\phi = v \circ \pi$ and $\psi = w \circ \pi$.
Assume for simplicity that $n$ is even; the proof for $n$ odd
requires little modification.
Let $\ell\ge1$, and write
\begin{align*}
\int_{\Delta}\phi\,\psi\circ f^n\,d\mu_\Delta
=\int_{\Delta}\phi\circ f^\ell\,\psi\circ f^{\ell+n}\,d\mu_\Delta
 = I_1+I_2+I_3+I_4,
\end{align*}
where
\begin{align*}
I_1 & =\int_{\Delta}(\phi\circ f^\ell-\tilde \phi_\ell)\,\psi\circ f^{\ell+n}\,d\mu_\Delta,
\quad 
I_2 =\int_{\Delta}\tilde \phi_\ell\,(\psi\circ f^{n/2}-\tilde \psi_{n/2})\circ f^{\ell+n/2}\,d\mu_\Delta, \\
I_3 & =\int_{\Delta}\Big(\tilde \phi_\ell-\int_\Delta\tilde\phi_\ell\,d\mu_\Delta\Big)\,\tilde \psi_{n/2}\circ f^{\ell+n/2}\,d\mu_\Delta,
\quad 
I_4 =\int_{\Delta}\tilde \phi_\ell\,d\mu_\Delta\int_\Delta\tilde \psi_{n/2}\,d\mu_\Delta.
\end{align*}

By Proposition~\ref{prop-tildev}(b), 
$|I_1|\le|\phi\circ f^\ell-\tilde \phi_\ell|_1 |\psi|_\infty
\le 2^\eta K_0^\eta |\rho^{\kappa_\ell}|_1\|v\|_{C^\eta}|w|_\infty$.
By Proposition~\ref{prop-tildev}(a,b), 
$|I_2|\le|\tilde \phi_\ell|_\infty |\psi\circ f^{n/2}-\tilde \psi_{n/2}|_1
\le 2^\eta K_0^\eta |v|_\infty \|w\|_{C^\eta}|\rho^{\kappa_{n/2}}|_1$.
By Proposition~\ref{prop-tildev}(c),
\begin{align*}
|I_3|
& =\Bigl|\int_{\bar\Delta}L^{n/2}\Big(L^{\ell}\bar \phi_\ell-\int_{\bar\Delta}\bar\phi_\ell\,d\bar\mu_\Delta\Big)\,\bar \psi_{n/2}\,d\bar\mu_\Delta\Bigr| \\
 & \le |L^{n/2}( L^\ell\bar \phi_\ell-{\SMALL\int}_{\bar\Delta}\bar\phi_\ell\,d\bar\mu_\Delta)|_1 |\bar \psi_{n/2}|_\infty
\le 2 b_{n/2} \|L^\ell \bar \phi_\ell\|_\theta |w|_\infty
\le 2 K_2b_{n/2}\|v\|_{C^\eta} |w|_\infty.
\end{align*}
Finally,
$|I_4|\le |\int_{\bar\Delta}\tilde \phi_\ell\,d\bar\mu_{\Delta}| |w|_\infty=
|\int_{\bar\Delta}(\tilde \phi_\ell-\phi\circ f^\ell)\,d\bar\mu_{\Delta}||w|_\infty
\le 2^\eta K_0^\eta \|v\|_{C^\eta}|w|_\infty |\rho^{\kappa_\ell}|_1$ by
another application of Proposition~\ref{prop-tildev}(b).

Altogether, 
\[
  \Bigl|
    \int_{\Delta}\phi\,\psi\circ f^n\,d\mu_\Delta
  \Bigr|
  \le \bigl(
    2^\eta K_0^\eta|\rho^{\kappa_{n/2}}|_1
    + 2 K_2\,b_{n/2}
    + 2^{\eta + 1} K_0^\eta|\rho^{\kappa_\ell}|_1
  \bigr) \|v\|_{C^\eta}\|w\|_{C^\eta}
    .
\]
Letting $\ell\to\infty$ yields the result.
\end{proof}

By Corollary~\ref{cor-NUH} and Theorem~\ref{thm-NUE}, it remains to estimate
$|\rho^{\kappa_n}|_1$. A first step towards this is:

\begin{lemma} \label{lem-tilde}
  $
    \int_{\bar\Delta}\rho^{\kappa_n}\,d\bar\mu_\Delta 
    \le 2 \bar{\tau}^{-1} \sum_{j>n/3}\bar\mu_Y(\tau \geq j)
      + n\sum_{k=0}^\infty \rho^{k+1}\bar\mu_Y(\tau_{k}\ge n/3)
  $,
  where $\tau_k = \sum_{j=0}^{k-1} \tau \circ \bar{F}^k$.
\end{lemma}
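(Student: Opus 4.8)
The plan is to push the integral down onto the base $\bar Y$ and then exploit the self-similar structure of the tower. Summing the defining relation $\bar\mu_\Delta(A\times\{\ell\})=\bar\tau^{-1}\bar\mu_Y(A)$ over $\ell$ gives
\[
\int_{\bar\Delta}\rho^{\kappa_n}\,d\bar\mu_\Delta
=\bar\tau^{-1}\int_{\bar Y}\sum_{\ell=0}^{\tau(y)-1}\rho^{\kappa_n(y,\ell)}\,d\bar\mu_Y(y).
\]
For fixed $y$ the point $(y,\ell)$ first hits $\bar\Delta_0$ after exactly $r:=\tau(y)-\ell$ steps, so $\kappa_n(y,\ell)=0$ if $r>n$, while $\kappa_n(y,\ell)=1+\kappa_{n-r}(\bar Fy,0)$ if $r\le n$. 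Reindexing the inner sum by $r$ then yields
\[
\sum_{\ell=0}^{\tau(y)-1}\rho^{\kappa_n(y,\ell)}
=(\tau(y)-n)^{+}+\rho\sum_{r=1}^{\min\{n,\tau(y)\}}\rho^{\kappa_{n-r}(\bar Fy,0)} .
\]

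Integrating the first summand gives $\bar\tau^{-1}\int_{\bar Y}(\tau-n)^{+}\,d\bar\mu_Y=\bar\tau^{-1}\sum_{j>n}\bar\mu_Y(\tau\ge j)\le\bar\tau^{-1}\sum_{j>n/3}\bar\mu_Y(\tau\ge j)$. For the second summand I would split the sum over $r$ at the cutoff $n/3$. Estimating each term with $r>n/3$ trivially by $1$, those terms contribute at most $\bar\tau^{-1}\rho\int_{\bar Y}(\tau-\lfloor n/3\rfloor)^{+}\,d\bar\mu_Y\le\bar\tau^{-1}\sum_{j>n/3}\bar\mu_Y(\tau\ge j)$, and together with the first summand this accounts for the coefficient $2\bar\tau^{-1}$. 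For the terms with $1\le r\le n/3$ I would drop the constraint $\tau(y)\ge r$ (legitimate, since the integrand is nonnegative and only an upper bound is wanted) and apply $\bar F$-invariance of $\bar\mu_Y$ to replace $\int_{\bar Y}\rho^{\kappa_{n-r}(\bar Fy,0)}\,d\bar\mu_Y(y)$ by $\int_{\bar Y}\rho^{\kappa_{m}(w,0)}\,d\bar\mu_Y(w)$ with $m=n-r\ge 2n/3$. Since $\kappa_m(w,0)=k$ exactly when $\tau_k(w)\le m<\tau_{k+1}(w)$, this integral equals $\sum_{k\ge 0}\rho^k\bar\mu_Y(\tau_k\le m<\tau_{k+1})$, which is at most $\sum_{k\ge0}\rho^k\bar\mu_Y(\tau_{k+1}\ge m+1)\le\sum_{k\ge0}\rho^k\bar\mu_Y(\tau_{k+1}\ge n/3)$ because $m+1\ge\lceil n/3\rceil$. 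Summing over the (at most $n$) relevant values of $r$ and relabelling the summation index then produces the second term on the right-hand side, namely $n\sum_k\rho^{k+1}\bar\mu_Y(\tau_k\ge n/3)$ up to the precise power of $\rho$, which one tracks through the estimate.

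The arithmetic around the cutoff $n/3$ (floors and ceilings, and converting between heights $\ell$ and first-return times $r$) is routine. The one place that deserves care is the use of $\bar F$-invariance: since $\bar F$ need not be invertible, $\int_{\bar Y}\rho^{\kappa_{n-r}(\bar Fy,0)}\,d\bar\mu_Y(y)=\int_{\bar Y}\rho^{\kappa_{n-r}(w,0)}\,d\bar\mu_Y(w)$ is the identity $\int g\circ\bar F\,d\bar\mu_Y=\int g\,d\bar\mu_Y$ rather than a genuine change of variables, so the range restriction on $r$ must be discarded \emph{before} this step, not after. I expect this, together with keeping the cutoff bookkeeping consistent so that the $2\bar\tau^{-1}$ and the $n$ emerge cleanly, to be the only real point; everything else is direct computation.
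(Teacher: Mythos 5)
Your approach is genuinely different from the paper's. You push the integral down to $\bar Y$ via $\bar\mu_\Delta(A\times\{\ell\})=\bar\tau^{-1}\bar\mu_Y(A)$, peel off the first return by the renewal identity $\kappa_n(y,\ell)=1+\kappa_{n-r}(\bar Fy,0)$ for $r=\tau(y)-\ell\le n$, and then use $\bar F$-invariance; the paper instead writes $\int\rho^{\kappa_n}\,d\bar\mu_\Delta=\sum_{k\ge0}\rho^k\bar\mu_\Delta(\kappa_n=k)$ and decomposes each level set $\{\kappa_n=k\}$ jointly by the \emph{first} return time $r(p)$ and the \emph{last} return time $s(p)$ in $\{1,\dots,n\}$. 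Your bookkeeping for the $2\bar\tau^{-1}\sum_{j>n/3}\bar\mu_Y(\tau\ge j)$ term is fine (the $\{\kappa_n=0\}$ piece and the $\{r>n/3\}$ piece each contribute one copy), and your caution about applying $\int g\circ\bar F\,d\bar\mu_Y=\int g\,d\bar\mu_Y$ only after discarding the range restriction is exactly right.

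The gap is in the second term, and it is not a rounding or relabelling issue you can ``track through''. Because you only record the first return, the best you can extract from $\kappa_m(w,0)=k$ with $m=n-r\ge 2n/3$ is $\tau_{k+1}(w)>m\ge n/3$. Carrying this through, your second term becomes $\bar\tau^{-1}\,n\sum_{j\ge1}\rho^{j}\bar\mu_Y(\tau_{j}\ge n/3)$, whereas the lemma asserts $n\sum_{k\ge0}\rho^{k+1}\bar\mu_Y(\tau_{k}\ge n/3)=n\sum_{j\ge1}\rho^{j}\bar\mu_Y(\tau_{j-1}\ge n/3)$. Since $\tau_j\ge\tau_{j-1}$, your summand dominates the lemma's termwise, so your bound is strictly weaker (by a factor of $\rho^{-1}$ after reindexing, and $\bar\tau^{-1}\le1$ does not rescue it since $\rho$ has no lower bound). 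The paper avoids this by conditioning \emph{also} on a late last return $s\ge 2n/3$: on $\{r\le n/3,\ s\ge 2n/3\}$ there are exactly $k-1$ further returns of the first-return image $w$ in the window $(0,s-r]$ of length $\ge n/3$, with a return at the endpoint, i.e.\ $\tau_{k-1}(w)=s-r\ge n/3$; the complementary event $\{\kappa_n\ge1,\ n-s>n/3\}$ is what accounts for the second $\bar\tau^{-1}\sum_{j>n/3}$. Your weaker estimate would actually suffice for the downstream use in Theorem~\ref{thm-NUH}, but as written it does not prove the lemma; to recover the stated $\rho^{k+1}$ you need to cut on the last return as well.
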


\begin{proof}
First write $\int_{\bar\Delta}\rho^{\kappa_n}\,d\bar\mu_\Delta=\sum_{k=0}^\infty \rho^k \bar\mu_\Delta(\kappa_n=k)$.
Note that $\kappa_n(p)=0$ if and only if $f^j(p)\not\in \bar{\Delta}_0$ for all $j=1,\dots,n$,
so $\bar\mu_\Delta(\kappa_n=0)=\bar\tau^{-1}\sum_{j\geq n}\bar\mu_Y(\tau>j)$.

When $\kappa_n(p)\ge1$, we can define $r(p)=\min\{j\in\{1,\dots,n\}: f^jp\in \bar{\Delta}_0\}$
and $s(p)=\max\{j\in\{1,\dots,n\}: f^jp\in \bar{\Delta}_0\}$.
Hence for $k\ge1$,
\[
  \{\kappa_n(p)=k\}=\bigcup_{1\le r\le s\le n}
  \{\kappa_n(p)=k,\, r(p)=r,\,s(p)=s\}.
\]
It is easy to check that 
$\bar \mu_\Delta\{r(p)=j\} = \bar\tau^{-1}\bar\mu_Y(\tau \geq j)$, so 
\[
  \bar\mu_\Delta(\kappa_n(p)=k)
  \le \bar\tau^{-1}\sum_{j>n/3}\bar\mu_Y(\tau \geq j)+ b_{n,k},
\]
where
\begin{align*}
b_{n,k} &=
\sum_{0\le r\le n/3}\sum_{2n/3\le s\le n}\bar\mu_\Delta(\kappa_n(p)=k,\,r(p)=r,\,s(p)=s)
\\ & =
\sum_{0\le r\le n/3}\sum_{2n/3\le s\le n}\bar\mu_\Delta(\kappa_{s-r}(f^rp)=k - 1,\,r(p)=r,\,s(p)=s)
\\ & \le 
\sum_{0\le r\le n/3}\sum_{2n/3\le s\le n}\bar\mu_\Delta(\kappa_{s-r}(f^rp)=k - 1,\,f^rp\in\bar{\Delta}_0,\,f^sp\in\bar{\Delta}_0)
\\ & =
\sum_{0\le r\le n/3}\sum_{2n/3\le s\le n}\bar\mu_\Delta(\kappa_{s-r}(p)=k - 1,\,p\in\bar{\Delta}_0,\,f^{s-r}p\in\bar{\Delta}_0)
\\ & \le 
n\sum_{j\ge n/3} \bar\mu_\Delta(\kappa_j(p)=k - 1,\,p\in\bar{\Delta}_0,\,f^jp\in\bar{\Delta}_0)
\\ & =
n\bar\tau^{-1}\sum_{j\ge n/3} \bar\mu_Y(y\in\bar Y,\,f^jy\in\bar Y,\,\tau_{k-1}(y)=j)
 \le 
n\bar\tau^{-1}\bar\mu_Y(y\in\bar Y:\tau_{k-1}(y)\ge n/3).
\end{align*}
This completes the proof.
\end{proof}

\begin{pfof}{Theorem~\ref{thm-NUH}}
We restrict from now on to the cases of polynomial tails and stretched exponential tails.  The sum $\sum_{j\ge n}\bar\mu_Y(\tau>j)$ is estimated in the same way
as $\bP(w_1\ge n)$ in the proofs of Propositions~\ref{prop-poly} and~\ref{prop-aexp},
so it remains to show that
$n\sum_{k=0}^\infty \rho^k\bar\mu_Y(\tau_k\ge n)$ satisfies the required estimate.

In the case of polynomial tails, 
$\bar\mu_Y(\tau_k\ge n)\le k\bar\mu_Y(\tau\ge n/k)\le C_\tau  k^{\beta+1}n^{-\beta}$,
and so
$n\sum_{k=1}^\infty \rho^k\bar\mu_Y(\tau_k\ge n)\le C_2n^{-(\beta-1)}$ where
$C_2=C_\tau \sum_{k=1}^\infty \rho^k k^{\beta+1}$.

It remains to treat the stretched exponential case.
Writing $X_j=\tau\circ F_j$, 
\begin{align*}
 \bar\mu_Y(X_0=j_0,\dots  ,X_k=j_k)
 & = \int_{\bar Y} 1_{\{\tau\circ F^k=j_k\}}1_{\{X_0=j_0,\dots,X_{k-1}=j_{k-1}\}}\,d\bar\mu_Y
 \\ & =\int_{\bar Y} 1_{\{\tau=j_k\}}P^k1_{\{X_0=j_0,\dots,X_{k-1}=j_{k-1}\}}\,d\bar\mu_Y
 \\ & \le \bar\mu_Y(\tau=j_k)|P^k1_{\{X_0=j_0,\dots,X_{k-1}=j_{k-1}\}}|_\infty
\end{align*}
By~Proposition~\ref{prop-zetan}(a),
\begin{align*}
  (P^k1_{\{X_0=j_0,\dots  ,X_{k-1}=j_{k-1}\}} & )(y)=
  \sum_{a\in\alpha_k}\zeta_k(y_a)1_{\{X_0=j_0,\dots,X_{k-1}=j_{k-1}\}}
  \\ & \le K_1\sum_{a\in\alpha_k}\bar\mu_Y(a)1_{\{\tau(a)=j_0,\dots,\tau(F^{k-1}a)=j_{k-1}\}}
  \\ & = K_1\bar{\mu}_Y(\tau=j_0,\dots,\tau\circ F^{k-1}=j_{k-1}).
\end{align*}
Hence
\[
  \bar{\mu}_Y(X_0=j_0,\dots,X_k=j_k)
  \le K_1 \bar{\mu}_Y(\tau=j_k) \bar{\mu}_Y(X_0=j_0,\dots,X_{k-1}=j_{k-1}),
\]
and so
\[
  \bar{\mu}_Y(X_k\ge n\,|\, X_0=j_0,\dots,X_{k-1}=j_{k-1})
  \le K_1 \bar{\mu}_Y(\tau\ge n)
  \le K_1 C_\tau e^{-An^\gamma}.
\]
By Proposition~\ref{prop-aexp}, there exists $B\in(0,A)$ and $C_B\in(0,\rho)$
depending continuously on $C_\tau$, $\gamma$ and $A$ such that
\[
  \mu_Y(\tau_k\ge n)
  = \mu_Y(X_0+\dots + X_{k-1}\ge n)\le C_B^k e^{-B n^\gamma},
\]
Hence $\sum_{k=1}^\infty \rho^k\mu_Y(\tau_k\ge n)
\le \{\sum_{k=1}^\infty (\rho C_B)^k\} e^{-B n^\gamma}$
as required.
\end{pfof}

\paragraph{Acknowledgements}
This research was supported in part by a
European Advanced Grant {\em StochExtHomog} (ERC AdG 320977).

\end{document}